\def\build#1_#2^#3{\mathrel{\mathop{\kern 0pt#1}\limits_{#2}^{#3}}}
\def\noi{{\noindent}}
\def\cq{$\hfill \square$}
\def\un{{\bf 1}}
\newcommand{\bbD}{\mathbb{D}}
\newcommand{\bD}{\mathbb{D}}
\newcommand{\bbE}{\mathbb{E}}
\newcommand{\bE}{{\bf E}}
\newcommand{\bm}{{\bf m}}
\newcommand{\bN}{\mathbb{N}}
\newcommand{\bbP}{\mathbb{P}}
\newcommand{\bQ}{\mathbb{Q}}
\newcommand{\bP}{{\bf P}}
\newcommand{\bR}{\mathbb{R}}
\newcommand{\R}{\mathbb{R}}
\newcommand{\bbT}{\mathbb{T}}
\newcommand{\cI}{\mathcal{I}}
\newcommand{\cF}{\mathcal{F}}
\newcommand{\cG}{\mathcal{G}}
\newcommand{\cH}{\mathcal{H}}
\newcommand{\cM}{\mathcal{M}}
\newcommand{\cN}{\mathcal{N}}
\newcommand{\cP}{\mathcal{P}}
\newcommand{\cT}{\mathcal{T}}
\def\cJ{{\cal J}}
\def\cD{{\cal D}}
\def\bn{{\rm n}}
\def\varep{\varepsilon}
\def\be{\begin{equation}}
\def\ee{\end{equation}}
\def\ba{\begin{eqnarray*}}
\def\ea{\end{eqnarray*}}
\def\noi{\noindent}
\newcommand{\lgeo}{[\![}
\newcommand{\rgeo}{]\!]}
\def\qed{$\hfill \square$}
\def\btau{{\boldsymbol{\tau}}}
\def\inject{{\rm  j}}
\newtheorem{theorem}{Theorem}[section]
\newtheorem{lemma}[theorem]{Lemma}
\newtheorem{proposition}[theorem]{Proposition}
\newtheorem{definition}{Definition}[section]}
\newtheorem{remark}{Remark}[section]}
\begin{document}

\title{ {\bf Packing and Hausdorff measures of stable trees.}}
\author{Thomas {\sc Duquesne}  
\thanks{Laboratoire de Probabilit\'es et Mod\`eles Al\'eatoires; 
Universit\'e Paris 6, Bo\^ite courrier 188, 4 place Jussieu, 75252 Paris Cedex 05, FRANCE. Email: thomas.duquesne@upmc.fr . This work benefited from the partial support of ANR-BLAN A3
} }

\vspace{2mm}
\date{\today} 

\maketitle

\begin{abstract} 
In this paper we discuss Hausdorff and packing measures of  random continuous trees called stable trees. Stable trees form a specific class of L\'evy trees (introduced by Le Gall and Le Jan in \cite{LGLJ1}) that contains Aldous's continuum random tree which corresponds to the Brownian case. We provide results for the whole stable trees and for their level sets that are the sets of points situated at a given distance from the root. We first show that there is no exact packing measure for levels sets. We also prove that non-Brownian stable trees and their level sets have no exact Hausdorff measure with regularly varying gauge function, which continues  previous results from \cite{DuLG3}.

\noindent 
{\bf AMS 2000 subject classifications}: Primary 60G57, 60J80. Secondary 28A78. \\
 \noindent   
{\bf Keywords}: {\it L\'evy trees, stable trees, mass measure, local time measure, Hausdorff measure, packing measure.}
\end{abstract}

\section{Introduction}
\label{introsec}
  Stable trees are particular instances of L\'evy trees that form a class of random compact metric spaces introduced by Le Gall and Le Jan in \cite{LGLJ1} as the genealogy of Continuous State Branching Processes (CSBP for short). The class of stable trees contains Aldous's continuum random tree that corresponds to the Brownian case (see \cite{Al1, Al2}). Stable trees (and more generally L\'evy trees) are the scaling limit of Galton-Watson trees (see \cite{DuLG} Chapter 2 and \cite{Du2}). Various geometric and distributional properties of L\'evy trees (and of stable trees, consequently) have been studied in \cite{DuLG2} and in Weill \cite{Weill}. An alternative construction of L\'evy trees is discussed in \cite{DuWi1}. Stable trees have been also studied in connection with fragmentation processes: see Miermont \cite{Mier03, Mier05}, Haas and Miermont \cite{HaaMi}, Goldschmidt and Haas \cite{GolHaa} for the stable cases and see Abraham and Delmas \cite{AbDel08} for related models concerning more general L\'evy trees.

   Fractal properties of stable trees have been discussed in \cite{DuLG2} and \cite{DuLG3}: Hausdorff and packing dimensions of stable trees are computed in \cite{DuLG2} and the exact Hausdorff measure of Aldous' continuum random tree is given in \cite{DuLG3}. The same paper contains partial results for the non-Brownian stable trees that suggest there is no exact Hausdorff measure in these cases. In this paper we prove there is no exact packing measure for the level sets of stable trees (including the Brownian case) and we also prove that there is no exact Hausdorff measure with regularly varying gauge function for the non-Brownian stable trees and their level sets. 
  
\medskip

      Before stating the main results of the paper, let us recall the definition of stable CSBPs and the definition of stable trees that represent the genealogy of stable CSBPs. CSBPs are time- and space-continuous analogues of Galton-Watson Markov chains. They have been introduced by Jirina \cite{Ji} and Lamperti \cite{La2} as the 
$[0, \infty]$-valued Feller processes that are absorbed in states $\{0\}$ and $\{ \infty\}$ and whose kernel semi-group $(p_t (x,dy); x \in [0, \infty], t \in [0, \infty))$ enjoys the branching property: $p_t (x, \cdot )*p_t (x^\prime,\cdot ) = p_t(x+ x^\prime , \cdot)$, for every $x,x^\prime\in [0, \infty]$ and every $t \in [0, \infty)$. As pointed out in Lamperti \cite{La2}, CSBPs are time-changed spectrally positive L\'evy  processes. Namely, let $Y= (Y_t, t \geq 0)$ be a L\'evy process starting at $0$ that is defined on a probability space $(\Omega, \cF, \bP)$ and that has no positive jump. Let $x \in (0, \infty)$. Set $A_t= \inf \{ s \geq 0: \; \int_0^s du/ (Y_u +x) >t \}$ for any $t \geq 0$, and $T_{x}= \inf \{ s \geq 0 : \; Y_s= -x \}$, with the convention that $\inf \emptyset = \infty$. Next set $Z_t= X_{A_t \wedge T_{x}}$ if $ A_t \wedge T_{x} $ is finite and set $Z_t= \infty$ if not. Then, $Z= (Z_t, t \geq 0)$ is a CSBP with initial state $x$ (see Helland \cite{He} for a proof in the conservative cases). Recall that the distribution of $Y$ is characterized by its Laplace exponent $\psi$ given by $\bE[\exp(-\lambda Y_t)]=\exp(t\psi(\lambda))$, $t,\lambda\geq 0$ (see Bertoin \cite{Be} Chapter 7). Consequently, the law of the CSBP $Z$ is also characterised by $\psi$ and it is called its {\it branching mechanism}. 

We shall restrict to {\it $\gamma$-stable CSBPs} for which $\psi (\lambda)= \lambda^\gamma$, $ \lambda \geq 0$, where $\gamma \in (1, 2]$. The case $\gamma= 2$ shall be refered as to the {\it Brownian case} (and the corresponding CSBP is the Feller diffusion) and the cases 
$1< \gamma < 2$ shall be refered as to the 
{\it non-Brownian stable cases}. Let $Z$ be a $\gamma$-stable CSBP defined on 
$(\Omega, \cF, \bP)$. As a consequence of a result due to Silverstein  \cite{Sil68}, the kernel 
semigroup of $Z$ is characterised as follows: for any $\lambda, s,t \geq 0$, one has 
$\bE [ \exp (-\lambda Z_{t+s} ) | Z_s]= \exp (-Z_s u(t, \lambda))$, where $u(t,\lambda)$ is the 
unique nonnegative solution of $\partial u (t , \lambda) /
 \partial t =-u(t, \lambda)^\gamma $ and $u(0, \lambda)= \lambda$. This ordinary differential equation can be explicitly solved as follows. 
\begin{equation}
\label{explisolbranch} 
u(t, \lambda)= \Big( (\gamma\! -\!1) t  +\frac{_1}{^{\lambda^{\gamma -1}}}  \Big)^{-\frac{1}{\gamma-1}}\, , \quad  t, \lambda \geq 0 \; .
\end{equation}
It is easy to deduce from this formula that $\gamma$-stable CSBPs get almost surely extinct in finite time with probability one: $\bP (\exists t \geq 0: \; Z_t= 0)= 1$. We refer to Bingham \cite{Bi2} for more details on CSBPs.

L\'evy trees have been introduced by Le Gall and Le Jan in \cite{LGLJ1} via a coding function called the {\it height process} whose definition is recalled in Section \ref{Levytreesec}. Let us briefly recall the formalism discussed in \cite{DuLG2} where L\'evy trees are viewed as random variables taking values in 
the space of all compact rooted $\bR$-trees.
Informally, a {\it $\bR$-tree} is a metric space $(\cT,d)$ such that for any two points
$\sigma$ and $\sigma^\prime $
in $\cT$ there is a unique arc with endpoints $\sigma$ and $\sigma^\prime$ and this arc is isometric to a compact interval of the real line. A {\it rooted $\bR$-tree} is a $\bR$-tree with a distinguished point that we denote by $\rho$ and that we call the root. We say that two rooted $\bR$-trees are {\it equivalent} if there is a
root-preserving isometry that maps one onto the other. Instead of considering all compact rooted $\bR$-trees, we introduce the set $\bbT$ of equivalence classes of compact rooted
$\R$-trees. Evans, Pitman and Winter in \cite{EvPitWin} noticed that $\bbT$ equipped
with the Gromov-Hausdorff distance \cite{Gro}, is a Polish space (see Section \ref{Levytreesec} for more details).

With any stable exponent $\gamma \in (1, 2]$ one can associate a sigma-finite measure $\Theta_\gamma$ on $\bbT$  called the "law'' of the $\gamma$-stable tree. Although $\Theta_\gamma$ is an infinite measure, one can prove the following: Define $\Gamma (\cT)= \sup_{\sigma \in \cT} d(\rho, \sigma)$ that is the {\it total height of} $\cT$. Then, for any $a \in (0, \infty)$, one has 
$$\Theta_\gamma ( \Gamma (\cT)> a )=  \left( (\gamma\! -\!1) a    \right)^{-\frac{1}{\gamma-1}}\, $$
Stable trees enjoy the so-called {\it branching property}, that obviously holds true for Galton-Watson trees. More precisely, for every $a>0$, under the probability measure $\Theta_\gamma (\, \cdot \, | \, \Gamma (\cT)>a)$ and conditionally given the part of $\cT$ below level $a$, the subtrees above level $a$ are distributed as the atoms of a Poisson point measure whose intensity is a random multiple of $\Theta_\gamma $, and the random factor is the total mass of the $a$-local time measure that is defined below (see Section \ref{Levytreesec} for a precise definition). It is important to mention that Weill in \cite{Weill} proves that the branching property characterizes L\'evy trees, and therefore stable trees.

  We now define $\Theta_\gamma$ by an approximation with Galton-Watson trees as follows. Let $\xi $ be a probability distribution on the set of nonnegative integers $\bN$. We first assume that  $\sum_{^{k\geq 0}} k\xi (k) = 1$ and  that $\xi$ {\it is in the domain of attraction of a $\gamma$-stable distribution}. More precisely, let $Y_1$ be a random variable such that $\log \bE [\exp(-\lambda Y_1) ]= \lambda^\gamma$, for any $\lambda \in [0, \infty)$. Let  $(J_k, k \geq 0)$ be an i.i.d.$\;$sequence of r.v.$\;$with law $\xi$. We assume there exists an increasing sequence $(a_p, p \geq 0)$
of positive integers such that $(a_p)^{-1}(J_1 +\cdots+J_p -p)$ converges  in distribution to $Y_1$. Denote by $\btau$
a Galton-Watson tree with offspring distribution $\xi$ that can be viewed as a random rooted 
$\R$-tree $(\btau, \delta  , \rho)$ by affecting length $1$ to each edge. Thus, $(\btau, \frac{_1}{^p}\delta  , \rho)$ is the tree $\btau$ whose edges are rescaled by a factor $1/p$ and we simply denote it by $ \frac{_1}{^p} \btau$. Then, for any $a \in (0 , \infty)$, {\it the law of $ \frac{_1}{^p} \btau$ under $\bP (\, \cdot \, | \, \frac{_1}{^p}\Gamma (\btau)  > a)$ converge weakly in $\bbT$ to the probability distribution $\Theta_\gamma (\, \cdot \, | \, \Gamma (\cT)  > a)$, when $p$ goes to $\infty$}. This result is Theorem 4.1 \cite{DuLG2}.

 Let us introduce two important kinds of measures defined 
on  $\gamma$-stable trees. Let $(\cT, d , \rho)$ be a $\gamma$-stable tree. For every $a>0$, we define the {\it $a$-level set $\cT(a)$ of $\cT$} as the set of points that are at distance $a$ from the root. Namely, 
\begin{equation}
\label{levelsetdef}
\cT(a):=\big\{\sigma\in\cT \, : \; d(\rho,\sigma)=a \; \big\} \; .
\end{equation}
We then define the random measure
$\ell^a$ on $\cT(a)$ in the following way. 
For every $\varepsilon>0$, write
$\cT_\varepsilon(a)$ for the finite subset of $\cT(a)$ consisting of
those vertices that have descendants at level $a+\varepsilon$. Then, $\Theta_\gamma$-a.e.$\;$for every
bounded continuous function $f$ on $\cT$, we have 
\begin{equation}
\label{geolocapprox}
\langle \ell^a,f \rangle=\lim_{\varepsilon\downarrow 0} \; ((\gamma \! - \!1)\varepsilon)^{\frac{1}{\gamma-1}} \!\!\! \sum_{\sigma\in \cT_\varepsilon(a)} \!\!\!  f(\sigma).
\end{equation}
The measure $\ell^a$ is a finite measure on $\cT(a)$ that is called the {\it $a$-local time measure of $\cT$}. We refer to \cite{DuLG2} Section 4.2 for the construction and the main properties of the local time measures $(\ell^a, a \geq 0)$ (see also Section \ref{Levytreesec} for more details). Theorem 4.3 \cite{DuLG2} ensures we can choose a modification of the local time measures $(\ell^a, a \geq 0)$ in such a way that $a \mapsto \ell^a$ is $\Theta_\gamma$-a.e.$\;$cadlag for the weak topology on the space of finite measures on $\cT$.

We next define the {\it mass measure} $\bm$ on the tree $\cT$ by
\begin{equation}
\label{unifmeas}
\bm=\int_0^\infty da\,\ell^a\,.
\end{equation}
The topological support of $\bm$ is $\cT$. Note that the definitions of the local time measures and of the mass measure only involve the metric properties of $\cT$.

   Let us mention that $\gamma$-stable trees enjoy the following scaling property: For any $c \in (0, \infty)$, the "law" of $(\cT , c\, d, \rho)$ under $\Theta_\gamma$ is $c^{_{^{1/(\gamma-1)}}} \Theta_\gamma$. Then, it  is easy to show that for any $a, c \in (0, \infty)$ the law of $c^{_{^{1/(\gamma-1)}}}  \langle \ell^{a/c} \rangle$ under $\Theta_\gamma$ is the law of $\langle \ell^{a} \rangle$
under $c^{_{^{1/(\gamma-1)}}} \Theta_\gamma$ (here, $\langle \ell^b \rangle $ stands for the total mass of the $b$-local time measure). Similarly, the law of $c^{_{^{\gamma/(\gamma-1)}}}  \langle \bm  \rangle$ under $\Theta_\gamma$ is the law of $\langle \bm \rangle$
under $c^{_{^{1/(\gamma-1)}}} \Theta_\gamma$. Since $\ell^a$ and $\bm$ are in some sense the most spread out measures on respectively $\cT(a)$ and $\cT$, these scaling properties give a heuristic explanation for the following results that concern the fractal dimensions of stable tree (see \cite{DuLG2} for a proof): For any $a \in (0, \infty)$, $\Theta_\gamma$-a.e.$\;$on $\{ \cT(a) \neq \emptyset\}$ the Hausdorf and the packing dimensions of $\cT(a)$ are equal to $1/ (\gamma -1)$ and $\Theta_\gamma$-a.e.$\;$the Hausdorf and the packing dimensions of $\cT$ are equal to $\gamma/ (\gamma -1)$. 

In this paper we discuss finer results concerning possible exact Hausdorff and packing measures for stable tree and their level sets. We first state a result concerning the exact packing measure for level sets. To that end, let us briefly recall the definition of packing measures. Packing measures have been introduced by Taylor and Tricot in \cite{TaTr}. Though their construction is done in Euclidian spaces, it easily extends to metric spaces and more specifically to $\gamma$-stable trees. More precisely, for any $\sigma \in \cT$ and any $r \in [0, \infty)$, let us denote by $\bar{B} (\sigma , r)$ (resp. $B(\sigma, r)$) the closed (resp. open) ball of $\cT$ with center $\sigma$ and radius $r$. Let $A \subset \cT$ and $\varepsilon \in (0, \infty )$. A {\it $\varepsilon$-packing of $A$} is a countable collection of  pairwise disjoint closed balls $\bar{B}(x_n , r_n) $,  $n \geq 0$,  such that $x_n \in A$  and $r_n \leq \varepsilon$. We restrict our attention to packing measures associated with a {\it regular gauge function} in the following sense: A function $g: (0, r_0) \rightarrow (0, \infty)$ is a regular gauge function if it is continuous, non decreasing, if $\lim_{0+} g= 0$  and if there exists a constrant $C\in (1, \infty) $ such that \begin{equation}
\label{doubling}
\exists \, C >1 \; : \quad g(2r) \leq C g(r) \;, \quad r \in (0, r_0/2) . 
\end{equation}
Such a property shall be refered as to a {\it $C$-doubling condition}. 
We then set 
\begin{equation}
\label{premeadef}
\cP^*_g (A)= \lim_{\varepsilon \downarrow 0} \; \,   \sup \Big\{ \sum_{^{n \geq 0}}  g(r_n) ;  
\;  (\bar{B}(x_n , r_n) , n \geq 0)\; \varepsilon^{_{_-}}{\rm packing} \; {\rm of} \; A  \Big\} 
\end{equation}
that is the {\it $g$-packing pre-measure of $A$} and we define the $g$-{\it packing outer measure of $A$} as 
\begin{equation}
\label{packdef}
\cP_g (A) = \inf \Big\{ \sum_{^{n \geq 0}} \cP^*_g (E_n) ; \;  A \subset \bigcup_{^{n \geq 0}} E_n  \Big\} \; .
\end{equation}
As in Euclidian spaces, $\cP_g$ is a Borel regular metric outer measure (see Section \ref{packingsec} for more details). The following theorem shows that the level sets of stable trees have no exact packing 
measure, even in the Brownian case. 
\begin{theorem}
\label{packlevel} 
Let $\gamma \in (1, 2]$ and let us consider a $\gamma$-stable tree $(\cT , d , \rho)$ under its excursion measure $\Theta_\gamma$. Let 
$g: (0, 1)\rightarrow (0, \infty)$ be any continuous function such that 
\begin{equation}
\label{levelfoninf}
\lim_{r\rightarrow 0} r^{-\frac{1}{\gamma-1}}g(r)= 0 \; .
\end{equation}
\begin{description}
\item[(i)]  \hspace{-4mm} If $\; \sum_{n \geq 1} \left[2^{\frac{n}{\gamma -1}} g (2^{-n})\right]^{\gamma} < \infty $, then for any $a\in (0, \infty)$, $\Theta_\gamma$-a.e.$\;$on $\{ \cT (a) \neq \emptyset \}$ and for $\ell^a$-almost all $\sigma$, we have 
\begin{equation}
\label{lowdensinfty}
 \liminf_{n \rightarrow \infty} \frac{\ell^a(B(\sigma , 2^{-n}))}{g(2^{-n})} = \infty \; .
\end{equation}
Moreover, if $g$ is a regular gauge function, then $\cP_g (\cT (a) \, ) = 0 $, $\Theta_\gamma$-a.e.
\item[(ii)]  \hspace{-2mm} If $\; \sum_{n \geq 1}  \left[ 2^{\frac{n}{\gamma -1}} g (2^{-n})\right]^{\gamma}  = \infty $, then for any $a\in (0, \infty)$, $\Theta_\gamma$-a.e.$\;$and for $\ell^a$-almost all $\sigma$, we have 
\begin{equation}
\label{lowdensnull}
 \liminf_{n  \rightarrow \infty} \frac{\ell^a(B(\sigma , 2^{-n}))}{g(2^{-n})} = 0\; .
 \end{equation}
Moreover, if $g$ is a regular gauge function, then $\cP_g (\cT (a) \, ) = \infty $, $\Theta_\gamma$-a.e.$\;$on the event $\{ \cT (a) \neq \emptyset \}$.  
\end{description}
\end{theorem}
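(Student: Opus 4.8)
\noi{\bf Plan of the proof.} The strategy is to establish first the density statements \eqref{lowdensinfty} and \eqref{lowdensnull}, and then to deduce the two packing--measure assertions from them via general comparison principles between packing premeasures and lower densities. The two parts are logically independent: the first is probabilistic and rests on the decomposition of $\cT$ along the ancestral line of an $\ell^a$-typical point, while the second is measure-theoretic and exploits the special (laminar) geometry of the balls traced on $\cT(a)$.

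For the densities, fix $a>0$ and work under the measure $\Theta_\gamma(d\cT)\,\ell^a(d\sigma)$; since $\Theta_\gamma(\langle\ell^a\rangle)=1$ (a consequence of \eqref{explisolbranch}), this is a probability measure. A point $\sigma'\in\cT(a)$ belongs to $\bar B(\sigma,2r)$ iff $\sigma\wedge\sigma'$ has height $\ge a-r$, i.e.\ iff $\sigma'$ descends from the ancestor $u$ of $\sigma$ at level $a-r$; hence $R_r:=\ell^a(\bar B(\sigma,2r))=\langle\ell^r_{\cT_u}\rangle$, the total mass of the $r$-local time of the subtree $\cT_u$ hanging at $u$. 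By the branching property, under $\Theta_\gamma(d\cT)\,\ell^a(d\sigma)$ the pair $(\cT_u,\sigma)$ has the law of $(\cT',\sigma')$ under $\Theta_\gamma(d\cT')\,\ell^r_{\cT'}(d\sigma')$ (again a probability measure, as $\Theta_\gamma(\langle\ell^r\rangle)=1$), so $R_r$ has the law of $\langle\ell^r\rangle$ under $\Theta_\gamma$ size-biased by $\langle\ell^r\rangle$; by the scaling property of the local-time measures recalled in the Introduction, $R_r\stackrel{(d)}{=}r^{1/(\gamma-1)}\widehat W$, where $\mathbf P(\widehat W\in dx)=x\,\Theta_\gamma(\langle\ell^1\rangle\in dx)$. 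The crux of the proof is then the small-deviation estimate $\mathbf P(\widehat W\le v)\asymp v^{\gamma}$ as $v\downarrow 0$, equivalently $\Theta_\gamma(0<\langle\ell^1\rangle\le v)\asymp v^{\gamma-1}$: reading $(\langle\ell^t\rangle)_{t\ge0}$ under $\Theta_\gamma$ as the excursion measure of the $\gamma$-stable CSBP, one gets from \eqref{explisolbranch} that $\Theta_\gamma[e^{-\lambda\langle\ell^1\rangle};\langle\ell^1\rangle>0]=u(1,\infty)-u(1,\lambda)\sim c\,\lambda^{-(\gamma-1)}$ as $\lambda\to\infty$; since $0<\gamma-1<1$, a Tauberian theorem gives $\Theta_\gamma(\langle\ell^1\rangle\in dx)$ a density $\asymp x^{\gamma-2}$ near $0$, hence the size-biased density $\asymp x^{\gamma-1}$ and the claim. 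Given this, a Borel--Cantelli argument along $r=2^{-n}$ does the job: $\sum_n\mathbf P(R_{2^{-n}}\le M g(2^{-n}))=\sum_n\mathbf P(\widehat W\le M2^{n/(\gamma-1)}g(2^{-n}))\asymp M^{\gamma}\sum_n[2^{n/(\gamma-1)}g(2^{-n})]^{\gamma}$, using \eqref{levelfoninf} to ensure $2^{n/(\gamma-1)}g(2^{-n})\to0$. When this series converges, the first Borel--Cantelli lemma yields $\liminf_n R_{2^{-n}}/g(2^{-n})\ge M$ for every $M$, hence $=\infty$, $\ell^a$-a.e.; when it diverges one obtains $\liminf_n R_{2^{-n}}/g(2^{-n})=0$, $\ell^a$-a.e., by a conditional Borel--Cantelli argument, using that $(R_{2^{-n}})_n$ is a Markov chain along the ancestral line of $\sigma$ whose one-step conditional laws are, up to the harmless constraint $R_{2^{-n-1}}\le R_{2^{-n}}$, the size-biased laws above, so the relevant conditional probabilities still sum to infinity. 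For part~(i) I would moreover strengthen the first statement so that it holds \emph{for all} $\sigma\in\cT(a)$ simultaneously, $\Theta_\gamma$-a.e.: a point bad at infinitely many scales forces an infinite chain of ``thin'' subtrees along its ancestral line, and the expected number of such chains of length $k$ decays geometrically because the total defect at level $a-2^{-n}$ is again governed by $[2^{n/(\gamma-1)}g(2^{-n})]^{\gamma}$, which is summable; a Galton--Watson-type extinction argument then rules out any such chain.

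For the passage to packing measures, recall two comparison lemmas (to be proved in the preliminary section on packing measures, where \eqref{doubling} enters) valid for a finite Borel measure $\mu$ on $\cT$, a regular gauge $g$ and a Borel set $A$. First, if $\liminf_{r\downarrow0}\mu(\bar B(\sigma,r))/g(r)\ge c$ for \emph{every} $\sigma\in A$, then $\cP_g(A)\le c^{-1}\mu(A)$; this follows by writing $A$ as the increasing union of the sets where the inequality holds at all scales $\le1/k$, bounding $\sum g(r_n)\le c^{-1}\sum\mu(\bar B(\sigma_n,r_n))$ along a packing and letting the $\varepsilon$-neighbourhood shrink to $A$, and then using continuity from below of the metric outer measure $\cP_g$. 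Second, if $\liminf_{r\downarrow0}\mu(\bar B(\sigma,r))/g(r)=0$ for $\mu$-a.e.\ $\sigma\in A$ and $\mu(A)>0$, then $\cP_g(A)=\infty$; here, were $\cP_g(A)<\infty$, one could cover $A$ by sets of finite $g$-premeasure, one of which, $E$, satisfies $\mu(A\cap E)>0$, and from a Vitali class of balls $\bar B(\sigma,r)$ with $\sigma\in A\cap E$ and $\mu(\bar B(\sigma,r))<\delta g(r)$ one would extract --- using that the traces on $\cT(a)$ of closed balls of $\cT$ form a \emph{laminar} family (two such traces are level-$a$ descendant sets of two ancestors of the centres, hence nested or disjoint) --- a subfamily with pairwise disjoint traces covering $\mu$-almost all of $A\cap E$; halving the radii makes this a genuine packing of $A\cap E$ in $\cT$, and by \eqref{doubling} one gets $\cP^*_g(E)\ge C^{-1}\delta^{-1}\mu(A\cap E)$ for every $\delta>0$, a contradiction. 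Part~(i) now follows from the first lemma with $\mu=\ell^a$, $A=\cT(a)$ (which is compact) and $c$ arbitrarily large, \emph{provided} the uniform density statement of the previous paragraph is available; part~(ii) follows from the second lemma with $\mu=\ell^a$, $A=\cT(a)$ on the event $\{\cT(a)\neq\emptyset\}$, where $\ell^a(\cT(a))>0$ by construction.

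The main obstacle is the sharp small-deviation estimate $\mathbf P(\widehat W\le v)\asymp v^{\gamma}$, together with the bookkeeping that makes the dichotomy fall precisely on $\sum_n[2^{n/(\gamma-1)}g(2^{-n})]^{\gamma}$: this demands a precise analysis of the $\gamma$-stable CSBP excursion near $0$, both at a fixed level (the Tauberian estimate) and along the whole ancestral line (the conditional versions needed for the Markov chain $(R_{2^{-n}})_n$ and for the extinction argument). A secondary difficulty, which cannot be circumvented since $\cP_g$ need not be absolutely continuous with respect to $\ell^a$, is the uniformization over all $\sigma\in\cT(a)$ in case~(i); and, for case~(ii), one must check that the laminar structure of the traced balls yields a Vitali covering theorem on $\cT(a)$ with no doubling hypothesis on $\ell^a$.
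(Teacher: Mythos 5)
Your approach to the two density statements \eqref{lowdensinfty} and \eqref{lowdensnull} is essentially the paper's: decompose along the ancestral line of an $\ell^a$-typical point, identify $\ell^a(\bar B(\sigma,2r))$ with a size-biased version of $\langle\ell^r\rangle$ (this is the content of (\ref{keyloc}) and Lemma \ref{Lstar}), and use a Tauberian small-deviation estimate $\asymp v^\gamma$ together with Borel--Cantelli. Two points deserve more care, the second of which is a genuine gap. First, for the divergent case \eqref{lowdensnull} your ``conditional Borel--Cantelli along the Markov chain $(R_{2^{-n}})_n$'' is not as routine as you suggest: the increments of $r\mapsto L^*_r(a)$ are independent, but the events $\{L^*_{2^{-n}}(a)\le g(2^{-n})\}$ are neither independent nor adapted to a filtration along which second-Borel--Cantelli applies directly. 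The paper instead runs a second-moment argument, exploiting $\Lambda_{2^{-l},2^{-k}}(a)\le L^*_{2^{-k}}(a)$ together with the explicit Laplace transform of $\Lambda$ to bound $N_\gamma(S_n^2)/N_\gamma(S_n)^2$, then invokes Kochen--Stone to get positive probability and Kolmogorov's $0$--$1$ law (tail event of the independent $\Lambda$'s) to upgrade to probability one. This is a concrete step your sketch does not supply.

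The more serious gap is your plan to prove $\cP_g(\cT(a))=0$ in part (i) by upgrading the lower-density bound from $\ell^a$-a.e.\ to \emph{all} $\sigma\in\cT(a)$ via a Galton--Watson extinction count of nested ``thin'' subtrees. The expected number of thin subtrees at level $a-2^{-n}$ is of order $2^{n/(\gamma-1)}h(2^{-n})^{\gamma-1}$ (not $h(2^{-n})^\gamma$, which is the expected number \emph{weighted by} $g(2^{-n})$), and this is not summable under $\sum h(2^{-n})^\gamma<\infty$; moreover, conditionally on a subtree being thin its descendants are also atypically small, so the chain probabilities do not decouple and there is no evident geometric decay. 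The paper's Lemma \ref{packlevbad} sidesteps uniformization entirely: it takes the exceptional set $E=\{\sigma\in\cT(a):\liminf \ell^a(B(\sigma,r))/g(r)<1\}$ and bounds $\cP_g(E)\le\langle\ell^a\rangle$ directly, by decomposing any packing of $E$ according to the dyadic scale of each ball, controlling the ``thin'' balls through the first-moment estimate (\ref{tricomput}) and the weighted count $V_n$ (which \emph{is} summable, precisely because of the extra $g(2^{-n})$ factor), and absorbing the ``fat'' balls into $\langle\ell^a\rangle$; combined with Lemma \ref{genedens}(ii) on $\cT(a)\setminus E$ and a rescaling $g\mapsto Kg$ this yields $\cP_g(\cT(a))=0$. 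Your own caveat that ``$\cP_g$ need not be absolutely continuous with respect to $\ell^a$'' is exactly the point, but the resolution is a direct premeasure bound on $E$, not a uniform density statement. Your laminar Vitali argument for part (ii), while correct, is also heavier than needed: Lemma \ref{genedens}(i) applied to $g/M$ for arbitrary $M$ already gives $\cP_g=\infty$ on $\{\cT(a)\neq\emptyset\}$.
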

This result is not surprising, even in the Brownian case, for it has been proved in \cite{LGPerTay95} that super-Brownian motion with quadratic branching mechanism has no exact packing measure in the super-critical dimension $d\geq 3$ and  \cite{LGPerTay95} provides a test that is closed in some sense to the test given in the previous theorem.

\begin{remark}
\label{packinglevelco} For any $p\geq 1$, define recursively the functions 
$\log_p$ by $\log_1= \log$ and $\log_{p+1}= \log_p \circ \log $. The previous theorem 
provides the following family of critical gauge functions for packing measures of level sets of a $\gamma$-stable tree: For any $\theta \in \bR $ and any $p \geq 1$, set 
$$ g_{p, \theta} (r)= \frac{r^{\frac{1}{\gamma-1} } }{(\log(1/r) \ldots \log_p (1/r))^{\frac{1}{\gamma}} (\log_{p+1} (1/r))^{\theta} } \; .$$
If $\gamma\theta >1$, then for any $a\in (0, \infty)$, one has $ \cP_{g_{p, \theta}} (\cT(a) \,  ) =0$, $\Theta_\gamma$-a.e.$\;$and if $\gamma \theta \leq 1$, then for any $a\in (0, \infty)$, one has $\cP_{g_{p, \theta}} (\cT (a)\, ) =\infty$, $\Theta_\gamma$-a.e.$\;$on the event $\{ \cT (a) \neq \emptyset \}$. \cq 
\end{remark}

\begin{remark}
\label{packingmass} Although the level sets of stable trees have no exact packing measure, 
the whole $\gamma$-stable tree has an exact packing measure as shown in the preprint
\cite{Du2009}. More precisely, for any $r \in (0, 1/e)$, we set 
$$ g(r) = \frac{r^{\frac{\gamma}{\gamma-1}}}{( \log \! \log 1/r )^{\frac{1}{\gamma-1}}} \;  .$$
Then, there exists $c_0 \in (0, \infty)$ such that $\cP_g = c_0  \, \bm$, $\Theta_\gamma$-a.e. \cq
\end{remark}

Let us briefly recall the definition of Hausdorff measures on a $\gamma$-stable tree $(\cT, d)$. 
Let us fix a regular gauge function $g$. For any subset $E\subset \cT$, we set ${\rm diam} (E)= \sup_{x,y\in E} d(x,y)$ that is the diameter of $E$. For any $A \subset \cT$, the {\it $g$-Hausdorff measure of $A$} 
is then given by 
\begin{equation}
\label{Hausdef}
 \cH_g (A)= \lim_{\varepsilon \downarrow 0} \; \inf \Big\{  \sum_{^{n \geq 0}} g({\rm diam} (E_n)) ; 
 \; {\rm diam} (E_n) 
 < \varepsilon  \; {\rm and} \; A \subset \bigcup_{^{n \geq 0}} E_n  \; \Big\} .
 \end{equation}
As in the Euclidian case, $\cH_g$ is a metric and Borel regular outer measure on $\cT$. In the Brownian case Theorem 1.3 in \cite{DuLG3} asserts that there exists a constant $c_1\in (0, \infty)$ such that for any $a\in (0, \infty)$, $\Theta_2$-a.e.$\;$we have $ \cH_{g_1} \left( \, \cdot\, \cap \cT (a) \, \right) = c_1\,  \ell^a $, where $ g_1(r) = r \log \log 1/r $. The non-Brownian stable cases are quite different as shown by the following proposition that asserts that in these cases, there is no exact upper-density for local time measures. Let us mention that the first point of the theorem is proved in Proposition 5.2 \cite{DuLG3}.
\begin{proposition}
\label{Htestlv} Let $\gamma \in (1, 2)$ and let $(\cT , d , \rho)$ be $\gamma$-stable tree under its excursion measure $\Theta_\gamma$. Let $g: (0, 1)\rightarrow (0, \infty)$ be a continuous function such that 
\begin{equation}
\label{levelfonsup}
 \lim_{r\rightarrow 0} g(r)= 0 \quad {\rm and} \quad \lim_{r\rightarrow 0} r^{-\frac{1}{\gamma-1}}g(r)= \infty \; .
\end{equation}
\begin{description}
\item[(i)] \hspace{-3mm}(Prop. 5.2 \cite{DuLG3})  If $\; \sum_{n \geq 1} \frac{2^{-n}}{g (2^{-n})^{\gamma-1}} < \infty $, then for any $a\in (0, \infty)$, $\Theta_\gamma $-a.e.$\;$for $\ell^a$-almost all $\sigma$, we have 
\begin{equation}
\label{uppdensnull}
 \limsup_{n \rightarrow \infty} \frac{\ell^a(B(\sigma , 2^{-n}))}{g(2^{-n})} = 0 \; .
 \end{equation}
Moreover, if $g$ is a regular gauge function, then $\cH_g (\cT (a) \, ) = \infty  $, 
$\Theta_\gamma $-a.e.$\;$on the event $\{ \cT (a) \neq \emptyset \} $. 

\item[(ii)] \hspace{-3mm} If $\; \sum_{n \geq 1} \frac{2^{-n}}{g (2^{-n})^{\gamma-1}} = \infty $, then for any $a\in (0, \infty)$,  $\Theta_\gamma $-a.e.$\;$on the event $\{ \cT (a) \neq \emptyset \} $ and for $\ell^a$-almost all $\sigma$, we have 
\begin{equation}
\label{uppdensinfty}
 \limsup_{n \rightarrow \infty} \frac{\ell^a(B(\sigma , 2^{-n}))}{g(2^{-n})} = \infty \; .
 \end{equation}
\end{description}
\end{proposition}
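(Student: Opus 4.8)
Part~(i) is Proposition~5.2 of \cite{DuLG3}, and its ``moreover'' part follows from \eqref{uppdensnull} by the classical comparison between $g$-Hausdorff measure and upper $g$-densities (a vanishing upper $g$-density of $\ell^a$ at $\ell^a$-almost every point of $\cT(a)$, together with $\ell^a(\cT(a))>0$, forces $\cH_g(\cT(a))=\infty$). So I concentrate on part~(ii); the same scheme reproves~(i).

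\textit{Step 1 (pointed measure and spinal decomposition).} Differentiating $\Theta_\gamma[1-e^{-\lambda\langle\ell^b\rangle}]=u(b,\lambda)$ at $\lambda=0$ and using \eqref{explisolbranch} gives $\Theta_\gamma[\langle\ell^b\rangle]=1$ for every $b>0$, so $\bP^b(d\cT\,d\sigma):=\Theta_\gamma(d\cT)\,\ell^b(d\sigma)$ is a \emph{probability} measure on pairs (tree, point at level $b$). Under $\bP^a$ the branch $[[\rho,\sigma]]$ is isometric to $[0,a]$, and by the branching property together with the spinal decomposition of L\'evy trees (see \cite{DuLG2}), conditionally on this branch the tree is obtained by grafting onto it a Poissonian cloud $\sum_j\delta_{(t_j,\cT_j)}$ of independent $\Theta_\gamma$-subtrees, whose restrictions to disjoint level-windows are independent. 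Moreover, for $\sigma'\in\cT(a)$ one has $d(\sigma,\sigma')=2\bigl(a-h(\sigma\wedge\sigma')\bigr)$, writing $h$ for the distance to $\rho$, so $B(\sigma,r)\cap\cT(a)=\{\sigma'\in\cT(a):h(\sigma\wedge\sigma')>a-r/2\}$. Since $\bP^a$-a.s.\ the ancestor of $\sigma$ at level $a-2^{-n-1}$ is not a branch point of $\cT$ (for each fixed $n\geq1$), this set is exactly the set of level-$a$ points lying in subtrees $\cT_j$ with $t_j>a-2^{-n-1}$, whence the exact identity
\[
\ell^a\bigl(B(\sigma,2^{-n})\bigr)=\sum_{k\geq n}Z_k,\qquad
Z_k:=\sum_{j\,:\,a-2^{-k-1}\leq t_j<a-2^{-k-2}}\bigl\langle\ell^{\,a-t_j}_{\cT_j}\bigr\rangle ,
\]
and, by the Poissonian structure, the variables $(Z_k)_{k\geq1}$ are \emph{independent}.

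\textit{Step 2 (laws of the building blocks).} By the branching property, the subtree above the ancestor of $\sigma$ at level $a-r$ has, when re-rooted at that ancestor and marked at $\sigma$, the law $\bP^r$; hence $\ell^a(B(\sigma,2r))$ is distributed as $\langle\ell^r\rangle$ under $\bP^r$. Since $\bP^r[e^{-\lambda\langle\ell^r\rangle}]=\Theta_\gamma[\langle\ell^r\rangle e^{-\lambda\langle\ell^r\rangle}]=\partial_\lambda u(r,\lambda)$, formula \eqref{explisolbranch} yields
\[
\bP^a\!\left[e^{-\lambda\,\ell^a(B(\sigma,r))}\right]=\Bigl(1+\tfrac{\gamma-1}{2}\,r\,\lambda^{\gamma-1}\Bigr)^{-\frac{\gamma}{\gamma-1}},\qquad 0<r<2a,\ \lambda\geq0 .
\]
In particular $r^{-1/(\gamma-1)}\ell^a(B(\sigma,r))$ has a law independent of $r$, that of some $V>0$ with $1-\bP^a[e^{-\lambda V}]\sim\tfrac{\gamma}{2}\lambda^{\gamma-1}$ as $\lambda\downarrow0$; as $\gamma-1\in(0,1)$, a Tauberian theorem gives $\bP^a(V>x)\sim c_\gamma\,x^{-(\gamma-1)}$ as $x\to\infty$, with $c_\gamma=\gamma/(2\Gamma(2-\gamma))$. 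Similarly the scaling property of $\Theta_\gamma$ and the homogeneity of the spinal cloud give $Z_k\stackrel{d}{=}2^{-(k+2)/(\gamma-1)}Z$ for a fixed $Z\geq0$ with $\bP^a(Z>0)>0$; matching $\prod_{k\geq n}\bP^a[e^{-\lambda Z_k}]$ with the displayed transform of $\ell^a(B(\sigma,2^{-n}))$ forces $1-\bP^a[e^{-\lambda Z}]\sim c'\lambda^{\gamma-1}$ for some $c'>0$, hence $\bP^a(Z>x)\sim(c'/\Gamma(2-\gamma))\,x^{-(\gamma-1)}$.

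\textit{Step 3 (Borel--Cantelli), and the obstacle.} Set $\delta_n:=2^{n/(\gamma-1)}g(2^{-n})$, so that $\delta_n\to\infty$ by \eqref{levelfonsup} and $\delta_n^{-(\gamma-1)}=2^{-n}g(2^{-n})^{-(\gamma-1)}$. In case~(ii) the series $\sum_n\delta_n^{-(\gamma-1)}$ diverges, hence, by the tail of $Z$, for every $\varepsilon>0$,
\[
\sum_{n\geq1}\bP^a\!\left(Z_n>\varepsilon\,g(2^{-n})\right)=\sum_{n\geq1}\bP^a\!\left(Z>4^{1/(\gamma-1)}\varepsilon\,\delta_n\right)=\infty .
\]
Since the $Z_n$ are independent, Borel--Cantelli gives $Z_n>\varepsilon\,g(2^{-n})$ for infinitely many $n$, $\bP^a$-a.s.; applying this for $\varepsilon=1,2,3,\dots$ yields $\limsup_n Z_n/g(2^{-n})=\infty$, $\bP^a$-a.s. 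As $Z_n\leq\ell^a(B(\sigma,2^{-n}))$ for all $n$, we get $\limsup_n\ell^a(B(\sigma,2^{-n}))/g(2^{-n})=\infty$, $\bP^a$-a.s., which, after disintegrating $\bP^a=\Theta_\gamma(d\cT)\ell^a(d\sigma)$ and noting that $\{\langle\ell^a\rangle>0\}=\{\cT(a)\neq\emptyset\}$ up to a $\Theta_\gamma$-null set, is exactly \eqref{uppdensinfty}. (For~(i), $\sum_n\delta_n^{-(\gamma-1)}<\infty$, so $\sum_n\bP^a(\ell^a(B(\sigma,2^{-n}))>\varepsilon g(2^{-n}))=\sum_n\bP^a(V>\varepsilon\delta_n)<\infty$ and the first Borel--Cantelli lemma gives \eqref{uppdensnull}.) The main obstacle is Steps~1--2: making the spinal decomposition of $(\cT,\ell^a,\sigma)$ under $\bP^a$ precise enough to obtain simultaneously the Poissonian independence of the contributions $Z_k$ coming from disjoint level-windows and the exact Laplace transform of $\ell^a(B(\sigma,r))$, and then reading off the common polynomial tail exponent $\gamma-1$ of $V$ and of $Z$. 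Once the transform $(1+\tfrac{\gamma-1}{2}r\lambda^{\gamma-1})^{-\gamma/(\gamma-1)}$ is in hand, the self-similarity, the Tauberian estimate and the Borel--Cantelli argument are routine.
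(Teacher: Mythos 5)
Your proof is correct and takes essentially the same route as the paper: the paper's Section~\ref{Levytreesec} sets up exactly the spinal/Palm decomposition you describe (the Poisson cloud $\cN^*$ with intensity $dU_r\otimes N_\gamma(dH)$), Lemmas~\ref{couronne}, \ref{Lambstar} and \ref{Lstar} are your ``independence of shell contributions'' and ``Laplace transform of the building blocks'', Proposition~\ref{levelestimates}~(i) is your Tauberian estimate, and the proof of the proposition in Section~\ref{proofHtestlv} is the same Borel--Cantelli argument on the shell events, transferred back via the identity~(\ref{keyloc}). The only cosmetic difference is that you obtain the Laplace transform of $\ell^a(\bar B(\sigma,r))$ by re-rooting at the ancestor at level $a-r$ and using $\Theta_\gamma[\langle\ell^r\rangle e^{-\lambda\langle\ell^r\rangle}]=\partial_\lambda u(r,\lambda)$, whereas the paper computes it directly from the subordinator $U$ (Lemma~\ref{Lstar}); both give $(1+\tfrac{\gamma-1}{2}r\lambda^{\gamma-1})^{-\gamma/(\gamma-1)}$.
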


   Recall that a function $g$ {\it is regularly varying at $0$ with exponent $q$} iff for any $c \in (0, \infty)$, $g(cr)/g(r)$ tend to $c^q$ when $r$ goes to $0$. 
\begin{theorem}
\label{noreguHauslv} Let $\gamma \in (1, 2)$ and let $(\cT , d , \rho)$ be $\gamma$-stable tree under its excursion measure $\Theta_\gamma$. Then the level sets of $\cT$ have no exact Hausdorff measure with continuous regularly varying gauge function. More precisely, 
let $g: (0, 1) \rightarrow (0, \infty) $ be a regular gauge function that is regularly varying at $0$. 
\begin{itemize}
\item[-] Either for any $a\in (0,\infty)$, we $\Theta_\gamma$-a.e.$\;$have $\cH_g (\cT (a) )= \infty$, on $\{ \cT (a) \neq 
\emptyset \}$, 
\item[-] or  for any $a\in (0,\infty)$, we $\Theta_\gamma$-a.e.$\;$have $\cH_g (\cT (a) )= 0$.
\end{itemize}
\end{theorem}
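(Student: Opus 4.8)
\medskip\noindent\textbf{Sketch of proof.}
Let $q\in[0,\infty)$ be the index of regular variation of $g$ (one has $q\ge0$ since $g$ is non-decreasing and $\lim_{0+}g=0$); recall that by \cite{DuLG2} the critical exponent $\tfrac1{\gamma-1}$ equals $\dim_H\cT(a)$. The plan is to treat $q\ne\tfrac1{\gamma-1}$ by a soft comparison with power gauges, and $q=\tfrac1{\gamma-1}$ — the substantial case — via Proposition \ref{Htestlv}; since which alternative occurs will only depend on $g$ (through $q$ and the series $\Sigma$ below), the conclusion is automatically uniform in $a$. For $q>\tfrac1{\gamma-1}$, fix $s\in(\tfrac1{\gamma-1},q)$: then $g(r)r^{-s}$ is regularly varying with positive index, hence $\to0$, so $g(r)\le r^{s}$ near $0$ and $\cH_g(\cT(a))\le\cH^{s}(\cT(a))=0$ $\Theta_\gamma$-a.e.\ because $s>\dim_H\cT(a)$ — the second alternative. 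Symmetrically, for $q<\tfrac1{\gamma-1}$, fix $s\in(q,\tfrac1{\gamma-1})$: then $g(r)r^{-s}\to\infty$, so $g(r)\ge r^{s}$ near $0$ and $\cH_g(\cT(a))\ge\cH^{s}(\cT(a))=\infty$ on $\{\cT(a)\ne\emptyset\}$ because $s<\dim_H\cT(a)$ — the first alternative.

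Assume now $q=\tfrac1{\gamma-1}$ and write $g(r)=r^{\frac1{\gamma-1}}L(r)$ with $L$ slowly varying at $0$; combining (\ref{doubling}) with the monotonicity of $g$ gives $2^{-1/(\gamma-1)}L(r)\le L(2r)\le C'L(r)$ for small $r$. Consider the test series of Proposition \ref{Htestlv}, namely $\Sigma:=\sum_{n\ge1}2^{-n}g(2^{-n})^{-(\gamma-1)}=\sum_{n\ge1}L(2^{-n})^{-(\gamma-1)}$. If $\Sigma<\infty$ then $L(2^{-n})\to\infty$, and since $L(r)\ge2^{-1/(\gamma-1)}L(2^{-n-1})$ for $r\in[2^{-n-1},2^{-n}]$ this upgrades to $L(r)\to\infty$ as $r\downarrow0$, so hypothesis (\ref{levelfonsup}) holds and Proposition \ref{Htestlv}(i) gives $\cH_g(\cT(a))=\infty$ on $\{\cT(a)\ne\emptyset\}$ — the first alternative. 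If $\Sigma=\infty$ I claim $\cH_g(\cT(a))=0$. When moreover $\limsup_{r\to0}L(r)<\infty$ this is easy: with $g_0(r):=r^{\frac1{\gamma-1}}\log(1/r)$ one has $g\le(\sup L)\,g_0$ near $0$, and the test series of $g_0$ is $\sum_n(n\log2)^{-(\gamma-1)}=\infty$ (here $\gamma-1<1$ is used), so Proposition \ref{Htestlv}(ii) yields $\limsup_n\ell^a(B(\sigma,2^{-n}))/g_0(2^{-n})=\infty$, hence $\limsup_n\ell^a(B(\sigma,2^{-n}))/g(2^{-n})=\infty$, for $\ell^a$-a.a.\ $\sigma$; the Rogers--Taylor upper-density comparison lemma (valid since $g$ is a regular gauge and $\ell^a$ charges every ball of $\cT(a)$) then forces $\cH_g(\cT(a))=0$ — the second alternative.

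The delicate remaining case, which I expect to be the main obstacle, is $\Sigma=\infty$ with an oscillating slowly varying factor, i.e.\ $\liminf_{r\to0}L(r)<\infty$ (which on its own already forces $\Sigma=\infty$, since $2^{-n}g(2^{-n})^{-(\gamma-1)}\asymp1$ on the scales where $L(2^{-n})$ is near its liminf) while $\limsup_{r\to0}L(r)=\infty$. There the domination by $g_0$ breaks down on the ``peak'' scales, and one cannot simply restrict attention to the ``valley'' scales (where $g\asymp r^{\frac1{\gamma-1}}$), because the $g_0$-weights $(n\log2)^{-(\gamma-1)}$ along a sparse subsequence of valleys may be summable when $\gamma$ is close to $2$. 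The plan is instead to keep all dyadic scales and revisit the proof of Proposition \ref{Htestlv}(ii): its divergence input is $\Sigma=\infty$ itself, and its second-moment input is the Poissonian description of the subtrees of $\cT$ above a level recalled in Section \ref{Levytreesec}. I would check that the standing assumption (\ref{levelfonsup}) is inessential there — it only serves to place $\lambda g(2^{-n})$ in the polynomial tail of the relevant random multiplier, whereas when $g(2^{-n})\asymp2^{-n/(\gamma-1)}$ the matching probability is merely bounded below by a positive constant, which still suffices for the conditional Borel--Cantelli step. Granting this extension of Proposition \ref{Htestlv}(ii), any regular gauge $g$ with $\Sigma=\infty$ satisfies $\limsup_n\ell^a(B(\sigma,2^{-n}))/g(2^{-n})=\infty$ for $\ell^a$-a.a.\ $\sigma$, and the density comparison again gives $\cH_g(\cT(a))=0$. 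Everything outside this point is routine manipulation of regularly varying functions together with the classical density and covering lemmas for Hausdorff measures.
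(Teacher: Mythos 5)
Your handling of the non-critical index ($q\neq 1/(\gamma-1)$) and of the sub-case $\Sigma<\infty$ is correct and follows the natural route. The gap is in your treatment of $\Sigma=\infty$: you want to deduce $\cH_g(\cT(a))=0$ from the statement that $\limsup_n\ell^a(B(\sigma,2^{-n}))/g(2^{-n})=\infty$ for $\ell^a$-almost every $\sigma$. But Lemma \ref{genedens}~(iv) only controls $\cH_g(A)$ for sets $A$ on which the density bound holds for \emph{every} point; applied here it gives $\cH_g(E)=0$ for the $\ell^a$-full set $E$ where the $\limsup$ is infinite, and says nothing about $\cH_g(\cT(a)\setminus E)$. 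The set $\cT(a)\setminus E$ is $\ell^a$-null but could a priori carry positive or even infinite $\cH_g$-measure, and neither your remark that $\ell^a$ charges every ball nor a direct transfer argument closes this; if $\sigma\in\cT(a)\setminus E$ and $\sigma'\in E\cap B(\sigma,r)$, the ball around $\sigma'$ witnessing the large density has radius $s\ll r$, and $g(s)/g(r+s)\to 0$, so nothing propagates to $\sigma$. This asymmetry is precisely why Proposition \ref{Htestlv}~(i) contains the conclusion $\cH_g(\cT(a))=\infty$ while Proposition \ref{Htestlv}~(ii) deliberately states only the $\limsup$ result and \emph{not} $\cH_g(\cT(a))=0$; and it is why Remark \ref{notestlv} explicitly labels the implication ``$\Sigma=\infty\Rightarrow\cH_g(\cT(a))=0$'' a conjecture. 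Your second and third sub-cases therefore amount to claiming that conjecture, which the density lemma alone cannot deliver, and the extension of Proposition \ref{Htestlv}~(ii) to drop hypothesis (\ref{levelfonsup}) would (at best) still only produce a $\limsup$ statement, leaving the same gap.

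The paper takes a genuinely different and more roundabout route to avoid this. It reduces to proving that $N_\gamma(0<\cH_g(\cT(a))<\infty)=0$ for each $a$ and argues by contradiction: assuming the contrary, it uses the branching property at a level $b<a$ together with the scaling invariance of $\Theta_\gamma$ (and the regular variation of $g$ with exponent $1/(\gamma-1)$) to show first that the Laplace transform of $\cH_g(\cT(a))$ coincides with that of $c_0 L^a_\zeta$ for some constant $c_0\in(0,\infty)$, and then, by a localisation along the subtrees above each level $b$, that $\cH_g(\cdot\cap\cT(a))=c_0\,\ell^a$ as measures, $N_\gamma$-a.e. Once this proportionality is in hand, the contradiction is clean: if $\Sigma<\infty$ Proposition \ref{Htestlv}~(i) forces $\cH_g(\cT(a))=\infty$, and if $\Sigma=\infty$ the density lemma gives $\cH_g(E)=0$, whereas the proportionality gives $\cH_g(E)=c_0\ell^a(E)=c_0\ell^a(\cT(a))>0$. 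Note that the measure identity also resolves the $\cT(a)\setminus E$ problem automatically, since $\cH_g(\cT(a)\setminus E)=c_0\ell^a(\cT(a)\setminus E)=0$. To repair your proof you would need to supply an argument of this type (or an independent proof of the conjectured implication); as written, it does not establish the theorem.
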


\begin{remark}
\label{notestlv}
Proposition \ref{Htestlv} and Theorem \ref{noreguHauslv} suggest that if 
$$\sum_{n \geq 1} \frac{2^{-n}}{
g (2^{-n})^{_{\gamma-1}}} = \infty \; , $$
then, $\cH_g (\cT(a))= 0$, $\Theta_\gamma$-a.e.$\;$as conjectured in \cite{DuLG3}. The best result in this direction is 
Theorem 1.5 in \cite{DuLG3} that shows that  $\cH_g (\cT(a))= 0$, $\Theta_\gamma$-a.e.$\;$if $g$ is of the following form: 
$$ g(r)= r^{-\frac{1}{\gamma-1}} (\log \frac{_1}{^r})^{\frac{1}{\gamma-1}} (\log \! \log  \frac{_1}{^r})^u \; , $$
with $u <0$. \cq 
\end{remark}

Let us discuss now the Hausdorff properties of whole stable trees. In the Brownian case, Theorem 1.1 in \cite{DuLG3} asserts that there exists a constant $c_2\in (0, \infty)$ such that $\Theta_2$-a.e.$\; $we have $\cH_{g_2}  = c_2\, \bm$, where $g_2(r) = r^2\log  \log 1/r $. In the non-Brownian stable cases, the situation is quite different as shown by following proposition that asserts that in these  cases, the mass measure has no exact upper-density. Let us mention that the first point of the theorem is proved in Proposition 5.1 \cite{DuLG3}. 
\begin{proposition}
\label{Htestmass} Let $\gamma \in (1, 2)$ and let $(\cT , d , \rho)$ be $\gamma$-stable tree under its excursion measure $\Theta_\gamma$. Let $g: (0, 1)\rightarrow (0, \infty)$ be a function such that 
\begin{equation}
\label{massfonsup}
 \lim_{r\rightarrow 0} g(r)= 0 \quad {\rm and} \quad \lim_{r\rightarrow 0} r^{-\frac{\gamma}{\gamma-1}}g(r)= \infty \; .
\end{equation}
\begin{description}
\item[(i)] \hspace{-3mm}(Prop. 5.1 \cite{DuLG3}) If $\; \sum_{n \geq 1} \frac{2^{-\gamma \, n}}{g (2^{-n})^{\gamma-1}} < \infty $, then $\Theta_\gamma $-a.e.$\;$for $\bm$-almost all $\sigma$, we have  
\begin{equation}
\label{massuppdensnull}
 \limsup_{n \rightarrow \infty} \frac{\bm(B(\sigma , 2^{-n}))}{g(2^{-n})} = 0 \; .
 \end{equation}
Moreover, if $g$ is a regular gauge function, then $\cH_g (\cT) =  \infty  $, $\Theta_\gamma $-a.e.
\item[(ii)] \hspace{-3mm} If $\; \sum_{n \geq 1} \frac{2^{-\gamma \, n}}{g (2^{-n})^{\gamma-1}} = \infty $, then $\Theta_\gamma $-a.e.$\;$for $\bm$-almost all $\sigma$, we have 
\begin{equation}
\label{massuppdensinfty}
 \limsup_{n \rightarrow \infty} \frac{\bm(B(\sigma , 2^{-n}))}{g(2^{-n})} = \infty \; .
 \end{equation}
\end{description}
\end{proposition}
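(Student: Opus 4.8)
Only assertion (ii) needs a proof, since (i) is Proposition~5.1 of~\cite{DuLG3}. By Fubini's theorem applied to the $\sigma$-finite measure $\Theta_\gamma(d\cT)\bm(d\sigma)$ it is enough to prove that $\Theta_\gamma(d\cT)\bm(d\sigma)$-a.e. one has $\limsup_{n\to\infty}\bm(B(\sigma,2^{-n}))/g(2^{-n})=\infty$. The plan is to run a second Borel--Cantelli argument on the contributions, at the successive dyadic scales, of the subtrees branching off the branch from $\rho$ to $\sigma$. To that effect I would invoke the Bismut-type spinal decomposition of $\Theta_\gamma(d\cT)\bm(d\sigma)$ along $\lgeo\rho,\sigma\rgeo$ from~\cite{DuLG2}: writing $h=d(\rho,\sigma)$, then conditionally on the spine (in particular on $h$) the subtrees $(\cT_i)$ hanging off $\lgeo\rho,\sigma\rgeo$, together with their distances $\delta_i$ to $\sigma$, form a Poisson point measure on $(0,h)\times\bbT$ with intensity $\un_{(0,h)}(t)\,dt\otimes\cN^{*}(d\cT')$, where $\cN^{*}$ is the grafting measure of the $\gamma$-stable mechanism; for $\psi(\lambda)=\lambda^\gamma$ it is a mixture $\cN^{*}=c\int_{(0,\infty)}\ell\,\pi(d\ell)\,\bQ_\ell$, with $\pi(d\ell)$ proportional to $\ell^{-1-\gamma}d\ell$ and $\bQ_\ell$ the law of the $\gamma$-stable forest issued from a ``width'' $\ell$, i.e. the rooted tree coded by a $\gamma$-stable CSBP started at $\ell$.

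For $n\ge1$ set $\Sigma_n=\sum_{i\,:\,2^{-n-1}<\delta_i<2^{-n}}\bm_{\cT_i}\big(\bar{B}_{\cT_i}(\rho_i,2^{-n})\big)$, where $\rho_i$ is the grafting point of $\cT_i$. Since a subtree grafted at distance $\delta_i\in(2^{-n-1},2^{-n})$ from $\sigma$ contributes to $B(\sigma,2^{-n+1})$ at least its ball of radius $2^{-n+1}-\delta_i>2^{-n}$ around $\rho_i$, one has $\bm(B(\sigma,2^{-n+1}))\ge\Sigma_n$; moreover the $\Sigma_n$ are measurable with respect to the restrictions of the above Poisson measure to the pairwise disjoint strips $\{2^{-n-1}<\delta<2^{-n}\}$, hence are mutually independent conditionally on the spine (for all $n$ with $2^{-n}<h$, which is all that is needed since $\limsup_n\Sigma_n/g(2^{-n})$ is a tail quantity). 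It will therefore suffice to prove that, conditionally on the spine and for every $M>0$,
\[
\sum_{n\ge1}\bbP\big(\Sigma_n>M\,g(2^{-n+1})\big)=\infty .
\]

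To bound $\bbP(\Sigma_n>y)$ from below I would keep only the event that a single $\cT_i$ in the $n$-th strip already satisfies $\bm_{\cT_i}(\bar{B}(\rho_i,2^{-n}))>y$, which has probability $1-\exp(-\mu_n(y))$ with $\mu_n(y)\asymp 2^{-n}\,\cN^{*}\big(\bm(\bar{B}(\rho,2^{-n}))>y\big)$. Now $\bm(\bar{B}(\rho,r))=\int_0^r\langle\ell^a\rangle\,da$ is, under $\bQ_\ell$, the integral over $[0,r]$ of the stable CSBP started at $\ell$; using the scaling $Z^{(\ell)}_a\overset{d}{=}\ell\,Z^{(1)}_{\ell^{\gamma-1}a}$ and the elementary fact that $\int_0^sZ^{(1)}_a\,da$ stays of order $s$ with probability bounded away from $0$ for $s$ small, one checks that $\bQ_\ell(\bm(\bar{B}(\rho,r))>y)$ is bounded below by a positive absolute constant when $\ell$ is of order $y/r$ (recall $r$ is small and $y\gg r^{\gamma/(\gamma-1)}$, so $\ell^{\gamma-1}r$ is small). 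Restricting the integral defining $\cN^{*}$ to $\ell$ of order $y/r$ and using $\ell\,\pi(d\ell)\propto\ell^{-\gamma}d\ell$ then yields $\cN^{*}(\bm(\bar{B}(\rho,r))>y)\gtrsim(r/y)^{\gamma-1}$, whence $\mu_n(y)\gtrsim 2^{-\gamma n}y^{1-\gamma}$ and $\bbP(\Sigma_n>y)\gtrsim\min\{1,2^{-\gamma n}y^{1-\gamma}\}$. Taking $y=M\,g(2^{-n+1})$ and noting that replacing $2^{-n+1}$ by $2^{-n}$ only multiplies the series $\sum_n2^{-\gamma n}g(2^{-n})^{1-\gamma}$ by the constant $2^{-\gamma}$ (so no regularity of $g$ is used), the hypothesis $\sum_n2^{-\gamma n}g(2^{-n})^{1-\gamma}=\infty$ gives the displayed divergence. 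By independence, a.s. infinitely many of the events $\{\Sigma_n>M\,g(2^{-n+1})\}$ occur; intersecting over $M\in\bN$ gives $\limsup_n\Sigma_n/g(2^{-n+1})=\infty$ a.s., and $\bm(B(\sigma,2^{-n+1}))\ge\Sigma_n$ together with a shift of index concludes.

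The main obstacle is twofold. First, one must state the spinal decomposition and the grafting measure $\cN^{*}$ in exactly this form, extracting from~\cite{DuLG2} that, conditionally on the spine, the grafted subtrees form a Poisson measure with the mixture intensity above. Second, the lower bound $\cN^{*}(\bm(\bar{B}(\rho,r))>y)\gtrsim(r/y)^{\gamma-1}$ must be established; this is where the exponent in the series test is produced, and it rests on the one-parameter estimate for $\int_0^sZ^{(1)}_a\,da$ and on the $\ell^{-\gamma}$ decay of the size-biased Lévy measure $\ell\,\pi(d\ell)$. The mutual independence of the scale-$n$ contributions $\Sigma_n$, provided by the Poisson structure of the decomposition, is the structural point that makes the second Borel--Cantelli lemma applicable here.
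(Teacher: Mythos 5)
Your proof is correct, and the overall strategy is the same as the paper's: a spinal (Bismut) decomposition along the ancestral line of a $\bm$-typical vertex, independence of the contributions from disjoint dyadic shells, and the second Borel--Cantelli lemma. The genuine difference is in how the key tail estimate is obtained. The paper works directly with the shell increment $Q_{2^{-n-1},2^{-n}}(a)$ of (\ref{massshelldef}): it computes its Laplace transform exactly in terms of the branching-mechanism function $\kappa$ (Lemma \ref{MQstar}), deduces exact scaling, and then applies the Bingham--Doney Tauberian theorem to get the precise asymptotic $\bP(Y_\gamma\geq x)\sim x^{-(\gamma-1)}/\Gamma(2-\gamma)$ (Proposition \ref{massup}). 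You instead exploit the mixture structure of the spinal grafting intensity $\cN^{*}$ --- the Poisson measure with intensity $dU_r\otimes N_\gamma$ seen as $dt\otimes\nu(d\ell)\otimes\bQ_\ell$ with $\nu(d\ell)\propto\ell^{-\gamma}d\ell$ --- restrict to $\ell\asymp y/r$, and use a crude one-sided estimate on $\int_0^s Z^{(1)}_a\,da$ to get the lower bound $\cN^{*}(\bm(\bar B(\rho,r))>y)\gtrsim (r/y)^{\gamma-1}$, which is enough for divergence of the Borel--Cantelli series. Both routes produce the same exponent $\gamma-1$; the paper's is tighter (exact tails, and the same Laplace transform is reused to give scaling identities exactly rather than up to constants), while yours is slightly more hands-on and bypasses the Tauberian theorem for this half of the statement. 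Two small points worth flagging when writing it up: (a) the mixture form of $\cN^{*}$ must be extracted carefully from (\ref{Nstardef})--(\ref{ancdecomp}), since the paper phrases the intensity via the subordinator $U$ rather than via its Lévy measure, and (b) your shells use a fixed radius $2^{-n}$ around each grafting point while the paper's $Q_{2^{-n-1},2^{-n}}(a)$ uses the intrinsic (smaller, variable) radius; the two give the same exponent, and your choice simply makes the inclusion $\Sigma_n\le\bm(B(\sigma,2^{-n+1}))$ immediate.
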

The previous proposition is completed by the following result. 
 \begin{theorem}
\label{noreguHausmass} Let $\gamma \in (1, 2)$ and let $(\cT , d , \rho)$ be $\gamma$-stable tree under its excursion measure $\Theta_\gamma$. Then $\cT$ has no exact Hausdorff measure with continuous regularly varying gauge function. More precisely, let 
$g: (0, 1) \rightarrow (0, \infty) $ be a regular gauge function that is regularly varying at $0$. 
\begin{itemize}
\item[-] Either $\cH_g (\cT) = \infty$, $\Theta_\gamma$-a.e.
\item[-] or $\cH_g (\cT) =0$, $\Theta_\gamma$-a.e.
\end{itemize}
\end{theorem}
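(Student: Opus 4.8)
\medskip\noindent\textbf{Proof strategy.}
Let $q\in[0,\infty)$ be the index of regular variation of $g$ at $0$. Recall from \cite{DuLG2} that, $\Theta_\gamma$-a.e., the $s$-dimensional Hausdorff measure of $\cT$ is $\infty$ when $s<\gamma/(\gamma-1)$ and $0$ when $s>\gamma/(\gamma-1)$. If $q<\gamma/(\gamma-1)$, pick $q'\in(q,\gamma/(\gamma-1))$: then $r^{-q'}g(r)$ is regularly varying of negative index, hence tends to $\infty$, so $g(r)\ge r^{q'}$ near $0$ and therefore $\cH_g(\cT)=\infty$, $\Theta_\gamma$-a.e., which is the first alternative. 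If $q>\gamma/(\gamma-1)$, pick $q'\in(\gamma/(\gamma-1),q)$: then $r^{-q'}g(r)\to0$, so $g(r)\le r^{q'}$ near $0$ and $\cH_g(\cT)=0$, $\Theta_\gamma$-a.e., the second alternative. So assume the critical index $q=\gamma/(\gamma-1)$, write $g(r)=r^{\gamma/(\gamma-1)}L(r)$ with $L$ continuous and slowly varying at $0$, and observe that the series appearing in Proposition \ref{Htestmass} equals $\Sigma:=\sum_{n\ge1}L(2^{-n})^{-(\gamma-1)}$.

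\medskip\noindent
If $\Sigma<\infty$, then $L(2^{-n})\to\infty$, and since $g$ is non-decreasing one gets $L(r)\ge2^{-\gamma/(\gamma-1)}L(2^{-n-1})$ for $r\in[2^{-n-1},2^{-n}]$, hence $\lim_{r\downarrow0}r^{-\gamma/(\gamma-1)}g(r)=\lim_{r\downarrow0}L(r)=\infty$. As $\lim_{r\downarrow0}g(r)=0$ too, hypothesis \eqref{massfonsup} holds, and Proposition \ref{Htestmass}(i) gives $\cH_g(\cT)=\infty$, $\Theta_\gamma$-a.e., again the first alternative.

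\medskip\noindent
If $\Sigma=\infty$, I would show $\cH_g(\cT)=0$, $\Theta_\gamma$-a.e. Since \eqref{massfonsup} may fail for $g$ itself (e.g.\ if $L$ is bounded or tends to $0$), first replace $g$ by a dominating regular gauge for which it holds: put $g'(r):=\max\!\big(g(r),\,r^{\gamma/(\gamma-1)}\log\log(1/r)\big)=r^{\gamma/(\gamma-1)}L'(r)$, where $L'(r)=\max(L(r),\log\log(1/r))$ is slowly varying with $L'(r)\to\infty$, so $g'$ is a regular gauge satisfying \eqref{massfonsup}; and, using that $L$ is slowly varying, one checks that the test series of $g'$, namely $\sum_{n\ge1}L'(2^{-n})^{-(\gamma-1)}$, still diverges. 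By the $C$-doubling condition the dyadic upper density in Proposition \ref{Htestmass} is comparable to $\limsup_{r\downarrow0}\bm(\bar{B}(\sigma,r))/g'(r)$, so Proposition \ref{Htestmass}(ii), applied to $g'$, gives that $A_0:=\{\sigma\in\cT:\limsup_{r\downarrow0}\bm(\bar{B}(\sigma,r))/g'(r)=\infty\}$ satisfies $\bm(\cT\setminus A_0)=0$, $\Theta_\gamma$-a.e. A standard Rogers--Taylor (Vitali $5r$-covering) argument then forces $\cH_{g'}(A_0)=0$: covering $A_0$ by closed balls $\bar{B}(\sigma,r_\sigma)$ with $r_\sigma$ small and $\bm(\bar{B}(\sigma,r_\sigma))\ge\lambda g'(r_\sigma)$, extracting a disjoint subfamily whose $5$-dilations still cover $A_0$, and using disjointness and the doubling condition bounds the total $g'$-content by $C^3\lambda^{-1}\langle\bm\rangle$, and $\lambda\uparrow\infty$ gives $\cH_{g'}(A_0)=0$. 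Hence $\cH_g(\cT)\le\cH_{g'}(\cT)=\cH_{g'}(\cT\setminus A_0)$.

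\medskip\noindent
It remains to prove $\cH_{g'}(\cT\setminus A_0)=0$ although $\cT\setminus A_0$ is only known to be $\bm$-negligible; this is the main obstacle. I would show that, $\Theta_\gamma$-a.e., $\cT\setminus A_0$ contains no leaf of $\cT$ --- equivalently, that the upper density $\limsup_{r\downarrow0}\bm(\bar{B}(\sigma,r))/g'(r)$ is infinite at \emph{every} leaf, not merely at $\bm$-almost every one --- by exploiting the height-process coding of $\cT$ and the branching property: branch points of arbitrarily large multiplicity accumulate at every point of $\cT$ at every scale, so that the mass of small balls around an arbitrary leaf fluctuates as wildly as around a $\bm$-typical leaf. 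Carrying this out is the principal technical work. Granting it, $\cT\setminus A_0$ is contained in the skeleton of $\cT$ (the complement of its set of leaves), which is a countable union of line segments; since $g'$ has index $\gamma/(\gamma-1)>1$, covering a segment of length $\ell$ by $N$ sub-arcs of diameter $\ell/N$ costs $Ng'(\ell/N)=\ell^{\gamma/(\gamma-1)}N^{-1/(\gamma-1)}L'(\ell/N)\to0$ as $N\to\infty$, so $\cH_{g'}$ of the skeleton vanishes and therefore $\cH_g(\cT)=0$, $\Theta_\gamma$-a.e., the second alternative. (As a consistency check, the scaling property of stable trees gives $\cH_g\big((\cT,c\,d,\rho)\big)=c^{\gamma/(\gamma-1)}\cH_g\big((\cT,d,\rho)\big)$, which together with the scaling of $\Theta_\gamma$ forces $t\mapsto\Theta_\gamma(\cH_g(\cT)>t)$ to be a power function; this matches the dichotomous form of the statement but does not by itself decide which of the two alternatives occurs --- that is supplied by the test $\Sigma<\infty$ versus $\Sigma=\infty$.)
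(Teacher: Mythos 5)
Your reduction to the critical index $q=\gamma/(\gamma-1)$ and your treatment of the case $\Sigma<\infty$ via Proposition \ref{Htestmass}(i) are fine and agree with the paper. The genuine gap is the $\Sigma=\infty$ case, and you have put your finger on it yourself: after the Rogers--Taylor/Vitali step you are left with the $\bm$-null set $\cT\setminus A_0$, and you propose --- without proof --- to show that it meets no leaf, i.e.\ that the upper $\bm$-density with respect to $g'$ is infinite at \emph{every} leaf. Proposition \ref{Htestmass}(ii), proved via the spinal decomposition at a $\bm$-random point, gives only the $\bm$-a.e.\ statement, and there is no pointwise density theorem to upgrade it. In fact, carrying out your plan would prove that $\Sigma=\infty$ implies $\cH_g(\cT)=0$, which is strictly stronger than the theorem and is left as a conjecture in the paper (see Remark \ref{notestmass} and the partial result Theorem 1.4 of \cite{DuLG3}). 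The obstacle you flag is precisely why that conjecture is open, so ``granting it'' is not a harmless step to defer.

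The paper sidesteps the pointwise issue entirely by a more indirect contradiction argument that does not try to compute $\cH_g(\cT)$. It assumes $N_\gamma(0<\cH_g(\cT)<\infty)>0$ and, using the branching property through the functional $\tilde\kappa_a(\lambda,\mu)=N_\gamma(1-e^{-\lambda\cH_g(\bar B(\rho,a))-\mu L^a_\zeta})$, the scaling identity \eqref{scaleHausmass}, a linearity argument for $\phi(\lambda)=\lim_n n\tilde\kappa_{1/n}(\lambda,0)$, and an $L^2$ telescoping computation, establishes that $N_\gamma$-a.e.\ $\cH_g=c_0\bm$ for some constant $c_0\in(0,\infty)$. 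Once that identity is in hand, the test decides the matter either way: if $\Sigma<\infty$ then Proposition \ref{Htestmass}(i) forces $\cH_g(\cT)=\infty$, contradicting finiteness; if $\Sigma=\infty$ then Proposition \ref{Htestmass}(ii) gives a Borel set $E$ of full $\bm$-measure on which the upper density is infinite, Lemma \ref{genedens}(iv) gives $\cH_g(E)=0$, yet $\cH_g(E)=c_0\bm(E)>0$ --- contradiction. The crucial gain is that the equality $\cH_g=c_0\bm$ converts the $\bm$-a.e.\ density statement into a $\cH_g$-a.e.\ one, which is exactly what the comparison lemma needs, so no control of $\cH_g$ on a $\bm$-null set and no pointwise density statement are ever required. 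You should adopt this contradiction scheme rather than try to prove the conjectural implication $\Sigma=\infty\Rightarrow\cH_g=0$.
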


\begin{remark}
\label{notestmass}
Proposition \ref{Htestmass} and Theorem \ref{noreguHausmass} suggest that if 
$$\sum_{n \geq 1} \frac{2^{-\gamma \, n}}{ g (2^{-n})^{_{\gamma-1}}} = \infty \, $$ 
then, $\cH_g (\cT)= 0$, $\Theta_\gamma$-a.e.$\;$as conjectured in \cite{DuLG3}. The best result in this direction is Theorem 1.4 in \cite{DuLG3} that show that  $\cH_g (\cT)= 0$, $\Theta_\gamma$-a.e.$\;$if $g$ is of the following form: 
$$ g(r)= r^{-\frac{\gamma}{\gamma-1}} (\log \frac{_1}{^r})^{\frac{1}{\gamma-1}} (\log \! \log  \frac{_1}{^r})^u \; , $$
with $u <0$. \cq 
\end{remark}

\bigskip

The paper is organised as follows. In Section \ref{packingsec}, we recall the basic comparison results on Hausdorff and packing measures in metric spaces. In Section \ref{Levytreesec}, we introduce the $\gamma$-stable height processes and the $\gamma$-stable trees, and we recall a key decomposition of stable trees according the ancestral line of a randomly chosen vertex that is used to prove the upper- and lower-density results for the local time measures and the mass measure. In Section \ref{estimsec}, we state various estimates that are used in the proof sections. Section \ref{Pfsec} is devoted to the proofs of the main results of the paper.

\section{Notation, definitions and preliminary results.}
\label{notadefsec}

\subsection{Hausdorff and packing measures on metric spaces.}
\label{packingsec}
Though standard in Euclidian spaces (see Taylor and Tricot \cite{TaTr}), packing measures are less usual in Polish spaces that is why we briefly recall few results in this section.  As already mentioned, we restrict our attention to {\it continuous gauge functions that satisfy a doubling condition}: Let us fix $C >1$. 
We denote by $\cG_C$ the set of such regular gauge functions that satisfy a $C$-doubling condition and we set $\cG= \bigcup_{C>1} \cG_C$ that is the set of the gauge functions we shall consider. Let us mention that, instead of regular gauge functions, some authors speak of {\it blanketed Hausdorff functions} after Larman \cite{Lar67}. 

  Let  $(\cT, d)$ be an uncountable complete and separable metric space. Let us fix $g\in \cG_C$. 
Recall from (\ref{packdef}) the definition of the $g$-packing measure $\cP_g$ and from (\ref{Hausdef}) the definition of the $g$-Hausdorff measure $\cH_g$. We shall use the following comparison results. 
\begin{lemma}
\label{genedens} (Taylor and Tricot  \cite{TaTr}, Edgar \cite{Edgar07}). 
Let $g \in \cG_C$. Then, for any finite Borel measure $\mu$ on $\cT$ and for any Borel subset $A$ of $\cT$, the following holds true.
\begin{description} 
\item[(i)] \hspace{-3mm}If  $\;  \liminf_{r \rightarrow 0} \frac{\mu (B(\sigma, r))}{g(r )} \leq 1 $, for any $\sigma \in A$, then $  
\cP_g (A)  \geq C^{-2} \mu (A) $. 
\item[(ii)] \hspace{-4mm} If  $\; \liminf_{r \rightarrow 0} \frac{\mu (B(\sigma, r))}{g(r)} \geq 1 $, for any $\sigma \in A$, then 
$  \cP_g (A)  \leq    \mu (A) $. 
\item[(iii)]  \hspace{-4mm} If  $ \; \limsup_{r \rightarrow 0} \frac{\mu (B(\sigma, r))}{g(r )} \leq 1 $, for any $\sigma \in A$, then $  
\cH_g (A)  \geq C^{-1} \mu (A) $. 
\item[(iv)]  \hspace{-4mm} If  $\;  \limsup_{r \rightarrow 0} \frac{\mu (B(\sigma, r))}{g(r)} \geq 1 $, for any $\sigma \in A$, then 
$  \cH_g (A)  \leq  C  \mu (A) $. 
\end{description}
\end{lemma}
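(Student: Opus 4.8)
\medskip\noindent\emph{Proof idea.}\quad These are the classical density comparisons between a finite measure and the Hausdorff and packing measures built from a doubling gauge, and I would treat the four assertions separately. The ingredients are: outer regularity of a finite Borel measure on a metric space ($\mu(A)=\inf\{\mu(U):U\supseteq A\text{ open}\}$ for Borel $A$); continuity of $g$ with the $C$-doubling inequality (\ref{doubling}), which gives $g(\lambda r)\le C^{\lceil\log_2\lambda\rceil}g(r)$ for $\lambda\ge1$; the Vitali-type covering lemma for fine covers by closed balls, in the refined form where the covering balls need only be dilated by a factor $<4$; and the fact that $\cH_g$ and $\cP_g$ are Borel-regular metric outer measures (so the restriction to Borel $A$ is harmless and the stratifications below are legitimate). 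Assertions (iii) and (ii) are of mass-distribution type; (iv) amounts to building an efficient cover of $A$; and (i) --- producing an efficient \emph{packing} from a hypothesis that only controls $\mu$ along a sparse, point-dependent set of scales --- is the delicate one, and the step I expect to be the main obstacle.

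For (iii): fix $\eta>0$ and put $A_m=\{\sigma\in A:\mu(B(\sigma,r))\le(1+\eta)g(r)\text{ for all }r\in(0,1/m]\}$, so $A_m\uparrow A$. For a cover $A_m\subseteq\bigcup_n E_n$ with $\diam E_n<1/m$, discard the $E_n$ missing $A_m$, pick $\sigma_n\in E_n\cap A_m$, note $E_n\subseteq\bar{B}(\sigma_n,\diam E_n)\subseteq B(\sigma_n,d')$ for $d'>\diam E_n$, and let $d'\downarrow\diam E_n$ (continuity of $g$): this gives $\mu(E_n\cap A_m)\le(1+\eta)g(\diam E_n)$. Summing, taking the infimum over covers, then $m\to\infty$ and $\eta\to0$, yields $\cH_g(A)\ge\mu(A)\ge C^{-1}\mu(A)$. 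For (ii): take $A_m=\{\sigma\in\cT:\mu(\bar{B}(\sigma,r))\ge(1-\eta)g(r)\text{ for all }r\le1/m\}$, which is \emph{closed} since $\sigma\mapsto\mu(\bar{B}(\sigma,r))$ is upper semicontinuous, and note $A\subseteq\bigcup_m A_m$ (using $\mu(B(\sigma,r))=\lim_{r'\uparrow r}\mu(\bar{B}(\sigma,r'))$). Any disjoint $\varepsilon$-packing $(\bar{B}(x_n,r_n))_n$ of $A_m$ ($\varepsilon<1/m$) has disjoint balls contained in $A_m^{(2\varepsilon)}$, so $\sum_n g(r_n)\le(1-\eta)^{-1}\mu(A_m^{(2\varepsilon)})$; letting $\varepsilon\downarrow0$ and using that $A_m$ is closed gives $\cP^*_g(A_m)\le(1-\eta)^{-1}\mu(A_m)$, whence, $\cP_g$ being a Borel measure, $\cP_g(A)\le\lim_m\cP_g(A_m)\le(1-\eta)^{-1}\lim_m\mu(A_m)=(1-\eta)^{-1}\mu(\bigcup_m A_m)$; letting $\eta\downarrow0$ gives $\cP_g(A)\le\mu(A)$.

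For (iv) one may assume $\mu(A)<\infty$. Given $\eta,\delta>0$, take open $U\supseteq A$ with $\mu(U)<\mu(A)+\eta$ and, for each $\sigma\in A$, a radius $r_\sigma<\delta$ with $\bar{B}(\sigma,r_\sigma)\subseteq U$ and $\mu(\bar{B}(\sigma,r_\sigma))\ge(1-\eta)g(r_\sigma)$ (using $\limsup_{r\downarrow0}\mu(B(\sigma,r))/g(r)\ge1$). The refined covering lemma gives a disjoint subfamily $\{\bar{B}(\sigma_i,r_i)\}$ with $A\subseteq\bigcup_i\bar{B}(\sigma_i,4r_i)$, and since $\diam\bar{B}(\sigma_i,4r_i)\le8r_i$ one gets $\cH^{(8\delta)}_g(A)\le\sum_i g(8r_i)\le C^3\sum_i g(r_i)\le C^3(1-\eta)^{-1}\sum_i\mu(\bar{B}(\sigma_i,r_i))\le C^3(1-\eta)^{-1}\mu(U)$; letting $\delta,\eta\to0$ gives $\cH_g(A)\le C^3\mu(A)$. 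This is a constant multiple of $\mu(A)$, which is all that is used in this paper; the optimal constant $C$ of the statement comes from the sharper constructions of \cite{TaTr,Edgar07} (equivalently, by passing through the centered Hausdorff measure).

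For (i), the crux: since $\cP_g(A)=\inf\{\sum_j\cP^*_g(F_j):A\subseteq\bigcup_j F_j\}$ and $\mu(A)\le\sum_j\mu(A\cap F_j)$, it suffices to prove $\cP^*_g(F)\ge C^{-2}\mu(A\cap F)$ for every Borel $F$; fix $F$ and put $E=A\cap F$. The difficulty is that $\liminf_{r\downarrow0}\mu(B(\sigma,r))/g(r)\le1$ controls $\mu$ of balls about $\sigma\in E$ only along a thin, $\sigma$-dependent sequence of radii, so balls produced by a covering lemma cannot be estimated naively after dilation. I would handle this as follows: for $\sigma\in E$ and $\delta>0$, use the $\liminf$ to choose $\rho<\delta$ with $\mu(B(\sigma,4\rho))\le(1+\eta)g(4\rho)$ and --- since $\mu$, being finite, charges only countably many spheres about $\sigma$ whereas this admissibility persists under small perturbations of $\rho$ --- choose $\rho$ also with $\mu(\partial B(\sigma,4\rho))=0$, so that $\mu(\bar{B}(\sigma,4\rho))\le(1+\eta)g(4\rho)\le(1+\eta)C^2g(\rho)$. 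The closed balls $\bar{B}(\sigma,\rho)$ thus obtained form a fine cover of $E$; the refined covering lemma extracts a disjoint subfamily $\{\bar{B}(\sigma_i,\rho_i)\}$ --- a $\delta$-packing of $F$, so $\cP^{*,(\delta)}_g(F)\ge\sum_i g(\rho_i)$ --- with $E\subseteq\bigcup_i\bar{B}(\sigma_i,4\rho_i)$. Hence $\mu(E)\le\sum_i\mu(\bar{B}(\sigma_i,4\rho_i))\le(1+\eta)C^2\sum_i g(\rho_i)\le(1+\eta)C^2\,\cP^{*,(\delta)}_g(F)$; letting $\eta,\delta\to0$ gives $\cP^*_g(F)\ge C^{-2}\mu(E)$, and summing over the cover $\{F_j\}$ gives $\cP_g(A)\ge C^{-2}\mu(A)$. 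The points genuinely requiring care --- the precise refined Vitali covering lemma for fine covers (a countable disjoint subfamily with the stated dilates) and, for sharp constants, the pre-measure-to-measure bookkeeping --- are where one should follow Taylor--Tricot \cite{TaTr} and Edgar \cite{Edgar07}.
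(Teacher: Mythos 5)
The paper does not actually prove Lemma \ref{genedens}: it cites Rogers--Taylor \cite{RoTa} and Taylor--Tricot \cite{TaTr} for the Euclidean case and, for general metric spaces, Edgar \cite{Edgar07} (Theorem 4.15 and Proposition 4.24 for (i)--(ii), Theorem 5.9 for (iii)--(iv)). Your sketch therefore supplies an argument the paper delegates to those references, and it follows the classical route: stratify by scale, use the doubling condition, and invoke a Vitali-type covering lemma. Parts (iii), (i) and (iv) are essentially sound as written; the extra powers of $C$ you collect in (iv), and the dependence of the constant in (i) on the exact dilate of the covering lemma, are immaterial here since the paper only ever uses the existence of \emph{some} finite constant depending on $C$ (the paper's own Remark \ref{packferm} already concedes that the bounds shift in an obvious way with the conventions).

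Part (ii), though, has a genuine gap. You take $A_m=\{\sigma\in\cT:\mu(\bar{B}(\sigma,r))\ge(1-\eta)g(r)\ \forall r\le 1/m\}$ over all of $\cT$ precisely so that $A_m$ be closed, and your chain terminates at $\cP_g(A)\le(1-\eta)^{-1}\mu\big(\bigcup_m A_m\big)$. But $\bigcup_m A_m$ is (essentially) $\{\sigma\in\cT:\liminf_{r\to0}\mu(\bar{B}(\sigma,r))/g(r)\ge 1-\eta\}$, which in general carries strictly more $\mu$-mass than $A$ for every $\eta>0$: if $\mu$ has everywhere-positive lower $g$-density and $A$ has small $\mu$-measure, the right-hand side stays bounded away from $\mu(A)$ as $\eta\downarrow 0$. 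So ``letting $\eta\downarrow 0$ gives $\cP_g(A)\le\mu(A)$'' does not follow. You list outer regularity of $\mu$ among the ingredients but never invoke it here, and it is exactly what repairs the step: fix an open $U\supset A$ with $\mu(U)<\mu(A)+\eta'$ and replace $A_m$ by $A_m\cap\{\sigma:d(\sigma,\cT\setminus U)\ge 1/m\}$, still closed and still exhausting $A$. Any $\varepsilon$-packing of this set with $\varepsilon<1/m$ then consists of disjoint closed balls contained in $U$, so the pre-measure bound becomes $\cP^*_g(\,\cdot\,)\le(1-\eta)^{-1}\mu(U)$; letting $m\to\infty$, then $\eta'\to 0$, then $\eta\to 0$ yields $\cP_g(A)\le\mu(A)$. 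This localization inside an approximating open set is the standard device in \cite{TaTr} and \cite{Edgar07}.
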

Points $(iii)$ and $(iv)$ in Euclidian spaces are stated in Lemmas 2 and 3 in Rogers and Taylor \cite{RoTa}. Points $(i)$ and $(ii)$ in Euclidian spaces can be found in Theorem 5.4 in Taylor and Tricot \cite{TaTr}. We refer to Edgar \cite{Edgar07} for a proof of Lemma \ref{genedens} for general metric spaces: For $(i)$ and $(ii)$, see Theorem 4.15 \cite{Edgar07} in combination with Proposition 4.24 \cite{Edgar07}. For $(iii)$ and $(iv)$, see Theorem 5.9 \cite{Edgar07}.  
\begin{remark}
\label{packferm}  Our definition of $g$-packing measures (that is Edgar's definition in \cite{Edgar07} Section 5) is slightly different from that of Taylor and Tricot in \cite{TaTr} who use open balls packing and $g({\rm diam} (\cdot))$ as set function. However, since the gauge function is continuous and since it satisfies a doubling condition, the corresponding packing measure is equivalent to ours.  It only change the bounds in $(iii)$ and $(iv)$ in a obvious way. \cq
\end{remark}
\subsection{Height processes and L\'evy trees.}
\label{Levytreesec}
In this section we recall (mostly from \cite{DuLG} and \cite{DuLG2}) various results concerning stable height processes and stable trees that are used in Sections \ref{estimsec} and in the proof sections. 

\subparagraph{The height process.}
We fix $\gamma \in (1, 2]$. It is convenient to work on the canonical space $\bD ([0, \infty), \bR )$ of cadlag paths equipped with Skorohod distance and the corresponding Borel sigma field. We denote by $X= (X_t , t \geq 0)$ the canonical process and by $\bbP$ the canonical distribution of a $\gamma$-stable and spectrally positive L\'evy process with Laplace exponent $\psi(\lambda)= \lambda^\gamma$. Namely, $\bbE [\exp (-\lambda X_t)  ] = \exp (t \lambda^\gamma )$, for any $\lambda, t \geq 0$. Note that  $X_t$ is integrable and that $\bbE [X_t]= 0$, which easily implies that $X$ oscillates when $t$ goes to infinity. $\bbP$-a.s.$\;$the path $X$ has infinite variation (for more details, see Bertoin \cite{Be} Chapters VII and VIII ).

  In the more general context of spectrally positive L\'evy processes, it has been proved in Le Gall and Le Jan \cite{LGLJ1} and in \cite{DuLG} Chapter 1 that there exists a continuous process $H= (H_t , t \geq 0)$ such that for any $t \geq 0$, the following limit holds in $\bbP$-probability.  
\begin{equation}
\label{Hlimit}
H_t:=\lim_{\varepsilon\to 0} \frac{1}{\varep}\int_0^t {\bf 1}_{\{I^s_t<X_s<I^s_t+\varep\}}\,ds \; , 
\end{equation}
where $I^s_t$ stands for $\inf_{s\leq r\leq t} X_r$. The process $H= (H_t, t \geq 0)$ is called the $\gamma$-{\it stable height process}. As we see below, $H$ provides a way to explore the genealogy of a $\gamma$-stable CSBP. We refer to Le Gall and Le Jan \cite{LGLJ1} for a careful explanation of (\ref{Hlimit}) in the discrete setting. 

  For any $c \in (0, \infty)$, it is easy to prove that $(c^{_{^{-1/\gamma }}}X_{ct}, t \geq 0)$ has the same law as $X$ and we easily derive from (\ref{Hlimit}) that, under $\bbP$, one has 
\begin{equation}
\label{Hscaling}
\big(   c^{-\frac{\gamma -1}{\gamma}} H_{ct} , \, t \geq 0 \big) \overset{{\rm (law)}}{=} (H_t, \, t \geq 0) \; .
\end{equation}

\subparagraph{Excursions of the height process.}
In the Brownian case $\gamma= 2$,  $X$ is distributed as a Brownian motion and (\ref{Hlimit}) easily implies that $H$ is proportional to $X-I$, which is distributed as a reflected Brownian motion. In more general cases, $H$ is neither a Markov process nor a martingale. However it is possible to develop an excursion theory for $H$ as follows.  Recall that $X$ has infinite  variation sample paths. Basic results on fluctuation theory (see \cite{Be} Chapter VI.1 and VII.1) entail that $X-I$ is a strong Markov process in $[0, \infty)$ and that $0$ is regular for 
$(0, \infty)$ and recurrent with repect to this Markov process. Moreover, $-I$ is a local time at $0$ for $X-I$ (see Theorem VII.1 \cite{Be}). Denote by $N_\gamma$ the corresponding excursion 
measure of $X-I$ above $0$ and denote by $(a_j, b_j)$, $j\in  \cI$, the excursion intervals of $X-I$ above $0$ and by $X^j = X_{(a_j + \cdot )\wedge b_j}-I_{a_j}$, $j\in \cI$, the corresponding excursions.  Then, the point measure $\sum_{j\in \cI} \delta_{(-I_{a_j}, X^j)}$ is a Poisson point measure on $[0, \infty)\times \bbD([0, \infty), \bR)$ with intensity 
$dx \otimes N_\gamma (dX)$. Now, observe that (\ref{Hlimit}) implies that the value of $H_t$ only depends of the excursion of $X-I$ straddling $t$ and that $\bigcup_{^{j\in \cI}} (a_j, b_j)= \{ t \geq 0: H_t >0 \}$. 
This allows to define 
$H$ under $N_\gamma$ (see the comments in Section 3.2 \cite{DuLG} for more details). We use the slightly abusive notation $N_\gamma (dH)$ for the "distribution" of $H$ under the excursion measure $N_\gamma $ of $X-I$ above $0$. For any $j\in  \cI$, we set $H^j=H_{(a_j+\cdot )\wedge b_j} $. Then the $H^j$s are the excursions of $H$ above $0$, and the point measure 
\begin{equation}
\label{Poissheight}
\sum_{j\in \cI} \delta_{(-I_{a_j},H^j)}
\end{equation}
is distributed under $\bbP$ as Poisson point measure on $[0, \infty)\times \bD([0,\infty), \bR)$ with intensity $dx \otimes N_\gamma(dH)$. 

  Set $\zeta:=\inf\{t>0:X_t=0\}$ that is the total duration of $X$ under $N_\gamma$. Since $X$ does not drift to $\infty$, the lifetime $\zeta$ is finite $N_\gamma$-a.e. Moreover, $N_\gamma$-a.e.$\; H_0=H_\zeta =0$ and $H_t >0$ for any $t \in (0, \zeta)$. We easily deduce from (\ref{Hscaling}) the following scaling property for $H$ under $N_\gamma$: For any $c \in (0, \infty)$ and for any  measurable function $F: \bbD ([0, \infty), \bR) \rightarrow [0, \infty)$, one has 
\begin{equation}
\label{Hexcscaling}
c^{\frac{1}{\gamma}}N_\gamma \big( \, F( c^{-\frac{\gamma-1}{\gamma}} H_{ct}, \, t \geq 0  )\,   \big)= N_\gamma \big(\,  F(  H_{t}, \, t \geq 0  ) \,  \big) \; .
\end{equation}

\subparagraph{Local times of the height process.} 

We recall here from \cite{DuLG} Chapter 1 Section 1.3 the following result: There exists a jointly measurable process $(L^{_a}_{^s}, a, s\geq 0)$ such that $\bbP$-a.s.$\;$for any $a\geq 0 $, $s \rightarrow L^{_a}_{^s}$ is continuous and non-decreasing and such that 
 \begin{equation}
 \label{approxtpsloc}
 \forall t,  a \geq 0 , \quad  \lim_{\varepsilon \rightarrow 0} \bbE \left[  \sup_{ 0\leq s \leq t} \left| \frac{1}{\varepsilon} \int_0^s dr 
\un_{\{ a< H_r \leq a+\varepsilon \}} -L_s^a \right|  \, \right] =0\; . 
\end{equation}
The process $(L^{_a}_{^s}, s \geq 0)$ is called the {\it $a$-local time of $H$}. Recall that $I$ stands for the infinimum process of $X$. Then, the following properties of local-times of $H$ hold true: First observe that  $L_{^t}^{_0}= -I_t$, $t \geq 0$. Second, the support of the random Stieltjes measure $dL^{_a}_{^{\cdot }}$ is contained in the closed set $\{t \geq 0:H_t=a\}$. Moreover, a general version of the Ray-Knight theorem for $H$ asserts the following: For any $x\geq 0$, set $T_x=\inf \{ t\geq 0 \; :\;
X_t=-x\}$. Then, {\it the process $(L^a_{T_x} \; ;\; a \geq 0)$ is a distributed as a $\gamma$-stable CSBP with initial state $x$}. We refer to Le Gall and Le Jan \cite{LGLJ1} Theorem 4.2 or to \cite{DuLG} Theorem 1.4.1 for a proof of this general version of Ray-Knight Theorem. 

 The CSBP $(L^a_{T_x} \; ;\; a\geq 0)$ admits a cadlag modification that is denoted in the same way to simplify notation. An easy argument deduced from the approximation (\ref{approxtpsloc}) entails that 
$\int_0^a L^b_{T_x}  \, db= \int_0^{T_x}\un_{\{ H_t \leq a\}} dt $. This remark combined with an elementary formula on CSBPs (whose proof can be found in Le Gall \cite{LG99}) entails that  
\begin{equation}
 \label{kappadef}
 \bbE \Big[ \exp \Big(  \! -\!\mu L^a_{T_x} -\lambda \!\! \int_0^{T_x} \un_{\{ H_t \leq a\} } dt  \, \Big) \Big]= \exp \big(- x \kappa_a (\lambda , \mu)\,  \big) \; ,  \quad a, \lambda , \mu \geq 0 ,
\end{equation}   
where $\kappa_a (\lambda , \mu)$ is the unique solution of the following differential equation 
\begin{equation}
\label{equakappa}
\kappa_0 (\lambda, \mu) = \lambda  \quad {\rm and} \quad   \frac{\partial \kappa_a}{\partial a} (\lambda ,\mu ) = \lambda -\kappa_a (\lambda, \mu)^\gamma  \; , \quad a , \lambda , \mu \geq 0 .
\end{equation}
The function $\kappa$ plays an important role and we shall turn back to it further. 

\medskip 

   It is possible to define the local times of $H$ under the excursion measure $N_\gamma$ as follows. For any $b >0$, let us  set $v(b)=N_\gamma ( \sup_{^{t \in [0, \zeta ]}} H_t > b )$. The continuity of $H$ and the Poisson decomposition (\ref{Poissheight}) obviously imply that $v(b) < \infty$, for any $b >0$. 
 It is moreover clear that $v$ is non-increasing and $\lim_{\infty} v= 0 $. For every $a\in (0, \infty)$, we then define a continuous increasing process $(L^a_t,t\in [0, \zeta] )$, such that for every
$b \in (0,\infty)$ and for any $t\geq 0$, one has 
\begin{equation}
\label{localapprox}
\lim_{\varepsilon \rightarrow 0} \, 
N_\gamma \left(  \un_{\{\sup H>b \}}\;\sup_{ 0\leq s \leq t\wedge \zeta} \left| \frac{1}{\varepsilon} \int_0^s
dr 
\un_{\{ a-\varepsilon< H_r \leq a\}} -L_s^a \right| \right) =0.
\end{equation}
See \cite{DuLG} Section 1.3 for more details. The process $(L^a_t,t\in [0, \zeta] )$ is the $a$-local time of the height process. Note that $N_\gamma$-a.e.$\; $the support of the Stieltjes measure $dL^{_a}_{^{\, \cdot}}$ is contained in $\{t:H_t=a\}$.

  Recall notation $(a_j, b_j)$, $j \in \cI$, for the excursion intervals of $H$ above $0$ and set $\zeta_j= b_j-a_j$ that is the the total duration of the excursion $H^j$. One easily deduces from (\ref{localapprox}) that 
$ \mu L^{_a}_{^{T_x}} + \lambda \int_{^0}^{_{{T_x}}}\un_{\{H_t \leq a \}} dt = \sum \mu  (L^{_a}_{^{b_j}}-L^{_a}_{^{a_j}} ) + \lambda 
\int_{^0}^{_{{\zeta_{^j}}}} \un_{\{ H_t^j \leq a \} } dt $, where the sum in the right member is taken over the set indices $j \in \cI$ such that $-I_{a_j} \leq x$. Therefore, (\ref{Poissheight}) entails that 
\begin{equation}
\label{kappaexc}
N_\gamma \left( 1- e^{ -\mu L^a_\zeta - \lambda \int_0^a \un_{\{ H_t \leq a \}} dt} \right) =
 \kappa_a (\lambda, \mu) \; , \quad a , \lambda , \mu , \geq 0 . 
\end{equation}
We refer to \cite{DuLG} Chapter 1 for more details. By taking $\lambda= 0$ in the previous display, we get $N_\gamma( 1- \exp (-\mu L^{_a}_{^\zeta} )\, ) = u(a, \mu)$, where $u$ is given by (\ref{explisolbranch}). This easily entails
\begin{equation}
\label{meanloc}
\forall a \geq 0 \; , \quad N_\gamma (L^a_\zeta )= 1 \; .
\end{equation}
Let us also mention from \cite{DuLG} the following formula  
\begin{equation}
\label{vvvequa}
\forall a>0 \; , \quad v(a)= N_\gamma \big( \sup H_t \geq a \big) = N_\gamma \left( L^a_\zeta \neq 0\right) =  \big( \, (\gamma \!- \!1)a \big)^{-\frac{1}{\gamma -1}} \; .    
 \end{equation}

\subparagraph{L\'evy trees.} 

We first define $\bR$-trees (or real trees) that are metric spaces that generalise graph-trees. 
\begin{definition}
\label{errtreedef} Let $(T, \delta)$ be a metric space. It is a {\it real tree} iff the following holds true for any $\sigma_1, \sigma_1\in T$. 
\begin{description}
\item[{\bf (a)}] \hspace{-4mm} There is a unique isometry 
$f_{\sigma_1,\sigma_2}$ from $[0,\delta(\sigma_1,\sigma_2)]$ into $T$ such
that $f_{\sigma_1,\sigma_2}(0)=\sigma_1$ and $f_{\sigma_1,\sigma_2}(
\delta(\sigma_1,\sigma_2))=\sigma_2$. We denote by  $\lgeo \sigma_1,\sigma_2\rgeo$ the geodesic   joining $\sigma_1$ to $\sigma_2$. Namely, $\lgeo \sigma_1,\sigma_2\rgeo:=f_{\sigma_1,\sigma_2}([0,\delta (\sigma_1,\sigma_2)])$

\medskip

\item[{\bf (b)}] \hspace{-4mm}  If $\inject$ is a continuous injective map from $[0,1]$ into
$T$, such that $\inject(0)=\sigma_1$ and $\inject(1)=\sigma_2$, then  we have
$\inject([0,1])=\lgeo \sigma_1,\sigma_2\rgeo$.

\medskip
\end{description}
A rooted $\bR$-tree is an $\bR$-tree $(T,\delta)$ with a distinguished point ${\rm r}$ called the root.  \qed 
\end{definition}

 Among metric spaces, $\bR$-trees are characterized by the so-called {\it four points inequality} that is expressed as follows. Let $(T, \delta)$ be a connected metric space. Then, $(T, \delta)$ is a $\bR$-tree iff for any $\sigma_1,  \sigma_2,  \sigma_3,  \sigma_4  \in T$, we have 
\begin{equation}
\label{fourpoint}
\delta(\sigma_1, \sigma_2) + \delta(\sigma_3, \sigma_4) \leq \big(\delta(\sigma_1, \sigma_3) + \delta(\sigma_2, \sigma_4)\big) \vee  \big( \delta(\sigma_1, \sigma_4) + \delta(\sigma_2, \sigma_3)  \big) . 
\end{equation}
We refer to Evans \cite{EvStF} or to Dress, Moulton and Terhalle \cite{DMT96} for a detailed account on this property. The set of all compact rooted $\bR$-trees can be equipped with the pointed Gromov-Hausdorff distance in the following way. Let $(T_{1}, \delta_1, {\rm r}_1)$ and $(T_{2}, \delta_2, {\rm r}_1)$ be two compact pointed metric spaces. They can be compared one with each other thanks to 
the pointed Gromov-Hausdorff distance defined by 
$$d_{{\bf GH}}(T_1,T_2)= \inf  \;  \; \delta_{{\bf H}}\big( \, \inject_1(T_1),\inject_2 (T_2) \, \big) \vee \delta \big( \, \inject_1 ({\rm r}_1), \inject_2 ({\rm  r}_2) \, \big) \; . $$  
Here the infimum is taken over all $(\inject_1,\inject_2, (E, \delta))$, where $(E, \delta) $ is a metric space, where $\inject_1 : T_1 \rightarrow E$ and $\inject_2 : T_2 \rightarrow E$ are isometrical embeddings and where $\delta_{{\bf H}}$ stands for the usual Hausdorff metric on compact subsets of $(E, \delta)$. Obviously $d_{{\bf GH}}(T_1, T_2)$ only depends on
the isometry classes of $T_1$ and $T_2$ that map ${\rm r_1}$ to ${\rm r_2}$. In \cite{Gro}, Gromov proves that $d_{{\bf GH}}$ is a metric on the set of the equivalence classes of pointed compact metric spaces that makes it a complete and separable metric space. Let us denote by $\bbT$, the set of all 
equivalence classes of rooted compact real-trees. Evans, Pitman and Winter observed in \cite{EvPitWin} that $\bbT$ is $d_{{\bf GH}}$-closed. Therefore, $(\bbT, d_{{\bf GH}})$ is a complete separable metric space (see Theorem 2 of \cite{EvPitWin}).

  Let us briefly recall how $\bR$-trees can be obtained via continuous functions. We consider a continuous function $h:[0,\infty)\rightarrow \bR$ such that there exists $a \in [0, \infty)$ such that $h$ is constant on $[a, \infty )$. We denote by $\zeta_h$ the least of such real numbers $a$ and we view $\zeta_h$ as the lifetime of $h$. Such a continuous function is said to be a {\it coding function}.    
To avoid trivialities, we also assume that $h$ is not constant. Then, for every $s,t\geq 0$, we set
\begin{equation}
\label{pseudometric}
b_h(s,t)=\inf_{r\in[s\wedge t,s\vee t]}h(r) \quad {\rm and} \quad d_h(s,t)=h(s)+h(t)-2b_h(s,t).
\end{equation}
Clearly $d_h(s,t)=d_h(t,s)$. It is easy to check that $d_h$ satisfies the four points inequality, which implies that $d_h$ is a pseudo-metric.  We then introduce the equivalence relation
$s\sim_h t$ iff $d_h(s,t)=0$ (or equivalently iff $h(s)=h(t)=b_h(s,t)$) and we denote by $T_h$ the quotient set $[0,\zeta_h]/ \sim_h$. Standard arguments imply that $d_h$ induces a metric on $T_h$
that is also denoted by $d_h$ to simplify notation. We denote by
$p_h:[0,\zeta_h]\rightarrow T_h$ the canonical projection. Since $h$ is continuous, $p_h$ is a  continuous function from $[0,\zeta_h ]$ equipped with the usual metric onto $(T_h, d_h) $. This implies that $(T_h, d_h)$ is a compact and connected metric space that satisfies the four points inequality. It is therefore a compact $\bR$-tree. Next observe that for any $t_0, t_1\in [0, \zeta_h]$ such that $h(t_0)=h(t_1) = \min h$, we have $p_h (t_0)= p_h (t_1)$; so it makes sense to define the root of $(T_h, d_h)$ by $\rho_h= p_h (t_0)$. We shall refer to the rooted compact $\bR$-tree $(T_h, d_h, \rho_h)$ as to the {\it tree coded by $h$}.

  We next define {\it the $\gamma$-stable tree} as the tree coded by the $\gamma$-stable height process $(H_t , 0 \leq t \leq \zeta)$ under the excursion measure $N_\gamma$ and to simplify notation we set 
$$ (T_H, d_H, \rho_H )= (\cT, d , \rho) \; .$$
We also set $p= p_H: [0, \zeta] \rightarrow \cT$. Note that $\rho = p(0)$. Since $H_\zeta = 0$ and since $H_t >0$, for any $t \in (0, \zeta)$, $\zeta$ is the only time $t\in [0, \zeta]$ distinct from $0$ such that $p(t)= \rho$.  

\medskip

    Let us denote by $\bar{\cT}$ the root-preserving isometry class of $(\cT, d, \rho)$. It is proved in \cite{DuLG2} that  $\bar{\cT}$ is  measurable in $(\bbT, d_{{\bf GH}})$. We then define $\Theta_\gamma$ as the "distribution" of $\bar{\cT}$ under $N_\gamma$. 
    
\begin{remark}    
We have stated the main results of the paper under $\Theta_\gamma$ because it is more natural and because $\Theta_\gamma$ has an intrinsic characterization as shown by Weill in \cite{Weill}. However,  each time we make explicit computations with stable trees, we have to work with random isometry classes of compact real trees, which causes technical problems (mostly measurability problems). To avoid these unnecessary complications during the intermediate steps of the proofs, {\it we prefer to work with the specific compact rooted real tree $(\cT, d, \rho)$ coded by the $\gamma$-stable height process $H$ under $N_\gamma$ rather than directly work under $\Theta_\gamma$}. So, we prove the results of the paper for $(\cT, d, \rho)$ under $N_\gamma $, which easily implies the same results under $\Theta_\gamma$. \qed
\end{remark}

\subparagraph{The local time measures and the mass measure on $\gamma$-stable trees.}
As above mentioned, we now work with the $\gamma$-stable tree $(\cT, d, \rho)$ coded by $H$ under the excursion measure $N_\gamma$. 
A certain number of definitions and ideas can be extended from graph-trees to real trees such as the degree of a vertex. Namely, for any $\sigma \in \cT$, we denote by $\bn (\sigma )$ the (possibly infinite) number of connected components of the open set $\cT \backslash \{ \sigma \}$. We say that $\bn (\sigma )$ is the {\it  degree} of $\sigma$. Let $\sigma$ be a vertex {\it distinct from the root}. If $\bn (\sigma )= 1$, then we say that $\sigma $ is a {\it leaf} of $\cT$; if $\bn (\sigma ) =2$, then we say that $\sigma $ is a {\it simple point}; if $\bn (\sigma )\geq 3 $, then we say that $\sigma $ is a {\it branching point} of $\cT$. If $\bn (\sigma)= \infty$, we then speak of $\sigma$ as an {\it infinite branching point}. We denote by ${\bf Lf  } (\cT)$ the set leaves of $\cT$, we denote by ${\bf Br}(\cT)$ the set of branching points of $\cT$ and we denote by ${\bf Sk} (\cT)= \cT \backslash{\bf Lf  } (\cT) $ 
the {\it skeleton} of $\cT$. Note that the closure of the skeleton is the whole tree $\overline{{\bf Sk}} (\cT)= \cT$. Let us mention that $H$ is not constant on every non-empty open subinterval of $[0, \zeta]$, $N_\gamma$-a.e. This easily entails the following characterisation of leaves in terms of the height process:  For any $t\in (0, \zeta)$, 
\begin{equation}   
\label{Hleafcarac}
 p(t) \in {\bf Lf} (\cT)  \quad  \Longleftrightarrow \quad  \forall \varepsilon  > 0 \; , \quad \inf_{s \in [t-\varepsilon , t]} \!\! H_s \quad  {\rm and} \quad  \inf_{s \in [ t,  t+\varepsilon ]} \!\! H_s  \;  < \;  H_t \;  . 
\end{equation}
Let us now define the  the mass measure and the local time measures on $\cT$: The {\it mass measure} $\bm$ is the measure induced by the Lebesgue measure $\ell$ on $[0, \zeta]$ via $p$. Namely, for any Borel set $A$ of $\cT$, we have $ \bm (A)= \ell ( p^{-1} (A)) $. We can prove that the mass measure is diffuse and its topological support is clearly $\cT$.
Moreover $\bm$ is supported by the set of leaves: 
 \begin{equation}
\label{massskel}
\bm \big( {\bf Sk} (\cT) \big) = 0 \; . 
\end{equation}
For any $a \in (0, \infty)$, the {\it $a$-local time measure} $\ell^a$ is the measure induced by $dL^a_{\cdot}$ via $p$. Namely, 
$$ \langle \ell^a , f \rangle = \int_0^\zeta dL^a_s f(p (s)) \; , $$
for any positive measurable application $f$ on $\cT$. Let us mention that the topological support of $\ell^a$ is included in  the $a$-level set $\cT (a)= \{ \sigma \in \cT: d(\rho ,\sigma)= a \}$ and note from the definition that the total mass $\langle \ell^a\rangle$ of $\ell^a$ is $L^{_a}_{^\zeta}$. Moreover, observe that $\cT(a)$ is not empty iff $\sup H \geq a$. Then, (\ref{vvvequa}) can be rewritten as follows. 
\begin{equation}
\label{treevvvequa}
\forall \, a>0 , \quad v(a)= N_\gamma \big( \cT(a) \neq \emptyset  \big) = N_\gamma  \left( \ell^a  \neq 0\right) =  \big( \, (\gamma \!- \!1)a \big)^{-\frac{1}{\gamma -1}} \; .    
 \end{equation}
As already mentioned, the $a$-local time measure $\ell^a$ can be defined in a purely metric way by (\ref{geolocapprox}) and there exists a modification of local time measures $(\ell^a, a \geq 0)$ such that 
$a \mapsto \ell^a$ is $N_\gamma$-a.e.$\;$cadlag for the weak topology on the space of finite measures on $\cT$. Except in the Brownian case, $a \mapsto \ell^a$ is not continuous and Theorem 4.7 \cite{DuLG2} asserts that there is a one-to-one correspondence between the times of discontinuity of $a \mapsto \ell^a$, the infinite branching points of $\cT$ and the jumps of the excursion $X$ of the underlying $\gamma$-stable L\'evy process. More precisely, $a$ is a time-discontinuity of $a \mapsto \ell^a$ iff there exists a unique infinite branching point $\sigma_a\in \cT(a)$ such that $\ell^{a-}= \ell^a + \lambda_a \delta_{\sigma_a}$. Moreover, a point $\sigma \in \cT$ is an infinite branching point iff there exists $t \in [0, \zeta]$ such that $p(t)= \sigma$ and $\Delta X_t >0$; if furthermore $\sigma= \sigma_a$, then $\lambda_a = \Delta X_t$. Now, observe that if $\sigma \in \cT(a)$ is an atom of $\ell^a$, the definition (\ref{geolocapprox}) of $\ell^a$ entails that $\sigma$ is an infinite branching point and that $a$ 
is a time-discontinuity of $a \mapsto \ell^a$. Thus, $\sigma= \sigma_a$. Recall that the Ray-Knight theorem for $H$ asserts that 
$a \mapsto \langle \ell^a \rangle $ is distributed as a CSBP (under its excursion measure), which has no fixed time-discontinuity. This (roughly) explains the following. 
\begin{equation}
\label{diffloc}
 \forall \, a >0  , \quad N_\gamma   \; ^{_{_-}} {\rm a.e.} \quad  \ell^a \; {\rm is} \; {\rm diffuse} .   
\end{equation}
We refer to \cite{DuLG2} for more details. 
\subparagraph{The branching property for $H$.}
We now  describe the distribution of excursions of the height process above level $b$ (or equivalently of the corresponding stable tree above level $b$). Let us fix $b\in (0, \infty)$, and denote by $(g^{_b}_{^j}, d^{_b}_{^j})$, $j\in \cI_b$, 
the connected components of the open set $\{t \geq 0:H_t>b\}$. For any $j\in \cI_b$,
denote by $H^{_{b, j}}$ the corresponding excursion of $H$ that defined by $H^{_{b, j}}_s=H_{(g^b_j+s)\wedge d^b_j}-b$, $ s\geq 0$. 

  This decomposition is interpreted in terms of the tree as follows. Recall that $\bar{B} (\rho , b)$ stands for the closed ball with center $\rho$ and radius $b$. Observe that the connected component the open set $\cT \backslash \bar{B} (\rho , b)$ are the subtrees $\tilde{\cT}_{^j}^{_{b, o}}:= p((g^{_b}_{^j}, d^{_b}_{^j}))$, $j \in \cJ_b$. The closure $\cT_{^j}^{_b}$ of $\cT_{^j}^{_{b, o}}$ is simply $\{ \sigma^{_b}_{^j} \} \cup \cT_{^j}^{_{b, o}}$, where $\sigma^{_b}_{^j} = p(g^{_b}_{^j})= p(d^{_b}_{^j})$, that is the points on the $b$-level set $\cT(b)$ at which $\cT^{_{b, o}}_{^j}$ is grafted. Observe that the rooted compact $\bR$-tree $(\cT^{_b}_{^j}, d, \sigma^{_b}_{^j})$ is isometric to the tree coded by $H^{_{^{b, j}}}_{ }$.

  We then define $\tilde{H}^b_s=H_{\tau^b_s}$, where for 
every $s\geq 0$, we have set 
$$ \tau^b_s=\inf  \big\{  t\geq 0:\int_0^t ds\,\un_{\{H_s\leq b\}}>s \big\}.$$
The process $\tilde{H}^b=(\tilde{H}^b_s, s \geq 0)$ is the {\it height process below $b$} and the rooted compact $\bR$-tree 
$(\bar{B} (\rho , b), d, \rho)$ is isometric to the tree coded by $\tilde{H}^b$. Let $\cG_b$ be the 
sigma-field generated
by $\tilde{H}^b $ augmented by the $N_\gamma$-negligible sets. From the approximation 
(\ref{localapprox}), it follows that $L^{_b}_{^\zeta}$ is measurable with respect
to $\cG_b$. We next use the following notation 
\begin{equation}
\label{condiprob}
N_\gamma^{_{_{(b)}}}= N_\gamma (\, \,  \cdot \,\,  | \, \sup H>b)
\end{equation}
that is a probability measure and we define the following point measure on $[0, \infty) \times \bbD ([0, \infty) , \bR)$:
\begin{equation}
\label{branchprop}
\cM_b =  \sum_{^{j\in \cI_b}} \delta_{( L^b_{g^b_j} ,H^{b,j}  )}
\end{equation}
{\it The branching property at level $b$} then asserts that under $N_\gamma^{_{_{(b)}}}$, conditionally given $\cG_b$, $\cM_b $ is distributed as a Poisson point measure with intensity 
$\un_{[0,L^{b}_{\zeta} ]}(x)dx\otimes  N_\gamma (dH)$. We refer to  Proposition 1.3.1 in \cite{DuLG} or the proof of Proposition 4.2.3 \cite{DuLG}. Let us mention that it is possible to rewrite intrinsically the branching property under $\Theta_\gamma$: we refer to Theorem 4.2 \cite{DuLG2} for more details.

\subparagraph{Spinal decomposition at a random time.} 
 We recall another decomposition of the height process (and therefore of the corresponding tree) that is proved in \cite{DuLG} Chapter 2 and in  \cite{DuLG2} under a more explicit form (see also \cite{DuLG4} for further applications). This decomposition is used in a crucial way in the proof of the upper- and lower-density results for the local times measures and the mass measure. Let us introduce an auxiliary probability space $(\Omega, \cF , \bP)$ that is assumed to be 
rich enough to 
carry the various independent random variables we shall need.

  Let $(U_t, t \geq 0)$ be a subordinator defined on $(\Omega, \cF, \bP)$ with initial value $U_0= 0$ and with Laplace exponent $\psi^\prime (\lambda)= \gamma \lambda^{\gamma-1}$, $\lambda \geq 0$. Let 
\begin{equation}
\label{Nstardef}
\cN^* = \sum_{j \in \cI^*} \delta_{ (  r^*_j , \, H^{*j} )  } 
\end{equation}
be a random point measure on $[0, \infty) \times \bbD ([0, \infty), \bR)$ defined on  $(\Omega, \cF, \bP)$ such that a regular version of the law of $\cN^*$ conditionally given $U$ is that of a Poisson point measure with intensity $dU_r \otimes N_\gamma (dH)$. Here $d U_r$ stands for the (random) Stieltjes measure associated with the non-decreasing path $r \mapsto U_r$. For any $ a \in (0, \infty)$, we also set 
\begin{equation}
\label{Nstaradef}
\cN^*_a =  \sum_{j \in \cI^*} \un_{[0, a]} (r^*_j) \, \delta_{ (  r^*_j , \, H^{*j} )  } . 
\end{equation}
We next consider the $\gamma$-height process $H$ under its excursion measure $N_\gamma$. For any $t \geq 0$, we set $\hat{H}^{t}:= ( H_{(t- s)_+}, s \geq 0) $ (here, $(\, \cdot )_+$ stands for the positive part function) and $\check{H}^{t}:= (H_{(t+s)\wedge \zeta} , s \geq 0)$. We also define the random point measure $\cN_t$ on  $[0, \infty) \times \bbD ([0, \infty) , \bR)$ by 
\begin{equation}
\label{spinaldef}
\cN_t =\cN( \hat{H}^{t})+ \cN ( \check{H}^{t}) := \sum_{j \in \cJ_t } \delta_{(r^t_j, H^{*\, t,j} )} \; \, ,  
\end{equation}
where for any continuous function $h: [0, \infty ) \rightarrow [0, \infty)$ with compact support, the point measure $\cN (h)$ is defined as follows: Set $\underline{h} (t)= \inf_{[0, t]} h$ and denote by $(g_i, d_i)$, $i\in \cI(h)$ the excursion intervals of $h-\underline{h} $ away from $0$ that are the connected component of the open set $\{ t \geq 0: h(t)-\underline{h} (t) >0 \}$. For any $i \in \cI (h)$, set 
$h^i (s)  = ((h- \underline{h}) ( (g_i +s) \wedge d_i) \, , \, s \geq 0)$. We then define $\cN(h)$ as the point measure on $[0, \infty) \times \bbD ([0, \infty) , \bR)$ given by 
$$ \cN (h)= \sum_{ i\in \cI (h)} \delta_{(h(g_i) , h^i) } \; .$$
Lemma 3.4 in \cite{DuLG2} asserts the following. For any $a$ and for any nonnegative measurable function $F$ on the set of positive measures on $[0, \infty) \times \bbD ([0, \infty), \bR)$ (equipped with the topology of the vague convergence), one has 
\begin{equation}      
\label{ancdecomp}
N_\gamma \left( \int_{0}^\zeta \!\!\! dL^a_t  \; F \big(  \cN_t  \big)  \right) =  \bE \left[ F (\cN^*_a ) \right] . 
\end{equation}
We shall refer to this identity as to the {\it spinal decomposition of $H$ at a random time}.

\medskip

Let us briefly interpret this decomposition in terms of the $\gamma$-stable tree $\cT$ coded by $H$. Choose $t \in (0, \zeta)$ and set $\sigma = p(t) \in \cT$.  Then the geodesic $\lgeo \rho, \sigma \rgeo$ is interpreted as the ancestral line of $\sigma$. Let us denote by $\cT_{^j}^{_o}$, $j \in \cJ$, the connected components of the open set $\cT   \backslash \lgeo \rho , \sigma \rgeo$ and denote by $\cT_{^{_j}}$ the closure of 
$\cT_{^j}^{_o}$. Then, there exists a point $\sigma_j \in \lgeo \rho , \sigma \rgeo$ such that $\cT_{^{_j}} = \{ \sigma_{^{_j}} \} \cup \cT_{^j}^{_o}$. Recall notation $(r^{t}_{^j} , H^{_{^{*\, t,j}}})$, $j \in \cJ_t$ from (\ref{spinaldef}). The specific coding of $\cT$ by $H$ entails that for any $j \in \cJ$ there exists a unique $j^\prime \in \cJ_{t}$ such that $ d(\rho ,\sigma_{^{_j}})= r^{t}_{^{j^\prime}} $ and such that the rooted compact $\bR$-tree $(\cT_{^{_j}}, d, \sigma_{^{_j}})$ is isometric to the tree coded by $H^{_{^{*\, t,j^\prime}}}$

\medskip

 We now compute $\bm (\bar{B}(p(t), r))$ in terms of  $\cN_t$ as follows. First, recall from (\ref{pseudometric}) the definition of $b(s,t) $ and $d(s, t)$. Note that if $H_s= b(s,t)$ with $s \neq t$, then $p(s) \in {\bf Sk} (\cT)$ by (\ref{Hleafcarac}). Let us fix a radius $r$ in $(0, H_t)$. Then, (\ref{massskel}) entails 
$$ \bm \big( \bar{B} (p(t) , r  ) \big) = \int_{0}^\zeta \un_{\{ d(s,t) \leq  r \}} ds = \int_0^\zeta 
\un_{\{ 0 < H_s  - b(s,t ) \leq r -  H_t+b(s,t)     \}} . $$
The definition of  $(\cN(\hat{H}^t), \cN(\check{H}^t))$ entails 
\begin{equation}
\label{massballfixt}   
\bm \big( \overline{B}(p(t) , r  ) \big) = \sum_{j \in \cJ_t} \un_{[ H_t-r \, , \,  H_t]} (r^t_j)\cdot  \int_0^{\zeta^t_j} \un_{\{ H^{*\, t,j}_s \leq r-H_t +r^t_j\}} \, ds , 
\end{equation}
where $\zeta^t_j$ stands for the lifetime of the path $H^{*\,  t,j}$. For any $a \in (0, \infty)$ and for any $r \in [0,a]$, we next set 
\begin{equation}
\label{Mstardef}
M^*_r (a) = \sum_{j \in \cI^*} \un_{[a-r, a]} (r^*_j) \cdot \int_{0}^{\zeta^*_j} \un_{\{ H^{*j}_s \leq r-a+r^*_j  \}} ds  \; , 
 \end{equation}    
where $\zeta^*_j$ stands for the lifetime of the path $H^{*j}$. Then, $(M^*_r (a) , r \in [0 ,a])$ is a cadlag increasing process defined on $(\Omega, \cF, \bP)$. The spinal decomposition (\ref{ancdecomp}) implies that for any $a \in (0, \infty)$ and for any bounded measurable $F: \bbD ([0, a ] , \bR) \rightarrow \bR$, we have 
$$ N_\gamma \left( \int_0^\zeta \!\!\! dL^a_t  \, F \! \left(  \big( \bm( \bar{B}(p(t) , r)) \, \big)_{r \in [0, a]}  \right) \right)  \
= \bE \left[  F \big( (M^*_r (a) )_{r \in [0, a ] } \big)\right] .$$
Since the $a$-local time measure is the image measure of $dL^a_\cdot $ by the canonical projection $p$, we get 
\begin{equation}
\label{keymass}
N_\gamma \left( \int_{\cT} \!\!\! \ell^a(d\sigma) \, F  \! \left(  \, \big(  \bm ( \bar{B}(\sigma , r)) \, \big)_{r \in [0, a]}  \right)  \right)=  
\bE \left[  F \big( \, (M^*_r (a))_{r \in [0, a ] } \big) \right] . 
\end{equation}
This identity is used in the proof of Proposition \ref{Htestmass}.

  Let us discuss a similar formula for the $a$-local time measure: Let $t \in [0, \zeta ]$ be such that $H_t= a$. Namely $p(t)\in \cT(a)$. We fix $r \in (0, 2a)$. Then observe that for any $s \in [0,\zeta ]$ such that  $H_s = a $, we have $d(s, t) \leq r$ iff $b(s, t) \geq a-(r/2)$. We then get 
\begin{equation}
\label{linloc}
 \ell^a \big( \overline{B}(p(t) , r  ) \big) = \ell^a (\{ p(t) \} ) + \sum_{j \in \cJ_t} \un_{[ a-\frac{r}{2} \, , \, a)} (r^t_j) \, 
L^{a-r^t_j}_{\zeta^t_j} (t,j)  \; , 
\end{equation}
where $L^{_{a-r^t_j}}_{^{\zeta^t_j}} (t,j) $ stands for the local time at level $a-r^t_j$ of the excursion $H^{* t, j}$. Next, for any $a \geq 0$ and any $ r \in (0, 2a)$, we set 
\begin{equation}
\label{Lstardef}
L^*_r (a) = \sum_{j \in \cI^*} \un_{[a-\frac{r}{2} \, , \,  a]} (r^*_j) \, L_{\zeta^*_j}^{a-r^*_j}   ds  \; , 
\end{equation}  
where, $L_{^{\zeta^*_j}}^{_{a-r^*_j}}$ stands for the local time at level $a-r^*_j$ of the excursion $H^{*  j}$. 

  Now (\ref{diffloc}) entails that $\ell^a (\{ p(t) \} ) =0$, $N_\gamma$-a.e.$\;$and 
(\ref{linloc}) combined with the spinal decomposition (\ref{ancdecomp}) implies that for any $a \in (0, \infty)$ and for any bounded measurable $F: \bbD ([0, a ] , \bR) \rightarrow \bR$, we have 
\begin{equation}
\label{keyloc}
N_\gamma \left( \int_{\cT} \!\!\! \ell^a(d\sigma) \, F  \! \left(  \, \big(  \ell^a ( \bar{B}(\sigma , r)) \, 
\big)_{r \in [0, 2a]}  \right)  \right)=  
\bE \left[  F \big( \, (L^*_r (a) )_{r \in [0,2a ] } \big) \right]
\end{equation}
This identity is used to prove Theorem \ref{packlevel} and Proposition \ref{Htestlv}.

\subsection{Estimates.}
\label{estimsec}
Let us fix $ a>0$. Recall the definition of the $a$-local time measure $\ell^a$ (whose total mass $\langle \ell^a\rangle $ is equal to $L^a_\zeta$) and recall that 
\begin{equation}
\label{recallbranch}
N_\gamma \big(1-e^{-\lambda \langle \ell^a\rangle } \big)=  N_\gamma \big(1-e^{-\lambda L^a_\zeta } \big)= u(a, \lambda) =  \left( (\gamma\! -\!1) a  +\frac{_1}{^{\lambda^{\gamma -1}}}  \right)^{-\frac{1}{\gamma-1}}\, .
\end{equation}
Next recall from (\ref{condiprob}) the definition of $N^{_{(a)}}_\gamma $. We easily deduce from (\ref{vvvequa}) and (\ref{recallbranch}) that 
\begin{equation}
\label{Zolotalaw}
N^{_{(a)}}_\gamma\! \big( \exp (- \lambda  \langle \ell^a\rangle ) \, \big)= 1- \Big( \frac{( \gamma \! -\! 1) a \lambda^{\gamma -1}}{1+ ( \gamma \! -\!  1) a \lambda^{\gamma -1} }\Big)^{\frac{1}{\gamma-1}} \; .
\end{equation}
Consequently, 
\begin{equation}
\label{scalingZolo}
a^{-\frac{1}{\gamma -1}} \langle \ell^a \rangle \; \,  {\rm under} \; N^{_{(a)}}_\gamma \; \overset{{\rm (law)}}{=} \; \langle \ell^1 \rangle   \; \,  {\rm under} \; N^{_{(1)}}_\gamma \; .
\end{equation}
\begin{lemma}
\label{taillevdirect} For any $\gamma \in (1, 2]$, we have 
$$ N^{_{(1)}}_\gamma \!  \big(  \langle \ell_1  \rangle  \leq  \; x \,  \big) \; \sim_{x \rightarrow 0+} \;  \frac{x^{\gamma -1}}{(\gamma\!-\!1)^2 \Gamma (\gamma) }  \; .$$
\end{lemma}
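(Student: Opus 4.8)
The plan is to extract the small-$x$ asymptotics of the distribution function of $\langle \ell^1 \rangle$ under $N^{(1)}_\gamma$ directly from the explicit Laplace transform \eqref{Zolotalaw}, using a Tauberian argument. First I would rewrite \eqref{Zolotalaw} in a form suited to the regime $\lambda \to +\infty$, which is dual to $x \to 0+$ for the distribution function near the origin. Setting $G(x) = N^{(1)}_\gamma(\langle \ell^1 \rangle \le x)$, the quantity $\int_{[0,\infty)} e^{-\lambda x}\, dG(x) = N^{(1)}_\gamma(e^{-\lambda \langle \ell^1\rangle})$ equals $1 - \bigl(\tfrac{(\gamma-1)\lambda^{\gamma-1}}{1+(\gamma-1)\lambda^{\gamma-1}}\bigr)^{1/(\gamma-1)}$. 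As $\lambda \to \infty$ the bracket tends to $1$ and one has the expansion $\bigl(1 - \tfrac{1}{1+(\gamma-1)\lambda^{\gamma-1}}\bigr)^{1/(\gamma-1)} = 1 - \tfrac{1}{\gamma-1}\cdot\tfrac{1}{(\gamma-1)\lambda^{\gamma-1}} + o(\lambda^{-(\gamma-1)})$, so that
\begin{equation}
\label{Laplaceasymp}
\int_{[0,\infty)} e^{-\lambda x}\, dG(x) \; \sim_{\lambda \to \infty} \; \frac{1}{(\gamma-1)^2}\, \lambda^{-(\gamma-1)} \; .
\end{equation}

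Next I would invoke the Tauberian theorem of Karamata for Laplace–Stieltjes transforms (see Feller, Vol.~II, or Bingham–Goldie–Teugels): since $\gamma - 1 \in (0,1]$, the relation \eqref{Laplaceasymp} with a regularly varying right-hand side of index $-(\gamma-1)$ is equivalent to
\begin{equation*}
G(x) \; \sim_{x \to 0+} \; \frac{1}{(\gamma-1)^2\,\Gamma(\gamma)}\, x^{\gamma-1} \; ,
\end{equation*}
because the Tauberian constant is $1/\Gamma((\gamma-1)+1) = 1/\Gamma(\gamma)$. This is exactly the claimed equivalence. The monotonicity of $G$ (it is a distribution function) is what licenses the Tauberian implication in this direction; no extra regularity is needed.

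A small technical point to address is that $G$ may not be a genuine probability distribution function charging only $[0,\infty)$ without an atom at $0$: one should note that $N^{(1)}_\gamma(\langle \ell^1 \rangle = 0) = 0$ since on $\{\sup H > 1\}$ one has $\ell^1 \neq 0$ by \eqref{treevvvequa} (equivalently \eqref{vvvequa}), so $G(0) = 0$ and $G$ is a bona fide probability distribution function on $(0,\infty)$; this makes the Laplace–Stieltjes transform in \eqref{Laplaceasymp} unambiguous and the Tauberian theorem directly applicable. Alternatively, and perhaps more transparently, one can avoid the Tauberian theorem entirely: \eqref{Zolotalaw} identifies the law of $\langle \ell^1\rangle$ under $N^{(1)}_\gamma$ with an explicit Zolotarev-type (generalized Mittag-Leffler) distribution, whose density near $0$ can be read off from the known series expansion of its density; matching the leading term gives the same constant.

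The main obstacle is essentially bookkeeping with constants rather than anything conceptual: one must be careful that the expansion of $\bigl(\tfrac{(\gamma-1)\lambda^{\gamma-1}}{1+(\gamma-1)\lambda^{\gamma-1}}\bigr)^{1/(\gamma-1)}$ is carried to the correct order and that the Tauberian constant $1/\Gamma(\gamma)$ (not $1/\Gamma(\gamma-1)$, nor $1/(\gamma-1)!$ type slips) is used, noting that the index of regular variation of the transform is $-(\gamma-1)$ so the Gamma argument is $(\gamma-1)+1 = \gamma$. One should also double-check the boundary case $\gamma = 2$, where $\gamma - 1 = 1$ and the transform is $\sim \tfrac14 \lambda^{-1}$, giving $G(x) \sim x/(1\cdot\Gamma(2)) \cdot \tfrac14 = x/4$, consistent with the exponential-type tail one expects for the Feller-diffusion local time; this is a useful sanity check that the general formula specializes correctly.
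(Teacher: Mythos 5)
Your argument matches the paper's proof: derive the $\lambda\to\infty$ expansion of the Laplace transform from \eqref{Zolotalaw} to get $N^{_{(1)}}_\gamma(e^{-\lambda\langle\ell^1\rangle})\sim (\gamma-1)^{-2}\lambda^{-(\gamma-1)}$, then apply Feller's Tauberian theorem with Tauberian constant $1/\Gamma(\gamma)$. One small correction to your sanity check at $\gamma=2$: from \eqref{Zolotalaw} the transform is $1-\frac{\lambda}{1+\lambda}=\frac{1}{1+\lambda}\sim\lambda^{-1}$, with coefficient $1/(\gamma-1)^2=1$ (not $1/4$), so the Tauberian gives $G(x)\sim x$, which is exactly what the stated formula $x^{\gamma-1}/((\gamma-1)^2\Gamma(\gamma))$ yields at $\gamma=2$.
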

\begin{proof}
From (\ref{Zolotalaw}), we get 
$$ N^{_{(1)}}_\gamma \!  \big( \exp ( -\lambda \langle \ell_1 \rangle ) \, \big) 
\;  \sim_{\lambda \rightarrow \infty} \, \frac{ \lambda^{-(\gamma -1)}}{(\gamma-1)^2} \; .$$
The desired result is then a direct consequence of a Tauberian theorem due to Feller: see \cite{Fe} Chapter XIII § 5 (see also \cite{BiGoTe} Theorem 1.7.1', p. 38 ). \qed
\end{proof}

  Recall the notation $\cN^*$ and the definition of $L^*_r (a)$ from (\ref{Lstardef}). For any $0 \leq r^\prime \leq r \leq 2a $, we set 
\begin{equation}
\label{shelldef}
 \Lambda_{r^\prime , r} (a)=\sum_{^{j \in \cI^*}} \un_{[a-\frac{r}{2} \, , \, a-\frac{r^{_\prime}}{2} \, )} \! (r^*_j) \; L_{^{\zeta^*_j}}^{_{a-r^*_j}}     \; .
\end{equation}
Observe that 
\begin{equation}
\label{increLstar}
\forall \, 0 \leq r^\prime \leq r \leq 2a \; , \quad L^*_r (a)= \Lambda_{r^\prime , r} (a) + L^*_{r^\prime} (a) . 
\end{equation}
\begin{lemma}
\label{couronne} Let $(r_n, n \geq 0)$ be a sequence such that $0 <r_{n+1} \leq r_n\leq 2a$ and $\lim_n r_n = 0$. Then, the random variables $(\Lambda_{r_{n+1}, r_n } (a), n \geq 0)$ are independent and 
\begin{equation}
\label{coucr}
L^*_{r_0} (a)= \sum_{n \geq 0} \Lambda_{r_{n+1}, r_n } (a) \; .
\end{equation}
\end{lemma}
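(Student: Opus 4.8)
The plan is to exploit the Poissonian structure of $\cN^*$ underlying the definition of $L^*_r(a)$ in \eqref{Lstardef} and of $\Lambda_{r',r}(a)$ in \eqref{shelldef}. Recall that, conditionally given the subordinator $U$, the point measure $\cN^* = \sum_{j \in \cI^*} \delta_{(r^*_j, H^{*j})}$ is Poisson with intensity $dU_r \otimes N_\gamma(dH)$ on $[0,\infty) \times \bbD([0,\infty),\bR)$. The key observation is that the random variable $\Lambda_{r_{n+1},r_n}(a)$ depends on the atoms $(r^*_j, H^{*j})$ of $\cN^*$ only through those with first coordinate $r^*_j$ lying in the interval $[a - \frac{r_n}{2}, a - \frac{r_{n+1}}{2})$; more precisely, $\Lambda_{r_{n+1},r_n}(a)$ is a measurable function of the restriction of $\cN^*$ to the strip $S_n := [a - \tfrac{r_n}{2}, a - \tfrac{r_{n+1}}{2}) \times \bbD([0,\infty),\bR)$. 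Since $0 < r_{n+1} \le r_n \le 2a$ and $\lim_n r_n = 0$, the intervals $[a - \frac{r_n}{2}, a - \frac{r_{n+1}}{2})$ for $n \ge 0$ are pairwise disjoint subintervals of $[0,a)$ (they are consecutive and non-overlapping, increasing towards $a$), hence the strips $S_n$ are pairwise disjoint.

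First I would establish the decomposition \eqref{coucr}. This follows by iterating \eqref{increLstar}: writing $L^*_{r_0}(a) = \Lambda_{r_1,r_0}(a) + L^*_{r_1}(a) = \dots = \sum_{n=0}^{N-1} \Lambda_{r_{n+1},r_n}(a) + L^*_{r_N}(a)$, it remains to check that $L^*_{r_N}(a) \to 0$ as $N \to \infty$. Since $r_N \downarrow 0$, the set of indices $j$ with $r^*_j \in [a - \frac{r_N}{2}, a]$ shrinks, and because $\cN^*$ restricted to $[0,a] \times \bbD$ has (conditionally on $U$) finite total intensity $U_a < \infty$ for each atom to contribute, only finitely many atoms have $r^*_j$ in a neighbourhood of $a$; the definition of $L^*_r(a)$ shows $L^*_{r_N}(a)$ is a finite sum over $\{j : r^*_j \in [a - r_N/2, a]\}$, and as $r_N \downarrow 0$ this sum tends to the contribution of atoms with $r^*_j = a$, which is a.s. empty since $U$ has no atom at the deterministic point $a$ and $N_\gamma$ is non-atomic on excursions — so $L^*_{r_N}(a) \to 0$ a.s. (Alternatively, one notes $r \mapsto L^*_r(a)$ is cadlag increasing and left-continuous at $0^+$ with value $0$.)

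Next I would establish the independence. Conditionally on $U$, the restrictions $\cN^*|_{S_n}$, $n \ge 0$, of the Poisson point measure $\cN^*$ to the pairwise disjoint strips $S_n$ are independent Poisson point measures (the standard restriction/independence property of Poisson random measures over disjoint sets). Hence, conditionally on $U$, the random variables $\Lambda_{r_{n+1},r_n}(a)$, $n \ge 0$, being measurable functions of the respective $\cN^*|_{S_n}$, are conditionally independent. To obtain unconditional independence I would then argue that the conditional law of $\Lambda_{r_{n+1},r_n}(a)$ given $U$ depends on $U$ only through the increment $U_{a - r_{n+1}/2} - U_{a - r_n/2}$ of $U$ over $S_n$'s base interval (because the conditional intensity measure restricted to $S_n$ is $\un_{[a - r_n/2, a - r_{n+1}/2)}(r)\,dU_r \otimes N_\gamma(dH)$, which is determined by that increment together with the increment's internal structure). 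Since $U$ is a subordinator, its increments over the disjoint intervals $[a - r_n/2, a - r_{n+1}/2)$ are independent; combining independence of these increments with the conditional independence above yields, by a routine conditioning argument (computing the joint Laplace/characteristic functional as a product), that the $\Lambda_{r_{n+1},r_n}(a)$ are unconditionally independent.

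The main obstacle I anticipate is the bookkeeping in the last step: one must be careful that "the conditional law given $U$ depends only on the relevant increment" is stated and used correctly, and that the passage from conditional independence plus independent increments to full independence is done rigorously (e.g.\ via a functional identity $\bE[\prod_n f_n(\Lambda_{r_{n+1},r_n}(a))] = \prod_n \bE[f_n(\Lambda_{r_{n+1},r_n}(a))]$, obtained by first conditioning on $U$, using the Poisson restriction property to factor the conditional expectation, and then using independence of the increments of $U$ to factor the remaining expectation over $U$). The convergence claim $L^*_{r_N}(a) \to 0$ is essentially immediate once one observes finiteness of the number of contributing atoms near level $a$, so the decomposition \eqref{coucr} itself is the easy part.
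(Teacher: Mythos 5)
Your proof is correct and follows essentially the same route as the paper's: conditional independence of the $\Lambda_{r_{n+1},r_n}(a)$'s given $U$ from the restriction property of Poisson point measures over the disjoint strips, followed by deconditioning via the independence of increments of the subordinator $U$. You spell out slightly more detail than the paper (the vanishing of $L^*_{r_N}(a)$ and the joint Laplace-transform factorization), but the key ideas are identical.
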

\begin{proof} 
First, note that (\ref{coucr}) is a direct consequence of the definitions of $\Lambda_{r^\prime , r} (a)$ and of $L^*_r (a)$. Let us prove the independence property. Recall that conditionally given $U$, $\cN^*$ is a Poisson point process with intensity $dU_t \otimes N_\gamma (dH)$.  Elementary properties of Poisson point processes and the definition of the $\Lambda_{r_{n+1}, r_n } (a)$s entail that the random variables $(\Lambda_{r_{n+1}, r_n } (a),n \geq 0)$ are independent conditionally given $U$. Moreover, the conditional distribution of $\Lambda_{r_{n+1}, r_n } (a)$ given $U$ only involves the increments of $U$ on $[r_{n+1}, r_n]$, which easily implies the desired result since $U$ is a subordinator.  \qed 
\end{proof}

\begin{remark}
\label{utile} The previous lemma and (\ref{increLstar}) imply that for any $0 \leq r^\prime \leq r \leq 2a$, one has $L^*_r (a) \geq  \Lambda_{r^\prime , r} (a)$ and that $ \Lambda_{r^\prime , r} (a)$ is independent of $L^*_{r^\prime} (a)$. Observe also that the process $r \mapsto L^*_r (a)$ has independent increments. \cq 
\end{remark}

\begin{lemma}
\label{Lambstar} For any $0 \leq r^\prime \leq r \leq 2a$, we have 
\begin{equation}
\label{Lapcour}
\bE \big[ \exp (-\lambda  \Lambda_{r^\prime , r} (a) ) \big]=  \left(  \frac{\frac{_{\gamma-1}}{^2} r^\prime  \lambda^{\gamma -1}+ 1}{\frac{_{\gamma-1}}{^2} r  \lambda^{\gamma -1}+ 1} 
 \right)^{\frac{\gamma }{\gamma -1}} \; .
\end{equation}
Consequently, we get 
$$ r^{-\frac{1}{\gamma-1}}   \Lambda_{r^\prime , r} (a)  \overset{{\rm (law)}}{=}   \Lambda_{\frac{r^\prime}{r} , 1} (1)  \; .$$ 
\end{lemma}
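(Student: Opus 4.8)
The plan is to compute the Laplace transform of $\Lambda_{r',r}(a)$ directly from its definition (\ref{shelldef}) using the Poisson structure of $\cN^*$, and then read off the scaling identity as an immediate algebraic consequence. First I would recall that, conditionally on the subordinator $U$, the point measure $\cN^* = \sum_{j\in\cI^*}\delta_{(r^*_j,H^{*j})}$ is Poisson with intensity $dU_t\otimes N_\gamma(dH)$. By the exponential formula for Poisson point processes,
\begin{equation}
\bE\big[\exp(-\lambda\Lambda_{r',r}(a))\,\big|\,U\big]
= \exp\!\Big(-\!\int_{(a-\frac r2,\,a-\frac{r'}2]}\!\! dU_t\; N_\gamma\big(1-e^{-\lambda L^{a-t}_\zeta}\big)\Big),
\end{equation}
since each atom $(r^*_j,H^{*j})$ with $r^*_j\in[a-\frac r2,a-\frac{r'}2)$ contributes the local time $L^{a-r^*_j}_{\zeta^*_j}$ of its excursion at level $a-r^*_j$, and distinct atoms are independent. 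Now I would invoke (\ref{recallbranch}) (equivalently, taking $\lambda=0$ in (\ref{kappaexc}) and using (\ref{explisolbranch})): $N_\gamma(1-e^{-\lambda L^b_\zeta}) = u(b,\lambda) = \big((\gamma-1)b + \lambda^{-(\gamma-1)}\big)^{-1/(\gamma-1)}$, so with $b=a-t$ the inner integrand depends on $t$ in an explicit way.

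Next I would take expectations over $U$. Since $U$ is a subordinator with Laplace exponent $\psi'(\mu)=\gamma\mu^{\gamma-1}$, and the integral above is over the deterministic interval $(a-\frac r2,\,a-\frac{r'}2]$ of length $\frac{r-r'}{2}$, I would use the standard formula $\bE[\exp(-\int_0^s f(t)\,dU_t)] = \exp(-\int_0^s \psi'(f(t))\,dt)$ for deterministic $f$ — more precisely its translated version on the interval in question. After the change of variables $b=a-t$, the exponent becomes $\int_{r'/2}^{r/2}\psi'\big(u(b,\lambda)\big)\,db = \gamma\int_{r'/2}^{r/2} u(b,\lambda)^{\gamma-1}\,db$. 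Since $u(b,\lambda)^{\gamma-1} = \big((\gamma-1)b+\lambda^{-(\gamma-1)}\big)^{-1}$, this integral is an elementary logarithm: $\gamma\int_{r'/2}^{r/2}\frac{db}{(\gamma-1)b+\lambda^{-(\gamma-1)}} = \frac{\gamma}{\gamma-1}\big[\log\big((\gamma-1)\tfrac r2+\lambda^{-(\gamma-1)}\big) - \log\big((\gamma-1)\tfrac{r'}2+\lambda^{-(\gamma-1)}\big)\big]$. Exponentiating and multiplying numerator and denominator by $\lambda^{\gamma-1}$ yields exactly (\ref{Lapcour}). The scaling identity $r^{-1/(\gamma-1)}\Lambda_{r',r}(a)\overset{(\mathrm{law})}{=}\Lambda_{r'/r,1}(1)$ then follows by substituting $\lambda\mapsto r^{-1/(\gamma-1)}\lambda$ into (\ref{Lapcour}) and observing that the right-hand side becomes the Laplace transform of $\Lambda_{r'/r,1}(1)$; note the independence of $a$ is manifest since $a$ never appears in the final expression.

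I do not anticipate a genuine obstacle here — the lemma is essentially a computation — but the one point requiring care is the justification of the conditional exponential formula and the subsequent $\psi'$-integration: one must make sure the intensity $dU_t\otimes N_\gamma(dH)$ is handled correctly on the half-open interval $[a-\frac r2, a-\frac{r'}2)$, that $N_\gamma$ is a $\sigma$-finite (infinite) measure so the "$1-e^{-\cdot}$" truncation in the exponential formula is genuinely needed, and that $u(b,\lambda)$ is finite for $b>0$, which it is by (\ref{explisolbranch}). One should also check the edge case $r'=0$ (then $L^*_0(a)=0$ and the formula gives $1$ at $\lambda$ fixed as $r'\to 0$... actually gives the stated transform of $L^*_r(a)$ itself, consistent with (\ref{increLstar})). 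These are all routine measure-theoretic verifications using the Poisson structure already set up in the excerpt around (\ref{Nstardef})–(\ref{Lstardef}).
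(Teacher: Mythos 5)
Your proof follows the paper's argument exactly: condition on $U$, apply the Poisson exponential formula to get an integral of $N_\gamma(1-e^{-\lambda L^{a-t}_\zeta}) = u(a-t,\lambda)$ against $dU_t$, integrate out $U$ using its Laplace exponent $\gamma\lambda^{\gamma-1}$, and evaluate the resulting elementary integral after a change of variables; the scaling identity is then read off from the explicit Laplace transform. You spell out the logarithm computation that the paper leaves as "a simple change of variable," but the route is the same. (One small typo: in your first display you wrote the interval as $(a-\tfrac r2, a-\tfrac{r'}2]$, whereas (\ref{shelldef}) uses $[a-\tfrac r2, a-\tfrac{r'}2)$ — immaterial after integration, and you state the correct convention later.)
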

\begin{proof}
First observe that the second point is an immediate consequence of the first one. Recall that conditionally given $U$, $\cN^*$ is distributed as a Poisson point process with intensity $dU_t \otimes N_\gamma$. Therefore, 
$$ \bE \left[ \exp \big(- \lambda  \Lambda_{r^\prime , r} (a)  \big) \, | \, U \, \right]= \exp \Big(- \int_{[a-r/2, a-r^\prime /2\, )} \!\!\!\!\!\!\! \!\!\!\!\!\!\! \!\!\!\!\!\!\! \!\!\!\!\!\!\!  dU_{^{_t}} \,  N_{^{_{_\gamma}}} \big( 1-e^{-\lambda L^{a-t}_\zeta } \big) \Big) . $$ 
Recall that $u(a-t, \lambda) = N_\gamma \big( 1-e^{-\lambda L^{a-t}_\zeta } \big) $, where $u$ is given by (\ref{explisolbranch}) and recall that $U$ is a subordinator with Laplace exponent $\lambda \mapsto \gamma \lambda^{\gamma -1}$. Thus, 
$$ \bE \left[ \exp \big(- \lambda  \Lambda_{r^\prime , r} (a)  \big) \right]= \exp \Big(- \gamma  \int_{a-r/2}^{ a-r^\prime /2\, }  \!\!\!\!\!\!\!  u(a-t, \lambda)^{\gamma -1}dt   \Big) , $$ 
which entails the desired result thanks to a simple change of variable. \qed 
\end{proof}

\medskip

\noi
Taking $r^\prime= 0$ in the previous lemma entails the following. 
\begin{lemma}
\label{Lstar} For any $a \in (0, \infty)$ and for any $r \in [0, 2a]$, we have 
$$ \bE \big[ \exp (-\lambda L^*_r (a) ) \big]= \big( 1+ \frac{_{\gamma-1}}{^2}  r \lambda^{\gamma -1}  \big)^{-\frac{\gamma}{\gamma-1}} \; .$$
Then, $ r^{{-1/(\gamma-1)}}L^*_r (a)$ has the same law as $L^*_1 (1) $. 
\end{lemma}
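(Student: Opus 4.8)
Lemma \ref{Lstar} is an immediate corollary of Lemma \ref{Lambstar}, obtained by specialising $r' = 0$. Here is the plan.

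The plan is to simply set $r^\prime = 0$ in formula (\ref{Lapcour}) of Lemma \ref{Lambstar}. By definition (\ref{shelldef}), we have $\Lambda_{0, r}(a) = \sum_{j \in \cI^*} \un_{[a - r/2,\, a)}(r^*_j)\, L^{a - r^*_j}_{\zeta^*_j}$, and comparing with the definition (\ref{Lstardef}) of $L^*_r(a)$, we see that $\Lambda_{0, r}(a) = L^*_r(a)$ (the half-open versus closed interval at the endpoint $a$ contributes a $\bP$-null set of atoms $r^*_j = a$, since the subordinator $U$ has at most countably many jumps and no fixed jump times). Substituting $r^\prime = 0$ into (\ref{Lapcour}), the numerator $\frac{\gamma - 1}{2} r^\prime \lambda^{\gamma - 1} + 1$ becomes $1$, and we are left with
\begin{equation*}
\bE\big[ \exp(-\lambda L^*_r(a)) \big] = \Big( \tfrac{\gamma - 1}{2} r \lambda^{\gamma - 1} + 1 \Big)^{-\frac{\gamma}{\gamma - 1}} \; ,
\end{equation*}
which is exactly the claimed Laplace transform.

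For the second assertion, I would observe that the Laplace transform on the right-hand side depends on $r$ and $\lambda$ only through the product $r \lambda^{\gamma - 1}$. Hence, replacing $\lambda$ by $r^{-1/(\gamma-1)} \mu$ gives $\bE[\exp(-r^{-1/(\gamma-1)} \mu L^*_r(a))] = (\frac{\gamma-1}{2} \mu^{\gamma-1} + 1)^{-\gamma/(\gamma-1)}$, which is precisely $\bE[\exp(-\mu L^*_1(1))]$. Since the Laplace transform determines the law of a nonnegative random variable, $r^{-1/(\gamma-1)} L^*_r(a) \overset{\rm (law)}{=} L^*_1(1)$. Note in particular that the law does not depend on $a$.

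There is no real obstacle here: the lemma is a one-line specialisation of the preceding one. The only minor point requiring a word of justification is the identification $\Lambda_{0,r}(a) = L^*_r(a)$ up to the treatment of the endpoint $r^*_j = a$, and this is harmless because $\bP$-a.s. the subordinator $U$ places no mass at the deterministic point $a$, so $\cN^*$ a.s. has no atom with first coordinate equal to $a$. Alternatively, one can simply take $r^\prime = 0$ directly in the integral computation inside the proof of Lemma \ref{Lambstar}, where the lower limit $a - r^\prime/2$ becomes $a$ and the change of variable goes through verbatim.
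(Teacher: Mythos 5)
Your proof is correct and is exactly the route the paper takes: the paper introduces Lemma \ref{Lstar} with the single sentence "Taking $r^\prime = 0$ in the previous lemma entails the following," i.e.\ specialisation of (\ref{Lapcour}) with $r'=0$, together with the scaling observation you make. Your additional remark about the endpoint $r^*_j = a$ being a $\bP$-null event (since $dU$ has no fixed atom at $a$) is a sound and worthwhile clarification of the implicit identification $\Lambda_{0,r}(a) = L^*_r(a)$.
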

To simplify notation, we set 
\begin{equation}
\label{Zprimedef}
Z_\gamma := L^*_{ 1} (1)  \quad {\rm and} \quad Z^\prime_\gamma := \Lambda_{\frac{1}{2} , 1} \; .
\end{equation}
\begin{proposition}
\label{levelestimates} We have the following estimates. 
\begin{itemize} 
\item{(i)} For $\gamma \in (1, 2)$, we have 
$$ \lim_{x \rightarrow \infty} x^{\gamma -1} \bP (Z_\gamma \geq x ) =  2 \lim_{x \rightarrow \infty} x^{\gamma -1} \bP (Z'_\gamma   \geq x ) =  \frac{\gamma}{2\Gamma (2-\gamma) } .$$
\item{(ii)} For any $\gamma \in (1, 2]$ we get 
$$   \lim_{x \rightarrow 0+} x^{-\gamma} \bP (Z_\gamma \leq x )  = \frac{2^{\frac{\gamma}{\gamma -1} } }{ (\gamma -1)^{\frac{\gamma}{\gamma -1} } \Gamma (1+ \gamma) } \;. $$
\end{itemize}
\end{proposition}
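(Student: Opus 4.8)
\textbf{Proof plan for Proposition~\ref{levelestimates}.}
The plan is to read off the tail behaviour from the Laplace transforms computed in Lemmas~\ref{Lstar} and \ref{Lambstar} (with $a=1$, $r=1$, and $r'=1/2$, $r=1$), using Tauberian/Abelian theorems. For part~(ii), recall from Lemma~\ref{Lstar} that
\begin{equation*}
\bE\big[e^{-\lambda Z_\gamma}\big]=\Big(1+\tfrac{\gamma-1}{2}\lambda^{\gamma-1}\Big)^{-\frac{\gamma}{\gamma-1}}\;.
\end{equation*}
As $\lambda\to\infty$ this is asymptotic to $\big(\tfrac{\gamma-1}{2}\big)^{-\frac{\gamma}{\gamma-1}}\lambda^{-\gamma}$, a regularly varying function of $\lambda$ with index $-\gamma<-1$. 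By the Tauberian theorem of Feller/Karamata (\cite{Fe} Chapter~XIII or \cite{BiGoTe} Theorem~1.7.1', exactly as invoked in the proof of Lemma~\ref{taillevdirect}), the behaviour of $\bE[e^{-\lambda Z_\gamma}]$ as $\lambda\to\infty$ is governed by the behaviour of $\bP(Z_\gamma\le x)$ as $x\to 0+$, and one gets $\bP(Z_\gamma\le x)\sim \big(\tfrac{\gamma-1}{2}\big)^{-\frac{\gamma}{\gamma-1}} x^{\gamma}/\Gamma(1+\gamma)$, which is exactly the claimed constant $2^{\frac{\gamma}{\gamma-1}}\big((\gamma-1)^{\frac{\gamma}{\gamma-1}}\Gamma(1+\gamma)\big)^{-1}$. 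This step is routine and works uniformly for $\gamma\in(1,2]$.

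For part~(i), which concerns the right tail and only the strictly stable range $\gamma\in(1,2)$, the idea is to expand the Laplace transforms near $\lambda=0$. From Lemma~\ref{Lstar},
\begin{equation*}
1-\bE\big[e^{-\lambda Z_\gamma}\big]=1-\Big(1+\tfrac{\gamma-1}{2}\lambda^{\gamma-1}\Big)^{-\frac{\gamma}{\gamma-1}}
=\tfrac{\gamma}{2}\,\lambda^{\gamma-1}+o(\lambda^{\gamma-1})\qquad(\lambda\to 0+)\;,
\end{equation*}
so $1-\bE[e^{-\lambda Z_\gamma}]$ is regularly varying at $0$ with index $\gamma-1\in(0,1)$, with leading constant $\gamma/2$. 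A standard Tauberian theorem for the complementary Laplace transform (again \cite{BiGoTe} Theorem~1.7.1' together with the monotone density / de~Haan type complement, or directly \cite{Fe} Chapter~XIII~\S5) then gives $\bP(Z_\gamma>x)\sim c\,x^{-(\gamma-1)}$ as $x\to\infty$ with $c=\frac{\gamma/2}{\Gamma(2-\gamma)}=\frac{\gamma}{2\Gamma(2-\gamma)}$, using $\Gamma(1-(\gamma-1))=\Gamma(2-\gamma)$. For $Z'_\gamma=\Lambda_{1/2,1}(1)$, Lemma~\ref{Lambstar} with $r'=1/2$, $r=1$ gives
\begin{equation*}
1-\bE\big[e^{-\lambda Z'_\gamma}\big]=1-\Big(\frac{\tfrac{\gamma-1}{4}\lambda^{\gamma-1}+1}{\tfrac{\gamma-1}{2}\lambda^{\gamma-1}+1}\Big)^{\frac{\gamma}{\gamma-1}}
=\tfrac{\gamma}{4}\,\lambda^{\gamma-1}+o(\lambda^{\gamma-1})\qquad(\lambda\to0+)\;,
\end{equation*}
which is exactly half the leading coefficient for $Z_\gamma$; hence $\bP(Z'_\gamma>x)\sim \tfrac12 c\,x^{-(\gamma-1)}$, giving the factor $2$ in the statement. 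One should double-check the elementary expansion of the quotient raised to the power $\gamma/(\gamma-1)$: writing the numerator as $1+\tfrac{\gamma-1}{4}\lambda^{\gamma-1}$ and the denominator as $1+\tfrac{\gamma-1}{2}\lambda^{\gamma-1}$, their ratio is $1-\tfrac{\gamma-1}{4}\lambda^{\gamma-1}+o(\lambda^{\gamma-1})$, and raising to $\tfrac{\gamma}{\gamma-1}$ yields $1-\tfrac{\gamma}{4}\lambda^{\gamma-1}+o(\lambda^{\gamma-1})$, confirming the coefficient.

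The only genuinely delicate point is the passage from the small-$\lambda$ expansion of $1-\bE[e^{-\lambda Z}]$ to the right-tail asymptotics of $\bP(Z>x)$: the naive Tauberian theorem gives only $\int_0^x \bP(Z>y)\,dy\sim \cdots$ or a statement about $x\bP(Z>x)$ in Cesàro sense, and to upgrade to the pointwise $\bP(Z>x)\sim c x^{-(\gamma-1)}$ one needs a monotonicity-type input (monotone density theorem). Here this is available because $Z_\gamma$ and $Z'_\gamma$ are genuine nonnegative random variables, so $x\mapsto\bP(Z>x)$ is nonincreasing; combined with regular variation of the integrated tail this yields the pointwise statement. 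Alternatively, and perhaps more cleanly, one can identify the laws directly: the form of the Laplace transform in Lemma~\ref{Lstar} is that of (a power of) a positive stable subordinator evaluated at a fixed time mixed appropriately, and the exact tail constant of a $(\gamma-1)$-stable positive random variable is classical; matching normalisations reproduces $\gamma/(2\Gamma(2-\gamma))$. I would present the Tauberian route as the main argument and flag the monotone-density step explicitly, since that is where care is required. \qed
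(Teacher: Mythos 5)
Your proposal is correct and takes essentially the same route as the paper: expand the Laplace transforms from Lemmas~\ref{Lstar} and~\ref{Lambstar} near $\lambda=\infty$ (for (ii)) and near $\lambda=0$ (for (i)), and apply Tauberian theorems to read off the tail asymptotics, with the same constants throughout. The only small difference is in the citation for (i): the paper invokes the Bingham--Doney Tauberian theorem (\cite{BiGoTe} Theorem~8.1.6), which directly yields the pointwise tail asymptotic $\bP(Z>x)\sim Cx^{-(\gamma-1)}/\Gamma(2-\gamma)$ from $1-\bE[e^{-\lambda Z}]\sim C\lambda^{\gamma-1}$ without any separate monotone-density step, so the delicate upgrade you flag is already packaged in that theorem; your fallback via monotone density is nevertheless a valid alternative justification.
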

\begin{proof}
First assume that $\gamma \in (1, 2)$. When $\lambda$ goes to $0$, we have
$$ \bE \left[ e^{-\lambda Z_\gamma } \right] = 1 - \frac{_\gamma}{^2} \lambda^{\gamma -1} + o(\lambda^{\gamma -1} ) \quad {\rm and} \quad \bE \left[ e^{-\lambda Z^\prime_\gamma } \right] = 1 - \frac{_\gamma}{^4} \lambda^{\gamma -1} + o(\lambda^{\gamma -1} ) \; .$$
A Tauberian theorem due to Bingham and Doney \cite{BiDo} (see also \cite{BiGoTe} Theorem 8.1.6, p. 333 ) implies $(i)$. 
Let us prove $(ii)$. We have $\gamma \in (1, 2]$.  When $\lambda $ goes to $\infty$, we get 
$$ \lim_{\lambda \rightarrow \infty} \lambda^{\gamma} 
\bE \left[ e^{-\lambda Z_\gamma } \right] = \frac{2^{\frac{\gamma}{\gamma -1} } }{ (\gamma -1)^{\frac{\gamma}{\gamma -1} }  } \; .$$ 
Then, $(ii)$ is a consequence of a Tauberian theorem due to Feller (\cite{Fe} Chapter XIII, § 5 ; see also  \cite{BiGoTe} Theorem 1.7.1', p. 38). \qed
\end{proof}
Recall the definition of $M^*_r (a)$ from (\ref{Mstardef}).  For any $0 \leq r^\prime \leq r \leq a $, we set 
\begin{equation}
\label{massshelldef}
 Q_{r^\prime , r} (a)=\sum_{j \in \cI^*} \un_{[a-r, a-r^\prime \, )} (r^*_j) \int_0^{\zeta^*_j}  \un_{\{ H^{*j}_s \leq r-a+ r^*_j \}} \; .
 \end{equation} 
Arguing as in Lemma \ref{couronne}, we prove the following independence property. 
\begin{lemma}
\label{shellific} Let $(r_n , n \geq 0)$ be a sequence such that $0 < r_{n+1} \leq r_n \leq a$ and $\lim_n r_n = 0$. Then, the random variables $(Q_{r_{n+1}, r_n } (a), n \geq 0)$ are independent. 
\end{lemma}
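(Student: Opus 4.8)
The plan is to mimic the proof of Lemma \ref{couronne}. The key structural fact is that, conditionally given the subordinator $U$, the point measure $\cN^* = \sum_{j\in\cI^*}\delta_{(r^*_j,H^{*j})}$ is a Poisson point process on $[0,\infty)\times\bbD([0,\infty),\bR)$ with intensity $dU_r\otimes N_\gamma(dH)$. First I would observe that, by \eqref{massshelldef}, the random variable $Q_{r_{n+1},r_n}(a)$ is a measurable functional of the restriction of $\cN^*$ to the strip $[a-r_n,\,a-r_{n+1})\times\bbD([0,\infty),\bR)$: indeed the indicator $\un_{[a-r_n,\,a-r_{n+1})}(r^*_j)$ selects only atoms whose first coordinate lies in that interval, and the summand $\int_0^{\zeta^*_j}\un_{\{H^{*j}_s\le r_n-a+r^*_j\}}\,ds$ is a deterministic function of $(r^*_j,H^{*j})$ for such an atom. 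Since the sequence $(r_n)$ is strictly decreasing with $\lim_n r_n=0$, the strips $[a-r_{n+1},\,a-r_n)$, $n\ge 0$, are pairwise disjoint subintervals of $[a-r_0,\,a)$.

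Next I would use the complete independence property of Poisson point processes: conditionally on $U$, the restrictions of $\cN^*$ to disjoint Borel sets of $[0,\infty)\times\bbD([0,\infty),\bR)$ are independent. Applying this to the disjoint strips above shows that $(Q_{r_{n+1},r_n}(a),\,n\ge 0)$ are independent conditionally given $U$. Moreover, the conditional law of $Q_{r_{n+1},r_n}(a)$ given $U$ depends only on the increments of $U$ on the interval $[a-r_n,\,a-r_{n+1}]$ — this is because the intensity measure restricted to the relevant strip is $\un_{[a-r_n,a-r_{n+1})}(t)\,dU_t\otimes N_\gamma(dH)$, which is determined by $dU_t$ on that $t$-interval. (A closed-form confirmation, parallel to Lemma \ref{Lambstar}, is $\bE[\exp(-\lambda Q_{r_{n+1},r_n}(a))\,|\,U]=\exp(-\int_{[a-r_n,a-r_{n+1})}dU_t\,u(r_n-a+t,\lambda)\cdot(\text{appropriate argument}))$, but this explicit computation is not needed for the independence claim.)

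Finally I would remove the conditioning on $U$. Since $U$ is a subordinator, it has independent increments; the intervals $[a-r_n,\,a-r_{n+1}]$, $n\ge 0$, are essentially disjoint (they overlap only at endpoints, which carry no mass for the Lebesgue-type part and at most countably many atoms, irrelevant for the law), hence the increments of $U$ over these intervals are independent. Combining (a) conditional independence of the $Q_{r_{n+1},r_n}(a)$ given $U$ with (b) each depending on $U$ only through its increments over disjoint intervals and (c) independence of those increments, a routine Fubini/tower-property argument gives that $(Q_{r_{n+1},r_n}(a),\,n\ge 0)$ are (unconditionally) independent: for bounded measurable $f_0,\dots,f_k$,
\[
\bE\Big[\prod_{n=0}^k f_n\big(Q_{r_{n+1},r_n}(a)\big)\Big]
=\bE\Big[\prod_{n=0}^k \bE\big[f_n(Q_{r_{n+1},r_n}(a))\,\big|\,U\big]\Big]
=\prod_{n=0}^k \bE\big[f_n(Q_{r_{n+1},r_n}(a))\big],
\]
where the last equality uses that the $n$-th conditional expectation is a function of the increments of $U$ on $[a-r_n,a-r_{n+1}]$ only, and these are independent. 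The main (and only mild) obstacle is bookkeeping: making sure the strips are genuinely disjoint and that the functional $Q_{r_{n+1},r_n}(a)$ really is measurable with respect to $\cN^*$ restricted to its strip together with the corresponding piece of $U$ — but this is immediate from \eqref{massshelldef}, exactly as in Lemma \ref{couronne}. \qed
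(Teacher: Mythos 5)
Your proof is correct and takes essentially the same approach as the paper, which simply disposes of this lemma with ``Arguing as in Lemma \ref{couronne}, we prove the following independence property.'' Your spelled-out argument---conditional independence of the restrictions of $\cN^*$ to disjoint strips given $U$, combined with independence of increments of the subordinator $U$---is precisely the argument of Lemma \ref{couronne} transposed to $Q_{r',r}(a)$ (your parenthetical Laplace transform should involve $\kappa_{r_n-a+t}(\lambda,0)$ rather than $u$, as in Lemma \ref{MQstar}, but you correctly note that computation is not needed).
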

\begin{remark}
\label{massutile} Note that the increments of $r \in [0, a] \mapsto M^*_r (a)$ are not independent. However, for any $0 \leq r^\prime \leq r \leq a $, we have 
$$ M_r^* (a) -M^*_{r^\prime} (a)= Q_{r^\prime , r} (a)+ \sum_{^{j \in \cI^*}} \un_{[ a-r^\prime \, , \, a ]} (r^*_j) \int_0^{\zeta^*_j}  \un_{\{  r^\prime-a+ r^*_j < H^{*j}_s \leq r-a+ r^*_j \}} \; , $$
which first implies that $ M_r^* (a)  \geq Q_{r^\prime , r} (a)$. Moreover, we easily see that $Q_{r^\prime , r} (a)$ is independent of $M^*_{r^\prime} (a)$.  \cq 
\end{remark}
Recall the definition of $\kappa_a (\lambda, \mu)$ and recall it satisfies (\ref{equakappa}). 
\begin{lemma}
\label{MQstar} For any $a \in (0, \infty)$ and for any $r \in [0, a]$, we have 
\begin{equation}
\label{QLapcour}
\bE \big[ \exp (-\lambda  Q_{r^\prime , r} (a) ) \big]=  1 - \frac{\kappa_{r-r^\prime} (\lambda, 0)^\gamma}{\lambda} \; .
\end{equation}
Then, 
$$ (r-r^\prime)^{-\frac{\gamma}{\gamma-1}}   Q_{r^\prime , r} (a)  \overset{{\rm (law)}}{=}   
M^*_1 (1)   \; .$$ 
\end{lemma}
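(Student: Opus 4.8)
The plan is to follow verbatim the scheme of the proof of Lemma~\ref{Lambstar}, the only new ingredient being that the function $\kappa_b(\lambda,0)$ is handled through the differential equation~(\ref{equakappa}) instead of the explicit formula~(\ref{explisolbranch}) available for $u$. First I would condition on the subordinator $U$. Since, given $U$, the point measure $\cN^*$ is Poisson on $[0,\infty)\times\bbD([0,\infty),\bR)$ with intensity $dU_t\otimes N_\gamma(dH)$, the exponential formula for Poisson point measures applied to the definition~(\ref{massshelldef}) of $Q_{r^\prime,r}(a)$ gives
\[
\bE\big[\exp(-\lambda Q_{r^\prime,r}(a))\,\big|\,U\big]=\exp\Big(-\int_{[a-r,\,a-r^\prime)}dU_t\;N_\gamma\big(1-e^{-\lambda\int_0^\zeta\un_{\{H_s\le r-a+t\}}\,ds}\big)\Big),
\]
and by~(\ref{kappaexc}) with $\mu=0$ the inner $N_\gamma$-integral equals $\kappa_{r-a+t}(\lambda,0)$. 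Taking the expectation over $U$ and using the exponential (Campbell) formula for the subordinator $U$, whose Laplace exponent is $\theta\mapsto\gamma\theta^{\gamma-1}$, followed by the change of variable $b=r-a+t$, I get
\[
\bE\big[\exp(-\lambda Q_{r^\prime,r}(a))\big]=\exp\Big(-\gamma\int_{a-r}^{a-r^\prime}\kappa_{r-a+t}(\lambda,0)^{\gamma-1}\,dt\Big)=\exp\Big(-\gamma\int_0^{r-r^\prime}\kappa_b(\lambda,0)^{\gamma-1}\,db\Big).
\]
Up to this point the argument is exactly that of Lemma~\ref{Lambstar}.

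The main step is then to identify $\exp(-\gamma\int_0^s\kappa_b(\lambda,0)^{\gamma-1}\,db)$ with $1-\kappa_s(\lambda,0)^\gamma/\lambda$ for $s=r-r^\prime$. Write $k(b)=\kappa_b(\lambda,0)$; by~(\ref{equakappa}) one has $k^\prime(b)=\lambda-k(b)^\gamma$, and $k(0)=0$ because $\int_0^\zeta\un_{\{H_s\le 0\}}\,ds=0$ $N_\gamma$-a.e. (so that~(\ref{kappaexc}) at level $0$ with $\mu=0$ gives $\kappa_0(\lambda,0)=0$, consistent with the empty sum $Q_{r^\prime,r^\prime}(a)=0$). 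Setting $\Phi(s)=1-k(s)^\gamma/\lambda$, I would simply differentiate: $\Phi(0)=1$ and
\[
\Phi^\prime(s)=-\frac{\gamma}{\lambda}\,k(s)^{\gamma-1}k^\prime(s)=-\frac{\gamma}{\lambda}\,k(s)^{\gamma-1}\big(\lambda-k(s)^\gamma\big)=-\gamma\,k(s)^{\gamma-1}\,\Phi(s),
\]
so by uniqueness for this linear ODE, $\Phi(s)=\exp(-\gamma\int_0^s k(b)^{\gamma-1}\,db)$. (One checks along the way that $0\le k(s)<\lambda^{1/\gamma}$ for all $s\ge0$, so $\Phi$ stays in $(0,1]$.) This yields the first assertion, $\bE[\exp(-\lambda Q_{r^\prime,r}(a))]=1-\kappa_{r-r^\prime}(\lambda,0)^\gamma/\lambda$, which in particular depends on $(r,r^\prime,a)$ only through $r-r^\prime$.

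For the scaling statement I would first observe that $M^*_1(1)=Q_{0,1}(1)$ almost surely — the two sums differ only through points with $r^*_j=1$, and the Stieltjes measure $dU$ a.s.\ has no atom at the fixed level $1$ — hence $\bE[\exp(-\lambda M^*_1(1))]=1-\kappa_1(\lambda,0)^\gamma/\lambda$. Then, replacing $\lambda$ by $\lambda(r-r^\prime)^{-\gamma/(\gamma-1)}$ in the formula just obtained and using the scaling identity $\kappa_s(\lambda s^{-\gamma/(\gamma-1)},0)=s^{-1/(\gamma-1)}\kappa_1(\lambda,0)$ — which follows from~(\ref{equakappa}), since $b\mapsto s^{-1/(\gamma-1)}\kappa_{b/s}(\lambda,0)$ solves the $\kappa$-equation with first argument $\lambda s^{-\gamma/(\gamma-1)}$ and initial value $0$, or alternatively from the scaling property~(\ref{Hexcscaling}) of $H$ under $N_\gamma$ together with the self-similarity of the stable subordinator $U$ — one finds
\[
\bE\big[\exp(-\lambda\,(r-r^\prime)^{-\frac{\gamma}{\gamma-1}}Q_{r^\prime,r}(a))\big]=1-\frac{\kappa_1(\lambda,0)^\gamma}{\lambda}=\bE\big[\exp(-\lambda M^*_1(1))\big]
\]
for every $\lambda>0$, whence $(r-r^\prime)^{-\gamma/(\gamma-1)}Q_{r^\prime,r}(a)\overset{{\rm (law)}}{=}M^*_1(1)$.

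I do not expect a real obstacle here; the computation is of the same type as Lemma~\ref{Lambstar}. The only points requiring care are bookkeeping: checking that the functional appearing after conditioning on $U$ is exactly $\int_0^\zeta\un_{\{H_s\le b\}}\,ds$ so that~(\ref{kappaexc}) applies with $\mu=0$, and pinning down the initial value $\kappa_0(\lambda,0)=0$ — this is precisely what makes $\Phi(0)=1$ agree with $Q_{r^\prime,r^\prime}(a)=0$ and hence what lets the ODE argument close the computation in place of an explicit antiderivative.
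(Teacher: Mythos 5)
Your proof is correct and follows essentially the same route as the paper: conditioning on $U$, Campbell's formula for the Poisson intensity and for the subordinator, then the change of variable reducing the exponent to $\gamma\int_0^{r-r'}\kappa_b(\lambda,0)^{\gamma-1}\,db$; the only presentational difference is that you establish the identity $\exp\bigl(-\gamma\int_0^s\kappa_b(\lambda,0)^{\gamma-1}\,db\bigr)=1-\kappa_s(\lambda,0)^\gamma/\lambda$ by an ODE-uniqueness argument, whereas the paper performs the direct substitution $y=\kappa_s(\lambda,0)$ and integrates $\gamma y^{\gamma-1}/(\lambda-y^\gamma)$ — two ways of doing the same computation. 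You were also right to insist that $\kappa_0(\lambda,0)=0$: the initial condition displayed in~(\ref{equakappa}) contains a typo (it should read $\kappa_0(\lambda,\mu)=\mu$, not $\lambda$, as one sees from~(\ref{kappadef}) with $a=0$), and your remark pinning this down, together with the observation that $M^*_1(1)=Q_{0,1}(1)$ a.s.\ because $dU$ has no atom at a fixed level, are both sound.
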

\begin{proof}
Recall that conditionally given $U$, $\cN^*$ is distributed as a Poisson point process with intensity $dU_t \otimes N_\gamma$. Thus,
$$ \bE \left[ \exp \big(- \lambda Q_{r^\prime , r} (a)  \big) \, | \, U \, \right]= \exp \Big(- \int_{[a-r, a-r^\prime\, )} \!\!\!\!\!\!\! \!\!\!\!\!\!\!  dU_t \,  \kappa_{r-a+t} (\lambda , 0) \Big) . $$ 
Since $U$ is a subordinator with Laplace exponent $\lambda \mapsto \gamma \lambda^{\gamma -1}$, we get 
$$ \bE \big[ e^{- \! \lambda  Q_{r^\prime , r} (a) } \big]= \exp \Big(\! \!-\! \gamma \!\!  
\int_{a-r}^{ a-r^\prime \, } \!\!\! \!\!\!\!\!\!\!  \kappa_{r-a+t}( \lambda, 0)^{\gamma -1} \!dt   \Big)=\exp \Big(\!\!-\! \gamma  \!\! 
\int_{0}^{ r-r^\prime \, }  \!\!\!  \!\!\!\!\!\!\!  \kappa_{s}( \lambda, 0)^{\gamma -1}ds   \Big)  . $$ 
Set $y= \kappa_s (\lambda, 0)$. Then, (\ref{equakappa}) entails 
$$ \gamma \int_{0}^{ r-r^\prime \, }  \!\!\!  \!\!\!\!\!\!\!  \kappa_{s}( \lambda, 0)^{\gamma -1}ds = \!\!\! \int_{0}^{\kappa_{r-r^\prime } (0, \lambda)} \, \frac{{\gamma\,  y^{\gamma-1} }}{{\lambda -y^\gamma }} \, dy= \log \lambda - \log \big( \lambda -\kappa_{r-r^\prime} (\lambda , 0)^\gamma \, \big)  ,  $$
which entails the first point of the lemma. Now observe that the scaling property (\ref{Hscaling}) combined with (\ref{kappaexc}) entails that for any $a, \lambda \geq 0$, and any $c >0$, one has 
$$  \kappa_{a} (\lambda, 0) = c^{\frac{1}{\gamma -1}} \kappa_{c\, a} (c^{-\frac{\gamma}{\gamma-1}} \lambda , 0 ) \; , $$ 
which easily implies the second point of the lemma. \qed 
\end{proof}
Take $r^\prime= 0$ in the previous lemma to get the following lemma. 
\begin{lemma}
\label{Mstar} For any $a \in (0, \infty)$ and for any $r \in [0, a]$, we have 
$$ \bE \big[ \exp (-\lambda M^*_r (a) ) \big]= 1 - \frac{\kappa_{r} (\lambda, 0)^\gamma}{\lambda} 
\; .$$
Then, $ r^{_{-\gamma/ (\gamma-1)}}M^*_r (a)$ has the same law as $M^*_1 (1) $. 
\end{lemma}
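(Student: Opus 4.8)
The plan is to derive Lemma~\ref{Mstar} as the $r^\prime=0$ specialization of Lemma~\ref{MQstar}. The one point that needs to be checked first is that $M^*_r(a)$ and $Q_{0,r}(a)$ are in fact the same random variable $\bP$-almost surely: comparing the definitions (\ref{Mstardef}) and (\ref{massshelldef}), these two quantities are built from identical summands, but the former ranges over the atoms $j\in\cI^*$ with $r^*_j\in[a-r,a]$ whereas the latter ranges over $r^*_j\in[a-r,a)$, so they can differ only through the contribution of atoms sitting exactly at level $a$.

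So first I would argue that $\bP$-a.s.\ there is no such atom. Conditionally on the subordinator $U$, the point measure $\cN^*$ is Poisson with intensity $dU_t\otimes N_\gamma(dH)$, hence it carries an atom with first coordinate $a$ only when the (random) Stieltjes measure $dU$ charges the singleton $\{a\}$, that is, only when $\Delta U_a>0$. Being a subordinator, $U$ is a L\'evy process and therefore has no fixed time of discontinuity, so $\bP(\Delta U_a>0)=0$ for the prescribed level $a>0$; integrating over $U$ this gives $\bP\big(\exists\, j\in\cI^*:\ r^*_j=a\big)=0$, whence $M^*_r(a)=Q_{0,r}(a)$, $\bP$-a.s.

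It then remains only to set $r^\prime=0$ in Lemma~\ref{MQstar}. The Laplace transform statement there becomes $\bE[\exp(-\lambda M^*_r(a))]=\bE[\exp(-\lambda Q_{0,r}(a))]=1-\kappa_{r}(\lambda,0)^\gamma/\lambda$, using $\kappa_{r-r^\prime}=\kappa_r$ at $r^\prime=0$; and the distributional scaling identity $(r-r^\prime)^{-\gamma/(\gamma-1)}Q_{0,r}(a)\overset{{\rm (law)}}{=}M^*_1(1)$ becomes $r^{-\gamma/(\gamma-1)}M^*_r(a)\overset{{\rm (law)}}{=}M^*_1(1)$, which is the second assertion. (The same remark, applied instead to Lemma~\ref{Lambstar}, is what underlies the earlier Lemma~\ref{Lstar}.)

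I do not expect any genuine obstacle here: the statement is an immediate corollary of Lemma~\ref{MQstar}, and the only substantive observation is the $\bP$-a.s.\ identification $M^*_r(a)=Q_{0,r}(a)$, which amounts merely to the fact that a subordinator a.s.\ does not jump at a fixed time.
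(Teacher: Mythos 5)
Your proposal is correct and follows the same route as the paper: the paper simply says to take $r'=0$ in Lemma~\ref{MQstar}, and you do exactly that. The extra point you raise — that the half-open versus closed indicator at the endpoint $a$ makes $M^*_r(a)$ and $Q_{0,r}(a)$ potentially differ, and that this is harmless because the subordinator $U$ has no fixed jump at $a$ — is a careful bookkeeping step that the paper leaves implicit, and your justification of it is sound.
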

To simplify notation, let us set $Y_\gamma := M^*_{ 1} (1) $. 
\begin{proposition}
\label{massup}  For any $\gamma \in (1, 2)$, we have 
$$ \lim_{x \rightarrow \infty} x^{\gamma -1} \bP (Y_\gamma \geq x ) =  
\frac{1}{\Gamma (2-\gamma) } .$$
\end{proposition}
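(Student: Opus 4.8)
The plan is to follow the pattern of the proof of Proposition~\ref{levelestimates}~$(i)$: first identify the behaviour of the Laplace transform of $Y_\gamma$ near $0$, then conclude by the Tauberian theorem of Bingham and Doney. By Lemma~\ref{Mstar} applied with $a=r=1$, one has $\bE[\exp(-\lambda Y_\gamma)] = 1 - \kappa_1(\lambda,0)^\gamma/\lambda$, so it is enough to show
$$ 1 - \bE\big[\exp(-\lambda Y_\gamma)\big] \; = \; \frac{\kappa_1(\lambda,0)^\gamma}{\lambda} \; \sim_{\lambda \rightarrow 0+} \; \lambda^{\gamma -1} \; , $$
which in turn reduces to the estimate $\kappa_1(\lambda,0) \sim \lambda$ as $\lambda \rightarrow 0+$. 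Unlike the situation for $Z_\gamma$, where the Laplace transform is fully explicit through Lemma~\ref{Lstar}, the only additional ingredient required here is this asymptotics of $\kappa_1(\lambda,0)$ as $\lambda \rightarrow 0+$.

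To obtain it, I would use the differential equation~(\ref{equakappa}): the map $a \mapsto \kappa_a(\lambda,0)$ is nonnegative, vanishes at $a=0$ (this is forced since $M^*_0(1)=0$ by~(\ref{Mstardef}), and it also follows directly from~(\ref{kappaexc})), and satisfies $\partial_a \kappa_a(\lambda,0) = \lambda - \kappa_a(\lambda,0)^\gamma$. Integrating yields at once $\kappa_a(\lambda,0) \leq \lambda a$, and feeding this bound back into the integral form $\kappa_a(\lambda,0) = \lambda a - \int_0^a \kappa_s(\lambda,0)^\gamma\,ds$ gives
$$ \lambda a\Big( 1 - \frac{\lambda^{\gamma -1} a^\gamma}{\gamma +1}\Big) \; \leq \; \kappa_a(\lambda,0) \; \leq \; \lambda a \; , \qquad a, \lambda \geq 0 \; . $$
Taking $a=1$ shows $\kappa_1(\lambda,0) = \lambda\,(1 + O(\lambda^{\gamma-1}))$, whence $\kappa_1(\lambda,0)^\gamma/\lambda = \lambda^{\gamma -1}(1 + O(\lambda^{\gamma-1})) \sim \lambda^{\gamma-1}$ as $\lambda \rightarrow 0+$, since $\gamma - 1 \in (0,1)$. (Alternatively, the scaling identity $\kappa_a(\lambda,0) = c^{1/(\gamma-1)}\kappa_{ca}(c^{-\gamma/(\gamma-1)}\lambda, 0)$ established within the proof of Lemma~\ref{MQstar}, used with $a=1$ and $c = \lambda^{(\gamma-1)/\gamma}$, reduces the claim to the elementary fact that $\kappa_s(1,0) \sim s$ as $s \rightarrow 0+$, which is immediate from~(\ref{equakappa}).)

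Finally, from $1 - \bE[\exp(-\lambda Y_\gamma)] \sim \lambda^{\gamma-1}$ as $\lambda \rightarrow 0+$, the Tauberian theorem of Bingham and Doney~\cite{BiDo} (see also~\cite{BiGoTe}, Theorem~8.1.6, p.~333), applied exactly as for $Z_\gamma$ in Proposition~\ref{levelestimates}~$(i)$ but with the constant $1$ in place of $\gamma/2$, yields $\bP(Y_\gamma \geq x) \sim x^{-(\gamma-1)}/\Gamma(2-\gamma)$, that is $\lim_{x\rightarrow\infty} x^{\gamma-1}\bP(Y_\gamma \geq x) = 1/\Gamma(2-\gamma)$. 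I do not anticipate any genuine obstacle: the only point requiring mild care is the justification of $\kappa_0(\lambda,0)=0$ together with the one-step bootstrap controlling $\kappa_1(\lambda,0)$ near $\lambda=0$ with sufficient precision, while the Tauberian step is verbatim one already used in the paper.
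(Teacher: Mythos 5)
Your proof is correct and reaches the same intermediate estimate $\kappa_1(\lambda,0)\sim\lambda$ as $\lambda\rightarrow 0+$, but by a genuinely different route. The paper obtains $\lim_{\lambda\rightarrow 0}\kappa_a(\lambda,0)/\lambda=a$ probabilistically, from (\ref{kappaexc}) together with the occupation-time identity $\int_0^\zeta\un_{\{H_s\leq a\}}\,ds=\int_0^a L^b_\zeta\,db$ and the first-moment formula (\ref{meanloc}) $N_\gamma(L^b_\zeta)=1$; this is a one-line monotone-convergence/differentiation argument under $N_\gamma$. You instead stay entirely at the analytic level, bootstrapping the integral form $\kappa_a(\lambda,0)=\lambda a-\int_0^a\kappa_s(\lambda,0)^\gamma\,ds$ to get two-sided bounds, which has the small advantage of producing an explicit rate $\kappa_1(\lambda,0)=\lambda(1+O(\lambda^{\gamma-1}))$ without invoking the stochastic representation of $\kappa$ at all. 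Both then conclude with the same Tauberian theorem of Bingham and Doney.

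One remark worth making: you correctly use $\kappa_0(\lambda,0)=0$, whereas the displayed initial condition in (\ref{equakappa}) reads $\kappa_0(\lambda,\mu)=\lambda$. That is in fact a misprint in the paper — the correct condition is $\kappa_0(\lambda,\mu)=\mu$ (consistent with $u(0,\mu)=\mu$ in (\ref{explisolbranch}), with (\ref{kappaexc}) at $a=0$, and with the change of variables in the proof of Lemma~\ref{MQstar}). Your justification via $M^*_0(1)=0$ in (\ref{Mstardef}) and via (\ref{kappaexc}) is sound, so this point does not affect the validity of your argument; it is simply worth noting that the stated ODE initial condition, taken literally, would contradict your use of it.
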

\begin{proof}
Recall (\ref{kappaexc}), recall that $\int_{^0}^{_\zeta} \un_{\{ H_s \leq a \}} ds =
 \int_{^0}^{_a} L^{_b}_{^\zeta} \, db$ and recall that $N(L^{_b}_{^\zeta} )= 1$, for any $b \in (0, \infty)$ (see (\ref{meanloc})). Thus, 
\begin{equation}
\label{intoccbelowa}
 \lim_{\lambda \rightarrow 0} \frac{\kappa_a (\lambda, 0)}{\lambda}=N \Big(  \int_0^\zeta \un_{\{ H_s \leq a \}} ds\Big)=  \int_0^a N \big( L^b_\zeta  \big) \, db = a \; .
\end{equation}
Take $a=1$ in (\ref{intoccbelowa}) and use Lemma \ref{Mstar} to get 
$$ \bE \left[ e^{-\lambda Y_\gamma }\right] = 1 - \lambda^{\gamma -1} + o (\lambda^{\gamma -1} )$$
when $\lambda$ goes to $0$. Since $0 < \gamma- 1< 1$, a 
Tauberian theorem due to Bingham and Doney \cite{BiDo} entails the desired result 
(see also \cite{BiGoTe} Theorem 8.1.6, p. 333). \qed
\end{proof}

\section{Proofs of the main results.}   
\label{Pfsec}   
\subsection{Proof of Theorem \ref{packlevel}.}
\label{proofpaclevel}   
Let us fix $a \in (0, \infty)$ and let $g: (0, 1) \rightarrow (0, \infty)$ be such that $\lim_{0+} r^{_{-1/ (\gamma-1)}} g(r)= 0$. To simplify notation we set $h(r)=  r^{_{-1/ (\gamma-1)}}g(r)$. Lemma \ref{Lstar} and Proposition \ref{levelestimates} $(ii)$ imply 
that for all sufficiently large $n$, 
\begin{equation}
\label{pacballevest}
 \bP ( L^*_{ 2^{-n}}  (a)   \leq g (2^{-n})  )=  \bP( Z_\gamma \leq h(2^{-n})) 
\sim_{n \rightarrow \infty} K_\gamma  h(2^{-n})^{\gamma } \; , 
\end{equation}
where $K_\gamma$ is the limit on the right member of Proposition \ref{levelestimates} $(ii)$. 
We first prove Theorem \ref{packlevel} $(i)$. So we assume 
\begin{equation}
\label{finih}
\sum_{n \geq 1} h (2^{-n})^{\gamma}  < \infty \; .
\end{equation}
Borel-Cantelli and (\ref{pacballevest}) imply $\bP ( \liminf_{n \rightarrow \infty} L^*_{ 2^{-n}}  (a) /g(2^{-n}) \geq  1)= 1$.  This easily entails $\bP ( \liminf_{n \rightarrow \infty} L^*_{ 2^{-n}}  (a) /g(2^{-n}) = \infty)= 1$, since (\ref{finih}) is also satisfied by $K.h$ for arbitrarily large $K$. Then, (\ref{keyloc}) implies 
$$ N_\gamma \left( \int_{\cT} \!\!\! \ell^a(d\sigma) \un_{ \{ \liminf_{n}  \ell^a (B(\sigma, 2^{-n})) / g(2^{-n})   < \infty      \}} \,   \right)=  0 \; , $$
which entails (\ref{lowdensinfty}) in Theorem \ref{packlevel} $(i)$. 
\begin{lemma}
\label{packlevbad} We assume that $g$ is a regular gauge function that satisfies (\ref{finih}) and we set 
$$ E= \big\{  \sigma \in \cT (a) \; : \; \liminf_{r \rightarrow 0}   \ell^a( B(\sigma ,r) ) / g(r) < 1 \big\} \; .$$ 
Then, $N_\gamma$-a.e.$\, \cP_g ( E ) \leq \langle \ell^a  \rangle $. 
\end{lemma}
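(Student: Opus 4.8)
The plan is to cover $E$ by the dyadic family of "$g$-thin" subsets of $\cT(a)$ and to show that the $g$-packing measures of these subsets are summable. Writing $h(r)=r^{-1/(\gamma-1)}g(r)$, set for $m\ge 1$
\[
A_m:=\big\{\,\sigma\in\cT(a):\ \ell^a(\bar B(\sigma,2^{-m}))\le C\,g(2^{-m})\,\big\},
\]
where $C>1$ is the doubling constant of $g$. If $\sigma\in E$ and $r_j\downarrow 0$ satisfies $\ell^a(B(\sigma,r_j))<g(r_j)$, then, choosing $m_j=\min\{m:2^{-m}<r_j\}$ (so that $2^{-m_j}<r_j\le 2^{-m_j+1}$ and $m_j\to\infty$), we get $\ell^a(\bar B(\sigma,2^{-m_j}))\le\ell^a(B(\sigma,r_j))<g(r_j)\le g(2^{-m_j+1})\le C\,g(2^{-m_j})$, so $\sigma\in A_{m_j}$; hence every $\sigma\in E$ lies in $A_m$ for infinitely many $m$, and $E\subset\bigcup_{m\ge N}A_m$ for every $N$. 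Since $\cP_g$ is countably subadditive, it therefore suffices to prove that $\sum_{m\ge 1}\cP_g(A_m)<\infty$, $N_\gamma$-a.e., which will in fact give the stronger conclusion $\cP_g(E)=0$.

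To estimate $\cP_g(A_m)$, write $A_m=(A_m\cap F)\cup(A_m\cap E)$ with $F=\cT(a)\setminus E$. On $F$ one has $\liminf_{r\to0}\ell^a(B(\sigma,r))/g(r)\ge 1$ by the definition of $E$, so Lemma \ref{genedens}(ii), applied with $\mu=\ell^a$, gives $\cP_g(A_m\cap F)\le\ell^a(A_m\cap F)\le\ell^a(A_m)$. The total masses $\ell^a(A_m)$ are controlled by the spinal decomposition: taking in (\ref{keyloc}) the functional $F\big((x_r)_{r\le 2a}\big)=\un_{\{x_{2^{-m}}\le Cg(2^{-m})\}}$ and using Lemma \ref{Lstar} and Proposition \ref{levelestimates}(ii) (recall $h(r)\to0$), one gets, as $m\to\infty$,
\[
N_\gamma\big[\,\ell^a(A_m)\,\big]=\bP\big(L^*_{2^{-m}}(a)\le Cg(2^{-m})\big)=\bP\big(Z_\gamma\le Ch(2^{-m})\big)\;\sim\;K_\gamma\,C^{\gamma}\,h(2^{-m})^{\gamma}.
\]
Since $N_\gamma[\ell^a(A_m)]\le N_\gamma[\langle\ell^a\rangle]=1$ by (\ref{meanloc}), the hypothesis (\ref{finih}) and Tonelli's theorem give $N_\gamma\big[\sum_m\ell^a(A_m)\big]=\sum_m N_\gamma[\ell^a(A_m)]<\infty$, hence $\sum_m\ell^a(A_m)<\infty$ and therefore $\sum_m\cP_g(A_m\cap F)<\infty$, $N_\gamma$-a.e.

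It remains to bound $\sum_m\cP_g(A_m\cap E)$, the contribution of the $\ell^a$-negligible points that are $g$-thin at scale $2^{-m}$ and at the same time lie in the bad set $E$; this is the hard part. Lemma \ref{genedens} is useless here with $\mu=\ell^a$ — its lower-density hypothesis fails precisely on $E$ — and because $\ell^a(E)=0$ the "up to a null set" form of the Vitali covering theorem is vacuous, so one is forced to cover $A_m\cap E$ explicitly and to estimate the $g$-pre-measures of the covering balls. The plan is to select, greedily, a pairwise disjoint family $\bar B(\sigma_{m,k},2^{-m})$ with centres $\sigma_{m,k}\in A_m$ whose doubled balls cover $A_m$, and then to bound the cardinality of this family: a crude count against a $2^{-m}$-net of $\cT(a)$ of cardinality $O(\langle\ell^a\rangle\,2^{m/(\gamma-1)})$ — available from (\ref{geolocapprox}) — only yields $\cP_g^*(A_m)=O(\langle\ell^a\rangle\,h(2^{-m}))$, and $\sum_m h(2^{-m})$ need not converge under (\ref{finih}). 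The decisive and delicate step — which I expect to be the main obstacle — is to sharpen this count using that every ball of the disjoint family carries $\ell^a$-mass $\le Cg(2^{-m})\ll 2^{-m/(\gamma-1)}$, i.e. that at scale $2^{-m}$ the level set can accommodate only an $h(2^{-m})^{\gamma-1}$-proportion of such anomalously thin balls (a Borel–Cantelli-type statement, proved via the scaling of $\cT(a)$ and the estimate $\bP(L^*_{2^{-m}}(a)\le Cg(2^{-m}))\sim K_\gamma C^\gamma h(2^{-m})^{\gamma}$ already used above), so that $\sum_m h(2^{-m})$ is replaced by the convergent $\sum_m h(2^{-m})^{\gamma}$. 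Granting this, $\sum_m\cP_g(A_m)<\infty$ and hence $\cP_g(E)=0\le\langle\ell^a\rangle$; combined with the complementary estimate $\cP_g(F)\le\ell^a(F)\le\langle\ell^a\rangle$ (Lemma \ref{genedens}(ii) again) this makes $\cP_g(\cT(a))$ finite, from which $\cP_g(\cT(a))=0$ follows upon replacing $g$ by $Kg$ and letting $K\to\infty$, which is the use to which the lemma is ultimately put.
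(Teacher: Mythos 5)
Your proof has a genuine gap that you flag yourself: the bound on $\sum_m \cP_g(A_m \cap E)$ is never established, and the sketch that follows --- a greedy cover of $A_m$, then a heuristic replacing $\sum_m h(2^{-m})$ by $\sum_m h(2^{-m})^\gamma$ via an unproved ``Borel--Cantelli-type statement'' --- is not an argument. Proving an almost-sure upper bound on the number of anomalously thin balls in an \emph{arbitrary} disjoint family at scale $2^{-m}$ is precisely the obstacle, and a per-scale first-moment estimate does not by itself give such a bound. Note also that since $E \cap F = \emptyset$, the sets $A_m \cap F$ do not intersect $E$ at all; the only estimate you carry to completion, $\sum_m \cP_g(A_m \cap F) < \infty$, therefore contributes nothing to a bound on $\cP_g(E)$. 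Finally, you are aiming for the strictly stronger conclusion $\cP_g(E) = 0$ when the lemma asks only for $\cP_g(E) \le \langle\ell^a\rangle$; that extra strength is exactly what makes the missing step hard.

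The paper avoids the cardinality problem by bounding the pre-measure $\cP^*_g(E)$ directly rather than passing through $\cP_g(A_m)$. Take any $\varepsilon$-packing $(\bar B(\sigma_m, r_m))_{m \ge 1}$ of $E$ and split $\sum_m g(r_m)$ into the balls with $\ell^a(B(\sigma_m, r_m)) \ge g(r_m)$, whose total is $\le \langle\ell^a\rangle$ by disjointness, and the remaining thin balls. For a thin ball with $2^{-n} < r_m \le 2\cdot 2^{-n}$, the centre $\sigma_m$ lies in a unique subtree $T^*$ of the decomposition of $\cT$ above level $a - 2^{-n-1}$; every point of $T^*\cap \cT(a)$ is within distance $2\cdot 2^{-n-1}=2^{-n} < r_m$ of $\sigma_m$, so $T^*\cap\cT(a)\subset B(\sigma_m,r_m)$ and hence $\ell^a(T^*) < g(r_m)$; disjointness of the packing forces $m \mapsto T^*$ to be injective at each scale. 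The thin contribution is thus dominated, \emph{deterministically} given the tree, by the fixed random variable $\sum_{n : 2^{-n} < \varepsilon} V_{n+1}$, where $V_n = \sum_T g(2\cdot 2^{-n}) \un_{\{\ell^a(T)\le g(2\cdot 2^{-n})\}}$ with $T$ ranging over the subtrees above level $a-2^{-n}$ of height $> 2^{-n}$. The branching property and Lemma \ref{taillevdirect} give $N_\gamma(V_n) \lesssim h(2\cdot 2^{-n})^\gamma$, so $\sum_n V_n < \infty$ $N_\gamma$-a.e.\ under (\ref{finih}) and the tail vanishes as $\varepsilon \downarrow 0$, yielding $\cP^*_g(E) \le \langle\ell^a\rangle$. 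It is the passage through subtrees of the branching decomposition --- rather than a covering count on $\cT(a)$ --- that makes the estimate close, and that is exactly what your cover-and-count plan is missing.
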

\begin{proof} Let us fix $b\in (0, a)$. Recall that $(g^{_b}_{^j}, d^{_b}_{^j})$, $j\in \cI_b$
stand for the connected components of the open set $\{t \geq 0:H_t>b\}$ and recall that 
$H^{_{b, j}}$ is the corresponding excursion of $H$ above $b$ corresponding to $(g^{_b}_{^j}, d^{_b}_{^j})$. We set $\cT^{_{b}}_{^j}= p([g^{_b}_{^j}, d^{_b}_{^j}])$ and $\sigma^{_b}_{^j} = p(g^{_b}_{^j})= p(d^{_b}_{^j})$. As already mentioned $(\cT^{_b}_{^j}, d, \sigma^{_b}_{^j})$ is isometric to the tree coded by $H^{_{^{b, j}}}_{ }$. The total height of $\cT^{_b}_{^j}$ is then $\Gamma (\cT^{_b}_{^j})= \sup_{^{s \geq 0}}H^{_{b, j}}_s$. For any $\eta >0$, we set 
$$ \cD_{b, \eta}= \{ \cT^b_j \; ; \; i \in \cI_b \, : \; \Gamma (\cT^{_b}_{^j}) >\eta   \} \; .$$
Note that $\cD_{b, \eta}$ is a finite set. Observe that $\ell^a (\cT^{_b}_{^j} )= L^{_a}_{^{d^b_j}}-L^{_a}_{^{g^b_j}}$ is the local time at level $a-b$ of $H^{_{^{b, j}}}_{ }$, or equivalently the total mass of the local time measure at level $a-b$ of $\cT^{_b}_{^j}$. Then, the branching property entails for any $x>0$, 
$$ N_\gamma^{_{(b)}} \Big( \sum_{^{T \in  \cD_{b, a-b}}} \!\!\!\!\! \un_{ \{ \ell^a (T ) \leq x \}}
\; \Big| \,  \cG_b  \Big)  = L^b_\zeta   \; N_\gamma  \big(  \, L^{a-b}_\zeta  \, \leq x \, ; \, \sup H > a-b   \big) \; .$$
Recall that $L^{_{a-b}}_{^\zeta} = \langle \ell^{a-b} \rangle$. Then, (\ref{vvvequa}) and the scaling property (\ref{scalingZolo}) imply 
$$ N_\gamma  \big(  \, L^{a-b}_\zeta  \, \leq x \, ; \, \sup H > a-b   \big)= \big( (\gamma\!-\!1) (a\!-\!b) \big)^{-\frac{1}{\gamma-1}}  N_\gamma^{_{(1)}} \!\!  \big(  \; \langle \ell^{1} \rangle  \, \leq (a\!-\!b)^{-\frac{1}{\gamma-1}}x  \,   \big) .$$ 
Recall that $ N_\gamma$-a.e.$\, L^b_\zeta=\langle \ell_b \rangle =0$, on $\{ \sup H \leq b \}$. Thus, (\ref{meanloc}) and (\ref{vvvequa}) entail
\begin{equation}
\label{tricomput}
N_\gamma \Big( \sum_{^{T \in  \cD_{b, a-b}}} \!\!\!\!\! \un_{ \{ \ell^a (T ) \leq x \}}
 \Big) = \big( (\gamma\!-\!1) (a\!-\!b) \big)^{-\frac{1}{\gamma-1}}  N_\gamma^{_{(1)}}   \big( \;  \langle \ell^{1} \rangle  \, \leq (a\!-\!b)^{-\frac{1}{\gamma-1}}x  \,   \big). 
\end{equation}
For any $n \in \bN$ such that $2^{-n} <a$, we next set 
$$ V_n =  \sum_{ T \in \cD_{a-2^{-n} ,2^{-n} } }  g( 2. 2^{-n}) \un_{ \{ \ell^a (T ) \leq   g(2.2^{-n} )   \}}  \; . $$
We apply (\ref{tricomput}) with $b= a-2^{-n}$ and $\eta = 2^{-n}$, and we use Lemma \ref{taillevdirect} to get 
$$ N( V_n) =(\gamma-1)^{-\frac{1}{\gamma -1}} 2^{\frac{n}{\gamma -1}} g( 2. 2^{-n})  N^{_{_{(1)}}}_\gamma \!\! \big(\langle \ell^{1} \rangle \leq 2^{-\frac{1}{\gamma-1}} h(2.2^{-n} )  \big) \leq K^\prime_\gamma \, h(2.2^{-n} )^\gamma, $$
where $K^\prime_\gamma $ is a positive constant that only depends on $\gamma$. Therefore, (\ref{finih}) entails
\begin{equation}
\label{limidirect}
N_\gamma \;^{_{_-}}{\rm a.e.} \qquad \lim_{n \rightarrow \infty} \;  \sum_{p \geq n} V_p = 0 \; .
\end{equation}

 Let $\varepsilon \in (0, a/2)$. We assume that $\cT (a)  \neq \emptyset$. Let $(\bar{B}(\sigma_m, r_m)\, ; \, m \geq 1)$ be any $\varepsilon$-closed packing of $E$. Namely, the closed balls $\bar{B}(\sigma_m, r_m)$ are pairwise disjoints, $\sigma_m \in E \subset  \cT(a)$ and $r_m \leq \varepsilon$, for any $m \geq 1$. Let us fix $m \geq 1$. There exists $n$ (that depends on $m$) such that $2^{-n}< r_m \leq 2. 2^{-n}$. Now observe that $ \cT(a)$ is the union of the sets $T \cap \cT(a)$ where $T$ ranges in $\cD_{a-2^{-n-1} ,2^{-n-1} }$. Consequently, there exists $T^*\in \cD_{a-2^{-n-1} ,2^{-n-1} }$ such that $\sigma_m \in T^* \cap \cT(a)$. Denote by $\sigma^*$, the lowest point in $T^*$. Namely $\sigma^*$ is the point of $T^*$ that is the closest to root and $\sigma^* \in \cT(a-2^{-n-1})$. 
It is easy to prove that for any $\sigma \in T^*\cap \cT(a)$, we have 
$$d(\sigma, \sigma_m)\leq d(\sigma, \sigma^*) + d(\sigma, \sigma^*)= 2 . 2^{-n-1}= 2^{-n} < r_m \; .$$
 Thus $T^*\cap \cT(a) \subset \cT(a) \cap \bar{B}(\sigma_m, r_m)$. Thus, $\ell^a(T^*) \leq \ell^a(\bar{B}(\sigma_m, r_m))$. Since this holds true for any $m \geq 1$, we get  
\begin{equation}
\label{discreti}
\sum_{^{m \geq 1}} g(r_m)\un_{ \{  \ell^a(B(\sigma_m , r_m) ) < g(r_m) \}} \leq \sum_{n: 2^{-n}< \varepsilon} V_{n+1} .
\end{equation}
Now observe that 
$$ \sum_{^{m \geq 1}} g(r_m)\un_{ \{  \ell^a(B(\sigma_m , r_m) ) \geq g(r_m) \}} \leq  \sum_{^{m \geq 1}} \ell^a(B(\sigma_m , r_m) ) \leq \langle \ell^a \rangle .$$
This inequality combined with (\ref{discreti}) implies 
$$\sum_{m \geq 1} g(r_m) \leq \langle \ell^a \rangle + \sum_{ n: 2^{-n} <\varepsilon } V_{n+1} \; .$$
Since, this holds true for any $\varepsilon$-closed packing $(\bar{B}(\sigma_m, r_m)\, ; \, m \geq 1)$ of $E$, (\ref{limidirect}) entails $\cP^*_g (E) \leq \langle \ell^a \rangle $, $N_\gamma$-a.e.$\;$where $\cP^*_g$ stands for the $g$-packing pre-measure, which completes the proof of the lemma since $\cP_g (E) \leq \cP^*_g(E)$, by definition of $\cP_g$. \qed 
\end{proof}
Lemma \ref{genedens} $(ii)$ implies that $\cP_g (\cT (a) \backslash E) \leq  \langle \ell^a \rangle$. This inequality combined with Lemma \ref{packlevbad} entails $N_\gamma$-a.e.$\, \cP_g ( \cT (a) ) = \cP_g (E) + \cP_g (  \cT (a ) \backslash E ) \leq  2 \langle \ell^a \rangle $. This proves that for any regular gauge function $g$ that satisfies (\ref{finih}), we $N_\gamma$-a.e.$\;$have $\cP_g (\cT (a)) \leq  2 \langle \ell^a \rangle$. Thus, for any constant $K >0$, we have $\cP_{Kg}  (\cT (a)) \leq  2 \langle \ell^a \rangle$. Now observe that $\cP_{Kg}  (\cT (a))= K \cP_{g}  (\cT (a))$, which easily implies $\cP_g (\cT (a))= 0$, $N_\gamma$-a.e. This completes the proof of Theorem \ref{packlevel} $(i)$.

\medskip

Let us prove Theorem \ref{packlevel} $(ii)$. We now assume 
\begin{equation}
\label{infinih}
\sum_{n \geq 1} h (2^{-n})^{\gamma}  = \infty \; .
\end{equation}
For any $n \geq 1$, set $S_n = \varepsilon_1 + \ldots + \varepsilon_n$, where $ \varepsilon_n = \un_{\{  L^*_{ 2^{-n} }   (a) \;  \leq \; g (2^{-n})   \}} $. Estimates (\ref{pacballevest}) implies that 
\begin{equation}
\label{meansn}
\bE \left[  S_n \right] \sim_{n \rightarrow \infty} K_\gamma \sum_{l=1}^n h \left( 2^{-l} \right)^{\gamma}  \; .
\end{equation}
 Next observe that 
$$ \bE \left[ S_n^2 \right] = \bE[ S_n] + 2 \!\!\! \!\!\sum_{1 \leq k < l \leq n} \bE \left[ \varepsilon_k 
\varepsilon_l\right] \; .$$
Fix $1 \leq k < l \leq n$. As noted in Remark \ref{utile}, $\Lambda_{2^{-l} , 2^{-k} }   (a) \leq L^*_{ 2^{-k} }   (a) $. Thus, 
\begin{eqnarray*}
\{   L^*_{ 2^{-k} }   (a)   \leq  g (2^{-k})  \} \cap  \{  L^*_{ 2^{-l} }   (a) \;  \leq \; g (2^{-l})   \}   \hspace{30mm} \\ 
\hspace{20mm} \subset  \{   
\Lambda_{2^{-l} , 2^{-k} }   (a)   \leq  g (2^{-k})  \} \cap  \{  L^*_{ 2^{-l} }   (a) \;  \leq \; g (2^{-l})   \} .
\end{eqnarray*}
Remark \ref{utile} also asserts that  $L^*_{ 2^{-l} }   (a)$ is independent of $\Lambda_{2^{-l} , 2^{-k} }   (a)$. Thus, 
\begin{eqnarray}
 \label{borneun}
\bE \left[ \varepsilon_k 
\varepsilon_l \right] & \leq & \bP \left( \;  \Lambda_{2^{-l} , 2^{-k} }   (a)  \leq \; g (2^{-k}) \; \right)  \bE \left[ \varepsilon_l \right]  \nonumber \\
& \leq & \bP \left( \; 2^{\frac{k}{\gamma-1}} \Lambda_{2^{-l} , 2^{-k} }   (a)  \leq  h (2^{-k}) \; \right)  \bE \left[ \varepsilon_l \right] \; .
\end{eqnarray}
We give an upper bound of the last probability thanks to the Laplace transform of $2^{\frac{k}{\gamma-1}}\Lambda_{2^{-l} , 2^{-k} }   (a) $ that is explicitly given in (\ref{Lapcour}) in Lemma \ref{Lambstar}:
\begin{eqnarray*}
\bE \big[  \exp (-\lambda 2^{\frac{k}{\gamma-1}}\Lambda_{2^{-l} , 2^{-k} }   (a) ) \big]  &=& \left(  \frac{\frac{(\gamma \!-\! 1)}{2} 2^{-(l-k)} \lambda^{\gamma -1}+ 1}{ \frac{(\gamma \!-\! 1)}{2} \lambda^{\gamma -1}+ 1}  \right)^{\frac{\gamma }{\gamma -1}} \\
 &= & \Big( 2^{-(l-k)} +  \frac{ 1- 2^{-(l-k)} }{\frac{\gamma-1}{2}  \lambda^{\gamma -1}+ 1} 
 \Big)^{\frac{\gamma }{\gamma -1}} \\
& \leq & 2^{\frac{1}{\gamma-1}} \Big( 2^{-\frac{\gamma }{\gamma -1}(l-k)} + \Big(\frac{ 1- 2^{-(l-k)} }{\frac{\gamma-1}{2}  \lambda^{\gamma -1}+ 1}  \Big)^{\frac{\gamma }{\gamma -1}}    \Big), 
\end{eqnarray*}
by an elementary convex inequality. Set $C_1= 2^{_{1/ (\gamma-1)}} \left(2/(\gamma -1)\right)^{_{\gamma /(\gamma -1)}}$. The previous inequality easily entails the following 
$$ \bE \big[  \exp (-\lambda 2^{\frac{k}{\gamma-1}} \Lambda_{2^{-l} , 2^{-k} }   (a) ) \big]   \leq   C_1 \left( 2^{-\frac{\gamma }{\gamma -1}(l-k)} + \lambda^{-\gamma} \right)  \; .$$
We now use Markov inequality to get 
\begin{eqnarray}
\label{bornedeuz}
\bP \left( \;  2^{\frac{k}{\gamma-1}}\Lambda_{2^{-l} , 2^{-k} }   (a)  \leq \; h (2^{-k}) \; \right)  & \leq & e
\bE \Big[ \exp \big( -2^{\frac{k}{\gamma-1}} \Lambda_{2^{-l} , 2^{-k} }   (a) / h(2^{-k}) \big)  \Big] \nonumber \\
& \leq & e \, C_1 \big( 2^{-\frac{\gamma }{\gamma -1}(l-k)}  + h (2^{-k} )^\gamma  \big) \; . 
\end{eqnarray}
Now, (\ref{pacballevest}) implies that there exists $C_2\in (0, \infty)$ that only depends on $\gamma$ and $h$ such that $ h( 2^{-k} )^{_\gamma} \leq C_2 \bE [ \varepsilon_k ]  $, for any $k \geq 1$. Thus, (\ref{borneun}) and (\ref{bornedeuz}) imply there exists $C_3 \in (0, \infty)$ (that only depends on $h$ and $\gamma$) such that 
$$ \bE \left[ \varepsilon_k 
\varepsilon_l \right]  \leq C_3  \left( 2^{-\frac{\gamma }{\gamma -1}(l-k)}  \bE [ \varepsilon_l ]   + \bE [ \varepsilon_k ] \bE [ \varepsilon_l ] \right) \; , $$ 
which easily implies 
$$ \bE \left[ S_n^2 \right] \leq   \left(1 + \frac{C_3}{1-2^{-\frac{\gamma }{\gamma -1}}} \right)
 \bE \left[ S_n \right] + C_3 . \left(  \bE \left[ S_n\right] \right)^2 \; .$$
By (\ref{meansn}) and (\ref{infinih}),we get 
$$ \limsup_{n \rightarrow \infty} \frac{\bE \left[ S_n^2 \right]}{\left(  \bE \left[ S_n\right] \right)^2} \leq C_3 $$
and Kochen-Stone's Lemma implies $ \bP ( \sum_{^{n \geq 1}} \varepsilon_n = \infty ) \geq 1/C_3 >0 $. As observed in Remark \ref{utile}, $r\rightarrow L^*_r (a)$ has independent increments. Thus, Kolmogorov's $0$-$1$ law applies and we get $ \bP( \sum_{^{n \geq 1}} \varepsilon_n = \infty)= 1 $. This entails 
$\bP ( \liminf_n L^*_{2^{-n}} (a)/g(2^{-n}) \leq 1 ) = 1$. 
Observe that (\ref{infinih}) is also satisfied by $c.h$ for arbitrarily small $c>0$. This easily implies $\bP ( \liminf_n L^*_{2^{-n}} (a)/g(2^{-n}) =0 ) = 1$ and (\ref{keyloc}) entails 
$$ N_\gamma \left( \int_{\cT} \!\!\! \ell^a(d\sigma) \un_{ \{ \liminf_{n}  \ell^a (B(\sigma, 2^{-n})) / g(2^{-n})   >0       \}} \,   \right)=  0 \; . $$
This proves (\ref{lowdensnull}) in Theorem \ref{packlevel} $(ii)$. Furthermore, if $g$ is a regular gauge function, then, (\ref{lowdensnull}) and Lemma \ref{genedens} $(i)$ entail that $N_\gamma$-a.e.$\, \cP_g (\cT (a) \, )= \infty$, on $\{ \cT (a) \neq \emptyset \}$, which completes the proof of Theorem \ref{packlevel}. \qed 

\subsection{Proof of Proposition \ref{Htestlv}.}
\label{proofHtestlv}
Fix $a >0$ and let $g$ be as in Proposition \ref{Htestlv}. Namely $g: (0,1) \rightarrow (0, \infty)$ is such that $\lim_{0+} r^{_{-1/(\gamma -1)}} g(r)= \infty$. To simplify notation we set $h(r)= r^{_{-1/(\gamma -1)}} g(r)$. Although Proposition \ref{Htestlv} $(i)$ is already proved in \cite{DuLG3} we provide a brief proof of it:  We assume that 
\begin{equation}
\label{fifinini}
 \sum_{n \geq 1} h (2^{-n})^{-(\gamma-1)}  < \infty \; .
 \end{equation}
The scaling property stated in Lemma \ref{Lstar} and Proposition \ref{levelestimates} $(i)$ imply 
that for all sufficiently large $n$, 
$$ \bP ( L^*_{2^{-n}} (a) \geq g (2^{-n})  )=  \bP( Z_\gamma \geq h(2^{-n})) \sim_{n \rightarrow \infty} \frac{\gamma}{2\Gamma (2-\gamma)} h(2^{-n})^{-(\gamma -1)} \; .$$ 
Borel-Cantelli entails $\bP( \limsup_{n \rightarrow \infty} L^*_{2^{-n}} (a) /g(2^{-n}) \leq  1 )$. Since (\ref{fifinini}) is satisfied by $K.h$ for arbitrarily large $K$, we easily get  $\bP( \limsup_{n \rightarrow \infty} L^*_{2^{-n}} (a) /g(2^{-n}) =0 )= 1$ and (\ref{keyloc}) entails 
$$ N_\gamma \left( \int_{\cT} \!\!\! \ell^a(d\sigma) \un_{ \{ \limsup_{n}  \ell^a (B(\sigma, 2^{-n})) / g(2^{-n})   >0       \}} \,   \right)=  0 \; . $$
This proves (\ref{uppdensnull}) in Proposition \ref{Htestlv} $(i)$. Furthermore, if $g$ is a regular gauge function, then, (\ref{uppdensnull}) and Lemma \ref{genedens} $(iii)$ entail that $N_\gamma$-a.e.$\, \cH_g (\cT (a) \, )= \infty$, on $\{ \cT (a) \neq \emptyset \}$, which completes the proof of Proposition \ref{Htestlv} $(i)$. 

\medskip

Let us prove (\ref{uppdensinfty}) in Proposition \ref{Htestlv} $(ii)$. We now assume 
\begin{equation}
\label{ininifif}
\sum_{n \geq 1} h (2^{-n})^{-(\gamma-1)}  = \infty \; .
\end{equation}
For any $n \geq 1$, set  $ \varepsilon_n = \un_{\{ \Lambda_{2^{-n-1}, 2^{-n}} (a ) \;  \geq \; g (2^{-n})   \}} $. The scaling property in Lemma \ref{Lambstar} and Proposition \ref{levelestimates} $(i)$ imply 
$$ \bE [\varepsilon_n ] \sim_{n \rightarrow \infty}  \frac{\gamma}{4\Gamma (2-\gamma)} h(2^{-n})^{-(\gamma -1)} \; .$$ 
Therefore $ \sum_{n \geq 1} \bE [\varepsilon_n ] = \infty$. The independence property stated in Lemma 
\ref{couronne} shows that the $\varepsilon_n$'s are independent. The converse of Borel-Cantelli  implies $ \bP (\sum_{^{n \geq 1}} \varepsilon_n = \infty)= 1$. 
As noticed in Remark \ref{utile}, we have $ \varepsilon_n \leq \un_{\{ L^*_{2^{-n}} (a) \geq g (2^{-n}) \} } $.  
Consequently, $\bP( \limsup_{n \rightarrow \infty } L^*_{2^{-n}} (a)/  g (2^{-n}) \geq 1)= 1$. 
Since (\ref{ininifif}) is satisfies by $c.h$ for arbitrarily small $c>0$, we easily get $\bP( \limsup_{n \rightarrow \infty } L^*_{2^{-n}} (a)/  g (2^{-n})= \infty)= 1$ and (\ref{keyloc}) entails 
$$ N_\gamma \left( \int_{\cT} \!\!\! \ell^a(d\sigma) \un_{ \{ \limsup_{n}  \ell^a (B(\sigma, 2^{-n})) / g(2^{-n})   < \infty      \}} \,   \right)=  0 \; ,$$
which proves (\ref{uppdensinfty}) in Proposition \ref{Htestlv} $(ii)$. \qed 

\subsection{Proof of Proposition \ref{Htestmass}.}
\label{proofHtestmass}
Let $g$ be as in Proposition \ref{Htestmass}. Namely $\lim_{0+} r^{_{-\gamma/(\gamma -1)}} g(r)= \infty$. To simplify notation we set $h(r)= r^{_{-\gamma/(\gamma -1)}} g(r)$. Although Proposition \ref{Htestmass} $(i)$ is already proved in \cite{DuLG3} we provide a brief proof of it. We assume that 
\begin{equation}
\label{cestfini}
 \sum_{n \geq 1} h (2^{-n})^{-(\gamma-1)}  < \infty \; .
 \end{equation}
Let us fix $a >0$. The scaling property stated in Lemma \ref{Mstar} and Proposition \ref{massup} $(i)$  imply that 
$$ \bP (  M^*_{2^{-n}} (a) \geq g (2^{-n})  )=  \bP( Y_\gamma \geq h(2^{-n})) \sim_{n \rightarrow \infty} \frac{h(2^{-n})^{-(\gamma -1)} }{\Gamma (2-\gamma)}\; .$$ 
Borel-Cantelli implies $\bP( \limsup_{n} M^*_{2^{-n}} (a) /  g (2^{-n})  \!\leq \! 1) \!= \!1 $. Since (\ref{cestfini}) is satisfied by $K.h$ for arbitrarily large $K$, we easily get $\bP( \limsup_{n \rightarrow \infty} M^*_{2^{-n}} (a) /  g (2^{-n})  = 0) = 1 $. By (\ref{keymass}), for any $a>0$, we get 
$$ N_\gamma \left( \int_{\cT} \!\!\! \ell^a(d\sigma) \un_{ \{ \limsup_{n}  \bm (B(\sigma, 2^{-n})) / g(2^{-n}) >0 \}} \,   \right)=  0 \; .$$
Since $\bm = \int_0^\infty \ell^a$, this entails (\ref{massuppdensnull}) in Proposition \ref{Htestmass}. Furthermore, if $g$ is a regular gauge function, then, (\ref{massuppdensnull}) and Lemma \ref{genedens} $(iii)$ imply that $\cH_g (\cT  )= \infty$, $N_\gamma$-a.e.$\;$, which completes the proof of Proposition \ref{Htestmass} $(i)$.

\medskip

  Let us prove (\ref{massuppdensinfty}) in Proposition \ref{Htestmass} $(ii)$. We assume 
\begin{equation}
\label{cestpasfini}
\sum_{n \geq 1} h (2^{-n})^{-(\gamma-1)}  = \infty \; .
 \end{equation}
For any $n \geq 1$, we set $ \varepsilon_n = \un_{\{ Q_{2^{-n-1}, 2^{-n}} (a)  \;  \geq \; g (2^{-n})   \}} $. The scaling property stated in Lemma \ref{MQstar} and Proposition \ref{massup} $(i)$ entail 
$$ \bE [\varepsilon_n ] = \bP \big(  Y_\gamma \geq 2^{\frac{-\gamma }{\gamma -1}}   h(2^{-n}) \, \big) \sim_{n \rightarrow \infty}   \frac{2^{\gamma}h(2^{-n})^{-(\gamma -1)} }{\Gamma (2-\gamma)}\; , $$ 
Thus, $ \sum_{^{n \geq 1}} \bE [\varepsilon_n ] = \infty $. The independence property of Lemma \ref{shellific} $(i)$ implies that the $\varepsilon_n$'s are independent. Thus, $ \bP( \sum_{^{n \geq 1}} \varepsilon_n = \infty )= 1$, by the converse of Borel-Cantelli. Then, Remark \ref{massutile} entails $\varepsilon_n \leq \un_{\{ M^*_{ 2^{-n}}  (a)  \geq g (2^{-n}) \} }$, for any $n \geq 1$. 
Thus, $\bP( \limsup_{n \rightarrow \infty} M^*_{ 2^{-n}}  (a)  / g (2^{-n})  \geq  1)= 1 $. Since (\ref{cestpasfini}) is satisfied by $c.h$ for arbitrarily small $c>0$, we easily get $\bP( \limsup_{n \rightarrow \infty} M^*_{ 2^{-n}}  (a)  / g (2^{-n}) = \infty)= 1 $. By (\ref{keymass}), for any $a >0$, we get 
$$ N_\gamma \left( \int_{\cT} \!\!\! \ell^a(d\sigma) \un_{ \{ \limsup_{n}  \bm (B(\sigma, 2^{-n})) / g(2^{-n}) <\infty  \}} \,   \right)=  0 \; .$$
Since $\bm = \int_0^\infty \ell^a$, this entails (\ref{massuppdensinfty}) in Proposition \ref{Htestmass}.

\subsection{Proof of  Theorem \ref{noreguHauslv} and of Theorem \ref{noreguHausmass}.}
\label{proofnoHaus}

We fix $\gamma \in (1, 2)$ and we consider the $\gamma$-stable tree $(\cT, d, \rho)$ coded by the height process $(H_t, t \geq 0)$ under its excursion measure $N_\gamma$. Recall that $p$ stands for the canonical projection from $[0, \zeta]$ onto $\cT= [0, \zeta]/ \sim$. Recall that $\rho= p(0)$ stands for the root of $\cT$. We extend $p$ on $[0, \infty)$ by setting $p(t)= \rho $, for any $t \geq \zeta$. Let $0 \leq s \leq t$ and set $\cT_{s, t}= p([s, t])$, equipped with the distance $d$ on $\cT$. We set $\rho_{s,t}= p(r_0)$ where $r_0\in [s, t]$ is such that $H_{r_0}= \inf_{r\in [s, t]}H_r$. Observe that $(\cT_{s, t}, d, \rho_{s,t})$ is a compact rooted real tree that is isometric to the compact real tree coded by the process $H^{s, t}:=(H_{(s+r)\wedge t}, r \geq 0)$.

    Let $g: (0, 1) \rightarrow (0, \infty)$ be regular gauge function. We denote by $\cH_g$ the $g$-Hausdorff measure on $(\cT, d)$. Recall that $\cT$ is compact and note that any subset of $\cT$ is contained in a closed ball with the same diameter. In the definition of $\cH_g(\cT_{s,t})$, we may restrict our attention to finite coverings with closed balls with center of the form $p(r)$, with $r \in \bQ \cap [s, t]$ and with rational radius. This entails that $\cH_g (\cT_{s,t})$ is a measurable function of $H^{s,t}$. Similarly, for any $a \geq 0$, $\cH_g (\cT (a) \cap \cT_{s, t})$ is a measurable function of $H^{s,t}$. 

  Let us fix $a>0$. Recall the definition of $\tilde{H}^a$ that is the height process below $a$ and recall that $\cG_a$ is the sigma-field generated by $\tilde{H}^a$ augmented with the $N_\gamma$-negligible sets. We denote by $\cG_{a-}$ the sigma-field generated by $\bigcup_{^{b<a}} \cG_b$. It is easy to observe that $N_\gamma$-a.e.$\, \tilde{H}^a$ is the limit in $\bbD ([0, \infty), \bR)$ of $\tilde{H}^b$ when $b$ goes to $a$. Then, $\cG_{a-}= \cG_a$. Next observe that the rooted real tree coded by $\tilde{H}^a$ is isometric to $\bar{B} (\rho , a)= \{ \sigma \in \cT : d(\rho , \sigma) \leq a \}$. Thus, $\cH_g (\bar{B}(\rho, a)\, )$ and $\cH_g ( \cT( a) \, )$ are $\cG_a$-measurable $[0, \infty]$-valued random variables.

\subparagraph{Proof of Theorem \ref{noreguHauslv}.}

Let $g(r)= r^q s(r)$ where $q$ is nonnegative and where $s$ is slowly varying at $0$. Recall that we furthermore assume that $g$ is a regular gauge function. Recall that $N_\gamma$-a.e.$\;$on $\{ \cT(a) \neq \emptyset \}$, the Hausdorff dimension of $\cT(a)$ is $1/(\gamma-1)$. Thus, if $q > 1/(\gamma -1)$, then $\cH_g (\cT(a) \, )= 0$, $N_\gamma$-a.e. and if $q < 1/(\gamma -1)$, then $\cH_g (\cT(a) \, )= \infty$, $N_\gamma$-a.e.$\;$on $\{ \cT(a) \neq \emptyset \}$. 
{\it We then restrict our attention to the case $q= 1/(\gamma-1)$}.

\medskip

{\it The general idea of the proof of Theorem \ref{noreguHauslv}} is the following: if for a certain $a\in (0, \infty)$, we have $N_\gamma (\, 0< \cH_g ( \cT (a)) < \infty ) >0$, then we first prove that $ 0 <\cH_g ( \cT (a))< \infty $, $N_\gamma$-a.e.$\;$on $\{ \cT (a) \neq  \emptyset \}$. We next observe that $\cH_g (\cdot \cap \{ \cT(a) \neq  \emptyset \})$ behaves like $\ell^a$ with respect to the scaling property and the branching property and we prove it entails that $\cH_g (\cdot \cap \{ \cT(a) \neq  \emptyset \})= c_0\ell^a$, where $c_0\in (0, \infty)$. Finally, we get a contradiction thanks to the test stated in Proposition \ref{Htestlv}. 

\medskip

The proof is in several steps. We first discuss how $a \mapsto \cH_g ( \cT (a)) \in [0 , \infty]$ behaves with respect to the branching property. We agree on the convention $\exp (-\infty )= 0$. Then, for any $a, \lambda \in (0, \infty)$, it makes sense to set 
$$ \tilde{u} (a, \lambda)= N_\gamma \big(1-e^{-\lambda \cH_g (\cT(a) \, ) } \big) \; , $$
Recall from (\ref{vvvequa}) the definition of $v(a)$ and observe that 
$\tilde{u} (a, \lambda)\leq v(a) <\infty$. Let us fix $b \in (0, a )$. Recall that $(g_{^j}^{_b}, d_{^j}^{_b})$, $i \in \cI_b$ stands for the connected components of the open set $\{ t \geq 0: H_t >b \}$ and recall that for any $j \in \cI_b$, we denote by $H^{_{b, j}}$ the corresponding excursion of $H$ above $b$. We also set $\cT_{^j}^{_b}= p([g_{^j}^{_b}, d_{^j}^{_b}])$ and $\sigma_{^j}^{_b} = p(g_{^j}^{_b})$. Then, the subtree $(\cT_{^j}^{_b}, d, \sigma_{^j}^{_b})$ is isometric to the rooted compact real tree coded by the excursion $H^{_{b,j}}$. For any $n \geq 1$, we set $ L_n = \sum_{^{j \in \cI_b}} n \wedge \cH_g (\cT_{^j}^{_b} (a-b) \,) $, where $\cT_{^j}^{_b} (a-b):= \{ \sigma \in \cT_{^j}^{_b} : d(\sigma_{^j}^{_b} , \sigma)= a-b \}$. Note that $\cT_{^j}^{_b} (a-b)$ is the $(a-b)$-level set of $\cT_{^j}^{_b}$. Since $\cH_g (\cT_{^j}^{_b} (a-b) \,)$ is a measurable function of $H^{_{b,j}}$, the branching property (\ref{branchprop}) applies and for any $\lambda \in (0, \infty)$, we $N_\gamma^{_{_{(b)}}} $-a.s.$\;$get 
$$N_\gamma^{_{_{(b)}}} \big( e^{-\lambda L_n } \, \big| \, \cG_b \big)= e^{-L^b_\zeta  \tilde{u}_n (a-b, \lambda) } \; ,$$
where $ \tilde{u}_n (a-b, \lambda) = N_\gamma (1-\exp (-\lambda n \wedge \cH_g (\cT(a-b) \, ) ) \, )$. By monotone convergence, we get $\lim_n  \tilde{u}_n (a-b, \lambda) =  \tilde{u} (a-b, \lambda) $. Then, observe that  $\lim_n \uparrow L_n = \cH_g (\cT(a) \, )$. Thus, the conditional dominated convergence theorem implies that 
for any $\lambda \in (0, \infty)$, we $N_\gamma^{_{_{(b)}}} $-a.s.$\;$have  
\begin{equation}  
\label{branchHauslv}
N_\gamma^{_{_{(b)}}} \big( e^{-\lambda \cH_g (\cT(a) \, ) } \, \big| \, \cG_b \big)= e^{-L^b_\zeta  \tilde{u} (a-b, \lambda) } \; ,
\end{equation}
Since $L^{_b}_{^\zeta}= 0$, $N_\gamma$-a.e.$\;$on $\{ \sup H \leq b \}$, this entails 
\begin{equation}  
\label{Hausflot}
\tilde{u} (a, \lambda)=  N_\gamma \big(1-e^{-L^b_\zeta  \tilde{u} (a-b, \lambda) } \, \big)= u( b,\tilde{u} (a-b, \lambda) \, ) \; .
\end{equation}
Note that Theorem \ref{noreguHauslv} is implied by the two following claims. 
\begin{description}  
\item[{\bf (Claim 1)}] \hspace{-3mm} If there exists $a_0 \in (0, \infty)$ such that $N_\gamma (\cH_g (\cT(a_0) \, )    = \infty ) >0$, then for any $a \in (0, \infty)$, $N_\gamma$-a.e.$\, \cH_g (\cT(a) )  = \infty $, on $\{ \cT(a) \neq \emptyset \}  $.

\medskip

\item[{\bf (Claim 2)}]  \hspace{-3mm} For any $ a\in (0, \infty)$, $N_\gamma (0 < \cH_g (\cT (a)) < \infty) = 0$. 
\end{description}
We first prove (Claim 1). To that end, observe that $\tilde{u} (b, 0+)= \lim_{\lambda \rightarrow 0} \tilde{u} (b, \lambda)=N_\gamma (\cH (\cT(b)  ) = \infty)$ for any $b \in (0, \infty)$.  Then, (\ref{Hausflot}) entails 
\begin{equation}
\label{Hausflotnull}
\tilde{u} (a, 0+)= u( a-b,\tilde{u} (b, 0+) \, ) \, ,  \quad a>b >0 \; .
\end{equation}
Let us now recall the scaling property of $\cT$: Let  $c \in (0, \infty)$. The "law" of $(\cT, cd, \rho)$ under 
$N_\gamma$ is the "law" of $(\cT, d, \rho)$ under $c^{_{1/(\gamma-1)}}N_\gamma$. We next denote by $\cH_{g, cd}$ the $g$-Hausdorff measure on $(\cT, cd, \rho)$ and we set $g_c (r)= g(cr)$, for any $r \in (0, 1)$. Then, for any $b >0$, we easily get 
\begin{eqnarray*}
\cH_{g, c\, d} \big( \{ \sigma\in \cT : c\, d(\rho, \sigma)= b\}  \big) &= & \cH_{g_c} \big(  \{ \sigma\in \cT : d(\rho, \sigma)= b/c\}  \big) \\
&= & c^{\frac{1}{\gamma-1}}\cH_{g} \big(\cT (b/c) \,  \big), 
\end{eqnarray*}
since $g$ is regularly varying at $0$ with exponent $1/(\gamma-1)$. The scaling property for $\cT$ implies that the law of $c^{_{1/(\gamma-1)}}\cH_{g} (\cT (b/c) )$ under $N_\gamma$ is the same as the law of $\cH_{g} (\cT (b) )$ under $c^{_{1/(\gamma-1)}}N_\gamma$. Thus $\tilde{u} (b, 0+)= b^{_{-1/(\gamma-1)}}\tilde{u} (1, 0+)$, for any $b>0$. If there exists $a_0 >0$ such that $N_\gamma (\cH_g (\cT(a_0)  )    = \infty ) >0$, then $\tilde{u} (1, 0+) >0$, $\lim_{b \rightarrow 0} \tilde{u} (b, 0+)= \infty$. Recall that 
$$ v(a)= N_\gamma ( \sup H >a) = N_\gamma ( L^a_\zeta >0)= N_\gamma ( \cT (a) \neq \emptyset ) = \lim_{\mu \rightarrow \infty} u(a, \mu) \; , $$
where $u$ is given by (\ref{explisolbranch}). Then, (\ref{Hausflotnull}) and the previous 
arguments easily imply that for any $a \in (0, \infty)$, $\tilde{u} (a, 0+)= \lim_{b \rightarrow 0}  u( a-b,\tilde{u} (b, 0+) \, )= v(a)$. Namely, $N_\gamma (\cH (\cT(a)  ) = \infty) = N_\gamma (\cT( a) \neq \emptyset )$. Since $\{\cH (\cT(a)  ) = \infty \} \subset \{ \cT( a) \neq \emptyset \}$, it implies that 
$N_\gamma$-a.e.$\, \cH_g (\cT(a) \, )  = \infty $, on $\{ \cT(a) \neq \emptyset \} $, which proves the first claim.

\medskip

To prove (Claim 2), we argue by contradiction and we suppose that there exists $a_0 \in (0, \infty)$ such that 
\begin{equation}
\label{hypabsulv}
N_\gamma ( 0 < \cH_g (\cT(a_0) \, )< \infty) >0 \; .
\end{equation}
The previous arguments show that $\tilde{u} (b, 0+)=N_\gamma (\cH (\cT(b) \, ) = \infty) = 0$, for any $b \in (0, \infty)$. The scaling property discussed above entails 
\begin{equation}
\label{scaleHauslv}
\tilde{u} (b, \lambda)= c^{-\frac{1}{\gamma-1}} \tilde{u} \big(b/c \, , \,  c^{\frac{1}{\gamma-1}}\lambda \big) \; , \quad b, \lambda , c >0 \; .
\end{equation}
We first claim that for any $a \in (0, \infty)$, 
\begin{equation}
\label{posilv}
N_\gamma(\cH (\cT(a) \, )  >0) = N_\gamma (\sup H >a) = v(a) .
\end{equation}
Indeed, observe that $\tilde{v} (b):= \lim_{^{\lambda \rightarrow \infty}} \uparrow  \tilde{u} (b, \lambda)=N_\gamma (\cH (\cT(b) \, ) >0) $. Then, (\ref{scaleHauslv}) implies $\tilde{v} (b)= b^{_{-1/(\gamma-1)}}\tilde{v} (1)$. Assumption (\ref{hypabsulv}) entails that $ 0 < \tilde{u} (a_0, \lambda) \leq \tilde{v} (a_0)$. Thus, we get $\tilde{v} (1) >0$, which implies $\lim_{^{b \rightarrow 0}} \tilde{v} (b)= \infty$. Thanks to (\ref{Hausflot}), we get $\tilde{v} (a)= u( a-b,\tilde{v} (b) \, )$ and $\tilde{v} (a)= \lim_{^{b \rightarrow 0}}  u( a-b,\tilde{v} (b) \, )= v(a)$, which is (\ref{posilv}).

\medskip

 Recall that for any fixed $\lambda \in (0, \infty)$, $b \mapsto u(b, \lambda)$ is decreasing. Then, (\ref{Hausflot}) implies that $\tilde{u} (a, \lambda) \leq \tilde{u} (b, \lambda)$, for any $a >b >0$, and for any $\lambda >0$. Thus, it makes sense to set 
$\phi (\lambda)= \lim_{^{b\downarrow 0}} \uparrow \tilde{u} (b, \lambda) \in (0, \infty]$. Then (\ref{Hausflot}) entails $\tilde{u} (a, \lambda)= u(a, \phi(\lambda))$, for any $a, \lambda >0$, with the convention: $u(a, \infty)= v(a)$. Since $N_\gamma (\cH_g (\cT(a))= \infty) = 0$,  (\ref{posilv}) and the definition of $\tilde{u}$ imply $\tilde{u} (a, \lambda)< v(a)$. Consequently, $\phi (\lambda) \in (0,  \infty)$, for any $\lambda >0$. Next, observe that $u$ satisfies the same scaling property (\ref{scaleHauslv}) as $\tilde{u}$. Therefore,  $c^{_{1/(\gamma-1)}} \phi(\lambda)= \phi( c^{_{1/(\gamma-1)}} \lambda)$, for any $c, \lambda >0$. Namely, $\phi (\lambda)= c_0 \lambda$, where $c_0 := \phi (1) \in (0, \infty)$ and we have proved that 
\begin{equation}
\label{proporlv}
 \tilde{u} (b, \lambda)= u(b, c_0 \lambda) \; , \quad \lambda , b >0 \; .
 \end{equation}

We next prove that for any $a>b >0$, and for any $\lambda \geq 0$, 
\begin{equation}
\label{condilv}
N_\gamma^{_{_{(a)}}}  \,^{_{_-}}{\rm a.s.} \quad  N_\gamma^{_{_{(a)}}} \! \big( e^{-\lambda \cH_g (\cT(a) \, )}  \, \big| \, \cG_b \big)= N_\gamma^{_{_{(a)}}} \! \big( e^{-\lambda c_0 L^a_\zeta  }  \, \big| \, \cG_b \big)\; .
\end{equation}
{\it Proof of (\ref{condilv}):} by the branching property, we easily get  $N_\gamma^{_{_{(b)}}} \big( e^{-\lambda c_0L^a_\zeta  } \, \big| \, \cG_b \big)= e^{-L^b_\zeta  u (a-b, c_0\lambda) }$. Therefore, 
$$N_\gamma^{_{_{(b)}}} \big( e^{-\lambda \cH_g (\cT(a) \, ) } \, \big| \, \cG_b \big)= N_\gamma^{_{_{(b)}}} \big( e^{-\lambda c_0L^a_\zeta  } \, \big| \, \cG_b \big) . $$
Then, we get 
$N^{_{_{(b)}}}_\gamma (\un_{\{ \cH_g (\cT(a) \, ) = 0\}} | \cG_b)= N^{_{_{(b)}}}_\gamma (\un_{\{ L^a_\zeta= 0\}} | \cG_b)=e^{-L^b_\zeta  v (a-b) } $, by letting $\lambda $ go to $\infty$. Thus, $N_\gamma^{_{_{(b)}}}$-a.s. 
$$ N_\gamma^{_{_{(b)}}} \big( \un_{\{ \cH_g (\cT(a) \, ) >0\}} e^{-\lambda \cH_g (\cT(a) \, ) } \,  \big| \, \cG_b \big) = N_\gamma^{_{_{(b)}}} \big( \un_{\{ L^a_\zeta >0 \}} e^{-\lambda c_0L^a_\zeta  } \, \big| \, \cG_b \big)  
. $$
By (\ref{vvvequa}) and (\ref{posilv}), we have $\un_{\{ L^a_\zeta >0  \}}=\un_{\{ \cH_g (\cT(a) \, ) >0\}}=\un_{\{ \sup H >a\}}$, $N_\gamma$-a.e. Thus,  $N_\gamma^{_{_{(b)}}} $-a.s.$\; $we get 
$$N_\gamma^{_{_{(b)}}} \big( \un_{\{ \sup H >a \}} e^{-\lambda \cH_g (\cT(a) \, ) } \, \big| \, \cG_b \big)=
N_\gamma^{_{_{(b)}}} \big( \un_{\{ \sup H >a  \}} e^{-\lambda c_0L^a_\zeta  } \, \big| \, \cG_b \big) 
\; .$$
Recall that $N_\gamma^{_{_{(b)}}}\!= N_\gamma ( \cdot  \cap \{  \sup H \!>\!b\})/v(b)$ and note that $\{  \sup H \!> \!a\} \!\subset\! \{  \sup H \! >\!b\}$. Thus, for any positive $\cG_b$-measurable random variable $Y$, we get 
$$   N_\gamma \big( \un_{\{ \sup H >a \}} e^{-\lambda \cH_g (\cT(a) \, ) } Y  \big)
=N_\gamma\big( \un_{\{ \sup H >a \}} e^{-\lambda c_0L^a_\zeta } Y \big) \; , $$
which easily entails (\ref{condilv}). \qed

\medskip

  Recall that $L^{_a}_{^\zeta}$ and $\cH_g (\cT(a) \, )$ are $\cG_a$-measurable and recall that $\cG_{a-}= \cG_a$. By letting $b$ go to $a$ in (\ref{condilv}), we get $\cH_g (\cT(a) \, )=c_0 L^{_a}_{^\zeta}$, 
$N_\gamma^{_{_{(a)}}}$-a.s.$\; $which easily entails $\cH_g (\cT(a) \, )=c_0L^{_a}_{^\zeta} $, $N_\gamma$-a.e. Recall that $\ell^a(\cT(a) \, )= L^{_a}_{^\zeta}$. Thus, we have proved:
\begin{equation}
\label{coincidlv}
\forall a \in (0, \infty) \; , \quad N_\gamma \,^{_{_-}}{\rm a.e.} \quad \cH_g (\cT(a) \, )=c_0 \ell^a(\cT(a) \, ) \; .
\end{equation}

  We now prove the following. 
\begin{equation}
\label{equaloclv}
\quad N_\gamma \,^{_{_-}}{\rm a.e.} \quad \cH_g ( \, \cdot \, \cap \cT(a) \, )=c_0 \ell^a \; .
\end{equation}
{\it Proof of (\ref{equaloclv}):} let $a >b >0$. The branching property and (\ref{coincidlv}) easily imply that $N_\gamma$-a.e.$\;$for any $j \in \cI_{^b}$, we have $\cH_g (\cT_{^j}^{_b}(a-b) \, )=c_0 \ell^a(\cT_{^j}^{_b}(a-b) \, )$. For any $\sigma \in \cT(a)$ and any $r \geq 0$, we set $\bar{B}_a (\sigma , r)= \{ \sigma^\prime \in \cT(a): d( \sigma, \sigma^\prime) \leq r \}$. Note that $\bar{B}_a (\sigma , r)$ is the closed ball in $(\cT(a), d)$ with radius $r$ and center $\sigma$.

  Let us fix $\sigma \in \cT(a)$ and let us denote by $\tilde{\sigma} $ the unique point in 
$\lgeo \rho , \sigma \rgeo$ such that $d( \sigma , \tilde{\sigma})= a-b$. Observe that $\bar{B}_a (\sigma , 2(a-b))$ is the union of the $\cT_{^j}^{_b}(a-b)$ such that  $\sigma_{^j}^{_b}= \tilde{\sigma}$. Since the $\cT_{^j}^{_b}(a-b)$s are pairwise disjoints, we get 
$$ \forall a>b >0 , \; N_\gamma \,^{_{_-}}{\rm a.e.} \; \,   \forall \sigma \in \cT(a) , \; \cH( \bar{B}_a (\sigma , 2(a\!-\!b)) \, )= c_0 \ell^a (\bar{B}_a (\sigma , 2(a\!-\!b))\, ).$$
Consequently, there exists a Borel subset $A\subset \bbD ([0, \infty) , \bR)$ whose complementary set is $N_\gamma$-negligible and such that on $A$, one has $\cH_g (\bar{B}_a (\sigma , r))= c_0 \ell^a (\bar{B}_a (\sigma , r) ) <\infty$, for any $\sigma \in \cT(a)$ and any $r \in \bQ_+$, which easily entails (\ref{equaloclv}). \qed

\medskip

We have proved that (\ref{hypabsulv}) implies that there exists $c_0 \in (0, \infty)$ such that (\ref{equaloclv}) holds true. Let us furthermore assume that 
\begin{equation}
\label{testlvre}
\sum_{^{n \geq 1}} \frac{2^{-n}}{g(2^{-n})^{\gamma-1}} \; < \infty  \; . 
\end{equation}
Then, Proposition \ref{Htestlv} $(i)$ implies that $\cH_g (\cT(a))= \infty$,  
$N_\gamma$-a.e.$\;$on $\{ \cT(a) \neq \emptyset \}$, which contradict (\ref{equaloclv}) since 
$N_\gamma  (\ell^a(\cT(a)) = \infty)= 0$. Consequently (\ref{testlvre}) fails and Proposition \ref{Htestlv} $(ii)$ entails that $\ell^a(E)= \ell^a( \cT(a))$, where $E$ is a Borel subset of $\cT(a)$ such that $\limsup_{n} \ell^a(B( \sigma, 2^{-n}))/ g(2^{-n})= \infty$ for any $\sigma \in E$. By the comparison lemma for Hausdorf measures (Lemma \ref{genedens} $(iv)$), we get $\cH_g (E)= 0$, which contradicts (\ref{equaloclv}). This implies that (\ref{hypabsulv}) is false, which proves (Claim 2). This completes the proof of Theorem \ref{noreguHauslv}. \qed

\subparagraph{Proof of Theorem \ref{noreguHausmass}.}  
Let $g(r)= r^q s(r)$ where $q$ is nonnegative and $s$ is slowly varying at $0$. Recall that we furthermore assume that $g$ is a regular gauge function. Recall that $N_\gamma$-a.e.$\;$the Hausdorff dimension of $\cT$ is $\gamma/(\gamma-1)$. Thus, if $q > \gamma/(\gamma -1)$, then $\cH_g (\cT  )= 0$, $N_\gamma$-a.e.$\;$and if $q < \gamma/(\gamma -1)$, then $\cH_g (\cT  )= \infty$, $N_\gamma$-a.e. {\it We next restrict our attention to the case $q= \gamma /(\gamma-1)$}.

\medskip

{\it The general idea of the proof of Theorem \ref{noreguHausmass}} is the following: if $N_\gamma (\, 0< \cH_g ( \cT ) < \infty ) >0$, then we first prove that $ 0 <\cH_g ( \cT)< \infty $, $N_\gamma$-a.e. We next observe that $\cH_g $ behaves like $\bm$ with respect to the scaling property and the branching property and we prove it entails that $\cH_g = c_0 \bm$, where $c_0\in (0, \infty)$. Finally, we get a contradiction thanks to the test stated in Proposition \ref{Htestmass}. 

\medskip

We first need to state two preliminary results. We agree on the convention $\exp (-\infty )= 0$ and for any $a, \lambda >0$ and any $\mu \geq 0$, we set 
$$ \tilde{\kappa}_a (\lambda, \mu)= N_\gamma \big(  1-e^{-\lambda \cH_g (\bar{B} (\rho, a) \, ) -\mu L^a_\zeta       } \,  \big) \; \in [0, \infty] \; .$$
Let us fix $b \in (0, a )$. Recall that $(g_{^j}^{_b}, d_{^j}^{_b})$, $i \in \cI_b$ stands for the connected components of the open set $\{ t \geq 0: H_t >b \}$ and recall that for any $j \in \cI_b$, we denote by $H^{_{b, j}}$ the corresponding excursion of $H$ above $b$. We also set $\cT_{^j}^{_b}= p([g_{^j}^{_b}, d_{^j}^{_b}])$ and $\sigma_{^j}^{_b} = p(g_{^j}^{_b})$. Then, the subtree $(\cT_{^j}^{_b}, d, \sigma_{^j}^{_b})$ is isometric to the rooted compact real tree coded by the excursion $H^{_{b,j}}$. We set $\cT_{^j}^{_b} (\cdot \leq a-b)= \{ \sigma \in \cT_{^j}^{_b} : d(\sigma_{^j}^{_b}, \sigma) \leq a-b \}$ that is the closed ball of $(\cT_{^j}^{_b} , d, \sigma_{^j}^{_b})$ with center $\sigma_{^j}^{_b}$ and radius $a-b$. For any integer $n \geq 1$,  for any $\lambda \in (0, \infty)$ and for any $\mu \in [0, \infty)$, we then set 
\begin{eqnarray*}
 K_n = n \wedge \cH_g (\bar{B} (\rho, b) \,)  &+ &  \lambda \sum_{^{j \in \cI_b}} \un_{ \{ \sup H^{b, j}   > 1/n \}}  \, n \wedge \cH_g (\cT^b_j (\cdot \leq a-b) \, ) \;  \\
&+ &  \mu \sum_{^{j \in \cI_b}} \un_{ \{ \sup H^{b, j}   > 1/n \}}  \,  L^{a-b}_{\zeta_j } (j)  \; .
  \end{eqnarray*}
Here $L^{_{ a-b}}_{^{\zeta_j }} (j)$ stands for the local time of $H^{j,b}$ at level $a-b$. Since $H$ is continuous, the sum only contains a finite number of non-zero terms. Recall that 
since $\cH_g ( \cT_{^j}^{_b} (\cdot \leq a-b))$ is a measurable function of $H^{_{b,j}}$, the branching property (\ref{branchprop}) implies that for any $\lambda  \in (0, \infty)$, 
$$N_\gamma^{_{_{(b)}}} \,^{_{_{-}}}{\rm a.e.} \quad N_\gamma^{_{_{(b)}} } \big( e^{- K_n} \, \big| \, \cG_b \big)= 
e^{ -\lambda . \,  n\wedge \cH_g (\bar{B} (\rho , b) \, ) -L^b_\zeta  \tilde{\kappa}^{(n)}_{a-b}( \lambda , \mu) } \; ,$$
where 
$$\tilde{\kappa}^{(n)}_{a-b}( \lambda , 0)=N_\gamma \big[  \un_{\{ \sup H > 1/n\}} ( 1-e^{-\lambda . \, n \wedge \cH_g (\bar{B} (\rho, a-b)  ) -\mu L^{a-b}_\zeta}\,  \big) \big]  \; .$$
Monotone convergence implies $\lim_{n} \uparrow   \tilde{\kappa}^{_{(n)}}_{^{a-b}}( \lambda , 0)  =  \tilde{\kappa}_{^{a-b}}( \lambda , 0)  $. Then note that $N_\gamma$-a.e. 
$$\lim_{n} \uparrow K_{n} = \lambda \cH_g (\bar{B} (\rho, a) \,) + \mu L^a_\zeta \; .$$
The conditional dominated convergence theorem implies that for any $\lambda  \in (0, \infty)$, any $\mu \in [0, \infty)$ and any $a>b >0$, we $N_\gamma^{_{_{(b)}}} $-a.s.$\;$have 
\begin{eqnarray}  
\label{branchHausmass}
\hspace{-7mm}N_\gamma^{_{_{(b)}} } \big( \un_{\{  \cH_g (\bar{B} (\rho , a) ) < \infty  \}}e^{-\lambda \cH_g (\bar{B} (\rho , a)  ) -\mu L^a_\zeta  }  \big|  \cG_b \big) & = &  \nonumber \\ 
& & \hspace{-20mm} \un_{\{  \cH_g (\bar{B} (\rho , b)  ) < \infty  \}}e^{ -\lambda \cH_g (\bar{B} (\rho , b)  ) -L^b_\zeta  \tilde{\kappa}_{a-b}( \lambda , \mu ) }.
\end{eqnarray}
Arguing as in the proof of (\ref{scaleHauslv}), the scaling property of $\cT$ and the fact that $g$ is regularly varying at $0$ with exponent $\gamma / (\gamma-1)$ imply that for any $c \in (0, \infty)$,  the joint law of $c^{_{\gamma/(\gamma-1)}} \cH_g (\bar{B} (\rho , b/c))$ and 
$c^{_{1/(\gamma-1)}} L^{_{b/c}}_{^\zeta} $under $N_\gamma $ is the same as the joint law of $ \cH_g (\bar{B} (\rho , b))$ and 
$ L^{_{b}}_{^\zeta} $ under $c^{_{1/(\gamma-1)}} N_\gamma$. This easily entails 
\begin{equation}
\label{scaleHausmass}
\tilde{\kappa}_b ( \lambda, \mu )= c^{-\frac{1}{\gamma-1}} \tilde{\kappa}_{b/c} \big(  c^{\frac{\gamma }{\gamma-1}}\lambda, c^{\frac{1}{\gamma-1}}\mu  \big) \; , \quad b, \lambda , c , \mu >0 \; .
\end{equation}

\medskip

Now observe that Theorem \ref{noreguHausmass} is implied by the two following claims. 
\begin{description}  
\item[{\bf (Claim 1)}]  \hspace{-3mm} If $\, N_\gamma (\cH_g (\cT) <\infty ) >0$, then $N_\gamma (\cH_g (\cT) =\infty ) =0$. 

\medskip

\item[{\bf (Claim 2)}]   \hspace{-3mm} $N_\gamma (0 < \cH_g (\cT) < \infty) = 0$. 
\end{description}
We first prove (Claim 1). Let us suppose that $N_\gamma (\cH_g (\cT) < \infty) >0$. Then, there exists $a_0 >b_0 >0$ such that $N_\gamma ( \cH_g (\bar{B} (\rho , a_0) < \infty \, ; \, \sup H > b_0 ) >0$. Next, observe that the left member in (\ref{branchHausmass}) with $a= a_0$ and $b= b_0$ is strictly positive,  which entails that $\tilde{\kappa}_{a_0-b_0}( \lambda , 0)  < \infty$, for any $\lambda \in (0, \infty)$,  since we $N^{_{_{(b)}}}_\gamma$-a.s.$\;$have $L^{_b}_{^\zeta} >0$. Therefore, we have  
$$N_\gamma (    \cH_g (\bar{B} (\rho , a_0-b_0) ) = \infty)  \leq \tilde{\kappa}_{^{a_0-b_0}}( \lambda , 0)  < \infty \; , \quad \lambda \in (0, \infty) \; .$$ 
The scaling property easily entails that 
$$N_\gamma (\cH_g (\bar{B} (\rho , b) )= \infty) = b^{_{-1/(\gamma-1)}}N_\gamma ( \cH_g (\bar{B} (\rho , 1) )= \infty) \; .$$
 Since $b \mapsto N_\gamma (\cH_g (\bar{B} (\rho , b) )= \infty )$ is obviously non-decreasing and finite at $b= a_0-b_0$, we get $N_\gamma (\cH_g (\bar{B} (\rho , b) )= \infty)= 0$ for any $b>0$. Consequently, 
$N_\gamma (\cH_g (\cT)= \infty) = 0$, since $\cT$ is bounded. This completes the proof of the first claim.

\medskip

We now prove (Claim 2). We argue by contradiction, so we suppose that 
\begin{equation}
\label{hypabsumass}
N_\gamma ( 0 < \cH_g (\cT\ )< \infty) >0 \; .
\end{equation}
First, observe that (Claim 1) entails that $N_\gamma (\cH_g (\bar{B} (\rho , a)) = \infty)= 0$ for any $a >0$. Since $\cT$ is bounded, we get 
\begin{equation}
\label{Hausfiniee}
N_\gamma \,^{_{_-}}{\rm a.e.} \quad \cH_g (\cT) < \infty \; .
\end{equation}
Then observe that for any $b \in (0, \infty)$ the left member in (\ref{branchHausmass}) is strictly positive for any $\lambda >0$. This entails $\tilde{\kappa}_{^{a-b}}( \lambda , 0)  < \infty$ for any $a>b>0$, $\lambda >0$, since we $N^{_{_{(b)}}}_\gamma$-a.s.$\;$have $L^{_b}_{^\zeta} >0$, as already mentionned. More simply, we have proved 
$$\tilde{\kappa}_{{a}}( \lambda , 0)  < \infty \; , \quad a, \lambda \in (0, \infty) \; .$$
Let $\mu >0$. Observe that 
\begin{equation}
\label{incrkappamu}
0 \leq \tilde{\kappa}_{a}( \lambda , \mu) -\tilde{\kappa}_{a}( \lambda , 0) \leq \mu N_\gamma (L^{_a}_{^\zeta})= \mu \; .
\end{equation}
This implies that  $ \tilde{\kappa}_{a}( \lambda , \mu) <\infty $. Moreover (\ref{hypabsumass}) implies that $\tilde{\kappa}_{a} (\lambda, \mu)  >0$. Thus, we have proved that 
$$ \tilde{\kappa}_{a}( \lambda , \mu) \, \in (0, \infty) \; , \quad a, \lambda >0 \; , \; \mu \geq 0 \; .$$
Since $N_\gamma (L^{_b}_{^\zeta} \neq 0)= N_\gamma (\sup H >b)$,
(\ref{branchHausmass}) easily entails 
\begin{equation}  
\label{Hausflotmass}
\tilde{\kappa}_a ( \lambda, \mu )=  N_\gamma \big(1- e^{ -\lambda \cH_g (\bar{B}(\rho , b) \, ) -L^b_\zeta  \tilde{\kappa}_{a-b}( \lambda , \mu) } \big)= \tilde{\kappa}_b( \lambda, \tilde{\kappa}_{a-b}( \lambda, \mu) \, ).
\end{equation}

  We next prove that for any $\lambda \in (0, \infty)$, there exists $\phi (\lambda) \in (0, \infty)$, such that for any $a >0$, we have  
\begin{equation}  
\label{ordrun}  
 \tilde{\kappa}_a ( \lambda, 0)= N_\gamma \big( 1-e^{-\lambda \cH_g (\bar{B} (\rho , a) \, ) }  \big) = \phi(\lambda) \int_0^a \!\! N_\gamma \big( e^{-\lambda \cH_g (\bar{B} (\rho , b)\, ) } L^b_\zeta \big) \, db  .
\end{equation}
{\it Proof of (\ref{ordrun}):} we first set  $U( \lambda , a)=N_\gamma \big( e^{-\lambda \cH_g (\bar{B} (\rho , a)\, ) } L^a_\zeta \big)$, for any $a, \lambda \in (0, \infty)$. Observe that (\ref{branchHausmass}), 
(\ref{Hausfiniee}) and (\ref{incrkappamu}) imply for any $a>b>0$, and any $\mu \in (0, \infty)$
\begin{eqnarray*}
N_\gamma^{_{_{(b)}} } \big(e^{-\lambda \cH_g (\bar{B} (\rho , a)  )} \frac{_1}{^{\mu}} \big( 1-e^{ -\mu L^a_\zeta  }  \big) \big|  \cG_b \big) & = &  \nonumber \\ 
& & \hspace{-25mm} e^{ -\lambda \cH_g (\bar{B} (\rho , b)  )  -L^b_\zeta  \tilde{\kappa}_{a-b}( \lambda , 0 )   }  \frac{_1}{^{\mu}}\big( 1-e^{ -L^b_\zeta  (\tilde{\kappa}_{a-b}( \lambda , \mu )-\tilde{\kappa}_{a-b}( \lambda , 0 ) \, )} \big)\\
& & \hspace{-29mm} \leq   e^{ -\lambda \cH_g (\bar{B} (\rho , b)  )}L^b_\zeta \;  \frac{_1}{^{\mu}}  (\tilde{\kappa}_{a-b}( \lambda , \mu )-\tilde{\kappa}_{a-b}( \lambda , 0 ) \, ) \\
& & \hspace{-29mm} \leq  e^{ -\lambda \cH_g (\bar{B} (\rho , b)  )}L^b_\zeta \;  .
\end{eqnarray*}
Observe that the following limit is non-decreasing: $\lim_{\mu \downarrow 0} \uparrow  
\frac{1}{\mu}\big( 1-e^{ -\mu L^a_\zeta  }  \big)= L^a_\zeta$. Conditional monotone convergence entails 
$$ N_\gamma^{_{_{(b)}} } \big(e^{-\lambda \cH_g (\bar{B} (\rho , a)  )}  L^a_\zeta  \big|  \cG_b \big)
 \leq e^{ -\lambda \cH_g (\bar{B} (\rho , b)  )}L^b_\zeta . $$
We integrate this inequality with respect to $ N_\gamma^{_{_{(b)}} }$. Since $\{ L^a_\zeta  >0 \} \subset \{ L^b_\zeta >0 \}$, we easily get for any $ a>b>0$, and any $\lambda \in (0, \infty)$, 
\begin{equation}
\label{decreasizebias}
 U(\lambda , a) =  N_\gamma \big(e^{-\lambda \cH_g (\bar{B} (\rho , a)  )}  L^a_\zeta   \big) \leq  N_\gamma \big(e^{-\lambda \cH_g (\bar{B} (\rho , b)  )}  L^b_\zeta   \big) = U(\lambda , b) , 
\end{equation}

  Next, let $b, h \in (0, \infty)$ and note that $\cH_g (\bar{B} (\rho , b+h)\, ) -\cH_g (\bar{B} (\rho , b)\, ) = 0$, on the event $\{ \cT(b) = \emptyset \}$. Since $N_\gamma (\cT (b) \neq \emptyset )= N_\gamma (\sup H >b)$, 
(\ref{branchHausmass}) and an elementary inequality entails 
\begin{eqnarray*}
N_\gamma \big( e^{-\lambda \cH_g (\bar{B} (\rho , b) ) } \! -e^{-\lambda \cH_g (\bar{B} (\rho , b+h)) } \big)  &= & N_\gamma \big( e^{-\lambda \cH_g (\bar{B} (\rho , b) ) } \big( 1\!-\!e^{-L^b_\zeta \tilde{\kappa}_h ( \lambda, 0)} \big) \, \big) \\
 & \leq  & \tilde{\kappa}_h ( \lambda, 0)  
\end{eqnarray*}
(here we use the fact $N_\gamma (L^{_b}_{^\zeta})=  1$ in the last inequality). This implies 
\begin{equation}
\label{telescope}
 \tilde{\kappa}_a ( \lambda, 0)=  \tilde{\kappa}_{\frac{1}{n}} (\lambda, 0) + n \,    \tilde{\kappa}_{\frac{1}{n}} (\lambda, 0)\int_0^{\frac{\lfloor na \rfloor -1}{n} } \!\!\!\!\! G_n( b, \lambda) db + R_n (a, \lambda) \; , 
\end{equation}
where we have set 
\begin{displaymath}
\left\{ \begin{array}{l}
G_n(b, \lambda)=(\tilde{\kappa}_{1/n} ( \lambda, 0)\, )^{-1}  N_\gamma \big( e^{-\lambda \cH_g (\bar{B} (\rho , \lceil nb \rceil/n) ) } \big( 1\!-\!e^{-L^{_{\lceil nb \rceil/n}}_{^\zeta} \tilde{\kappa}_{1/n} ( \lambda, 0)} \big) \, \big),  \\
R_n (a, \lambda)=     N_\gamma \big( e^{-\lambda \cH_g (\bar{B} (\rho , \lfloor na \rfloor/n) ) } \big( 1\!-\!e^{-L^{_{\lfloor na \rfloor/n}}_{^\zeta} \tilde{\kappa}_{\{ an\}/n} ( \lambda, 0)} \big) \, \big) ,
\end{array} \right.
  \end{displaymath}
where  $\lfloor \cdot \rfloor$ stands for the integer-part function, where $\lceil \cdot \rceil= \lfloor \cdot \rfloor+1$ and where $\{ an\}= na-\lfloor na \rfloor $ stands for the fractional part of $na$. 

    Observe that $N_\gamma$-a.e.$\;$we have $\lim_{h \rightarrow 0} \cH_g ( \bar{B} (\rho , h))= \cH_g( \{ \rho \})= 0$, since any Hausdorf measure is diffuse. Dominated convergence entails that $\lim_{h \rightarrow 0} \tilde{\kappa}_{h} (\lambda, 0)= 0$, for any $\lambda \in (0, \infty)$. 
Consequently, $\lim_n  \tilde{\kappa}_{1/n} (\lambda , 0)= 0$. Moreover, we get 
$$ R_n (a, \lambda) \leq \tilde{\kappa}_{\frac{_{\{ an\} }}{^{n}}} ( \lambda, 0) \;   N_\gamma \big( L^{_{\lfloor na \rfloor/n}}_{^\zeta} \big) =\tilde{\kappa}_{\frac{_{\{ an\} }}{^{n}}} ( \lambda, 0)  \underset{n \rightarrow \infty}{-\!\!\!-\!\!\! \!\longrightarrow} \; 0 . $$
Next, observe that 
$$G_n (b, \lambda) \leq  N_\gamma \big( e^{-\lambda \cH_g (\bar{B} (\rho , \lceil nb \rceil/n) ) } L^{_{\lceil nb \rceil/n}}_{^\zeta} \, \big)= U(\lambda , \frac{_{\lceil nb \rceil}}{^{n}} ) \; .$$
By (\ref{decreasizebias}), we get $G_n (b, \lambda) \leq U(\lambda , b)$. 
By (\ref{Hausfiniee}), $\cH_g$ is a finite measure. Then, $b \mapsto  \cH_g (\bar{B} (\rho , b)\, ) $ is right continuous. Recall that we work with a right-continuous modification of $b \mapsto L^{_b}_{^\zeta}$. Therefore, Fatou's Lemma implies 
$$ U (\lambda, b)= N_\gamma \big( e^{-\lambda \cH_g (\bar{B} (\rho , b)\, ) } L^b_\zeta \big)  \leq \liminf_n G(b , \lambda) \; ,$$
Thus, we get 
$$ \lim_n G_n (b, \lambda)= U(b, \lambda) = N_\gamma \big( e^{-\lambda \cH_g (\bar{B} (\rho , b)\, ) } L^b_\zeta \big) \; .$$
By dominated convergence, we get 
$$ \lim_n   \int_0^{\frac{\lfloor na \rfloor -1}{n}  } \!\!\!\!\! G_n( b, \lambda) db= \int_0^a \!\! N_\gamma \big( e^{-\lambda \cH_g (\bar{B} (\rho , b)\, ) } L^b_\zeta \big) \, db .$$
This limit combined with (\ref{telescope}) implies $\lim_n n   \tilde{\kappa}_{\frac{1}{n}} (\lambda, 0)=\phi(\lambda) \in (0, \infty)$, which implies (\ref{ordrun}).   \cq

\medskip

Next observe that (\ref{ordrun}) implies $(\partial \tilde{\kappa}_a / \partial a) (\lambda, 0)= \phi(\lambda) \partial (\tilde{\kappa}_a / \partial \mu) (\lambda, 0)$ and by the scaling property (\ref{scaleHausmass}) we easily get 
\begin{equation}
\label{philin}
\phi (\lambda)= c_0 \lambda \, , \quad {\rm with} \quad  c_0:= \phi(1) \in (0, \infty)\; .
\end{equation}  

\medskip

We next  prove that for any $b \in (0, \infty)$, 
\begin{equation}
\label{coincidmass}
N_\gamma\,^{_{_-}}{\rm a.e.}  \quad \cH_g ( \bar{B} (\rho, b)\, ) = \bm   ( \bar{B} (\rho, b)\, ) \; .
\end{equation}
{\it Proof of (\ref{coincidmass}):} we first prove this result for $b= 1$. To that end, we set 
$$ D_n (\lambda)=   1\!-\!e^{-\lambda \cH_g (\bar{B} (\rho , 1) ) } \!- \!S_n (\lambda) $$
where $S_n (\lambda)$ stands for 
$$S_n (\lambda)= \sum_{^{1 \leq k < n}}  e^{-\lambda \cH_g (\bar{B} (\rho , k/n) ) } \big(1-e^{-L^{k/n}_\zeta \tilde{\kappa}_{1/n} (\lambda, 0) } \big) . $$
We want to prove that $\lim_n N_\gamma (D_n (\lambda)^2)= 0$. To that end, observe that 
$$ D_n (\lambda)= \sum_{^{0 \leq k < n}} V_k \; , $$
where we have set 
\begin{displaymath}
\left\{ \begin{array}{l}
 V_k= e^{-\lambda \cH_g (\bar{B} (\rho , k/n) ) } \big(e^{-L^{k/n}_\zeta \tilde{\kappa}_{1/n} (\lambda, 0) } - 
 e^{-\lambda \cH_g ( C(k/n)\, ) } \big)\; , \quad 1 \leq k < n \; , \\
 V_0 =  1-e^{-\lambda \cH_g (\bar{B} (\rho , 1/n) ) } \; , 
\end{array} \right.
\end{displaymath}
with $C(k/n)= \bar{B} (\rho , \frac{{k+1}}{n}) ) \backslash \bar{B} (\rho , \frac{{k}}{n}) $. Let $1 \leq k \leq n-1$. Observe that $\cT( k /n)= \emptyset$, $N_\gamma$-a.e.$\;$on $\{ \sup H \leq k/n\}$. Thus, $V_k= 0$, $N_\gamma$-a.e.$\;$on $\{  \sup H \leq k/n \}$. This easily implies that 
$$ \forall \, 0 < k < n \, , \quad | V_k| \leq 2 \cdot  \un_{\{ \sup H > k/n \}} \; .$$
Since $N_\gamma ( \sup H >k/n)= v(k/n) < \infty$, we have $N_\gamma (|V_k V_\ell | )< \infty$, for any $0 \leq k < \ell <n$ and $N_\gamma (V_k^2) < \infty$, for any $1 \leq k <n$. Next observe that $V_0^2 \leq V_0$. Thus $N_\gamma (V_0^2 )\leq N_\gamma (V_0)= \tilde{\kappa}_{1/n} (\lambda, 0) < \infty$. 
These estimates justify the following: 
\begin{equation}
\label{devel}
N_\gamma (D_n (\lambda)^2)= 2 \sum_{^{0 \leq k <\ell <n }} N_\gamma  (V_kV_\ell) + \sum_{^{0 \leq k <n }} N_\gamma ( V_k^2) \; , 
\end{equation}
We first fix $0 \leq k < \ell < n$. Observe that $V_\ell $ is $N^{_{_{(\ell/n)}}}_\gamma$-integrable and (\ref{branchHausmass}) implies $N^{_{_{(\ell/n)}}}_\gamma ( V_\ell \, | \cG_{^{\ell/n}}) = 0$. Moreover, $V_k$ is $\cG_{^{(k+1)/n}}$-measurable. Therefore, it is $\cG_{^{\ell /n}}$-measurable. Since $N_{\gamma} ( \cT(\ell/n) \neq \emptyset )= N_\gamma ( \sup H > \ell/n) = v(\ell/n)$, we  get 
$$N_\gamma \big( V_kV_\ell  \big)=  v\big(\frac{_\ell}{^n} \big) \, N^{_{_{(\ell/n)}}}_\gamma \! \big(V_kV_\ell \big)= v\big(\frac{_\ell}{^n} \big)\, N^{_{_{(\ell/n)}}}_\gamma \! \big(V_k \, N^{_{^{\ell/n}}}_\gamma \!( V_\ell \, | \,  \cG_{\ell/n})\, \big)= 0 \; .$$
We next fix $1 \leq k < n$. An easy argument combined with (\ref{branchHausmass}) entails that for any $1 \leq k < n$, we have 
$$ N^{_{^{k /n}}}_\gamma \! \big(V^2_k \, | \, \cG_{k/n} \big)=  e^{-2\lambda \cH_g (\bar{B} (\rho , k/n) ) } \big(e^{-L^{k/n}_\zeta \tilde{\kappa}_{1/n} (2\lambda, 0) } - 
 e^{- 2L^{k/n}_\zeta \tilde{\kappa}_{1/n} (\lambda, 0) } \big) . $$
Note that $\lambda \mapsto \tilde{\kappa}_{a} (\lambda, 0)  $ is clearly concave. Thus, $ 2\tilde{\kappa}_{1/n} (\lambda, 0) -\tilde{\kappa}_{1/n} (2\lambda, 0)  $ is nonnegative, which implies 
$$ N^{_{_{(k/n)}}}_\gamma \! \big(V^2_k \, | \, \cG_{k/n} \big) \leq ( 2\tilde{\kappa}_{1/n} (\lambda, 0) -\tilde{\kappa}_{1/n} (2\lambda, 0) \, )L^{_{k/n}}_{^\zeta} .$$
This entails $N_\gamma (V_k^2) \leq 2\tilde{\kappa}_{1/n} (\lambda, 0) -\tilde{\kappa}_{1/n} (2\lambda, 0)$, since $N_\gamma (L^{_{k/n}}_{^\zeta})= 1 $. We also checks that $N_\gamma (V_0)= 2\tilde{\kappa}_{1/n} (\lambda, 0) -\tilde{\kappa}_{1/n} (2\lambda, 0)$. These bounds combined with (\ref{devel}) imply 
$$N_\gamma  (D_n (\lambda)^2) \leq 2n\tilde{\kappa}_{1/n} (\lambda, 0) -n\tilde{\kappa}_{1/n} (2\lambda, 0)\; .$$
When $n$ goes to $\infty$, the right member of the previous inequality tends to $2\phi(\lambda)-\phi( 2\lambda)$ that is null since $\phi$ is linear. Thus, 
$$ \forall \lambda \in (0, \infty) \; , \quad N_\gamma \big( D_n (\lambda)^2 \big) = 0 \; .$$
Fatou's lemma then entails $\liminf_n |D_n (\lambda)|= 0$ $N_\gamma$-a.e. Now observe that 
$$ S_n (\lambda)= \frac{_{n-1}}{^n} \, n \tilde{\kappa}_{\frac{_1}{^n}} (\lambda, 0)  \!\! \int_{1/n}^1 \!\!\! e^{-\lambda \cH_g (\bar{B} (\rho , \lfloor b/n \rfloor /n))} \frac{_{1}}{^{\tilde{\kappa}_{\frac{_1}{^n}} (\lambda, 0)}}  \big( 1-e^{L^{_{\lfloor b/n \rfloor /n}}_{^{\zeta}}   \tilde{\kappa}_{1/n} (\lambda, 0)  }\big) \, db . $$
Since $\lim_n n  \tilde{\kappa}_{1/n} (\lambda, 0) = \phi (\lambda)= c_0 \lambda$, we easily get 
$$ N_\gamma \,^{_{_-}}{\rm a.e.} \quad  \lim_n S_n (\lambda)= c_0 \lambda \int_0^1 e^{-\lambda \cH_g (\bar{B} (\rho , b))} L^b_\zeta \, db \; . $$
This implies that for any $\lambda \in (0, \infty)$, $N_\gamma$-a.e.
$$1-  e^{-\lambda \cH_g (\bar{B} (\rho , 1))}= c_0 \lambda \int_0^1 e^{-\lambda \cH_g (\bar{B} (\rho , b))} L^b_\zeta \, db \; . $$
Divide this equation by $\lambda$ and let $\lambda$ go to $0$, to get $\cH_g (\bar{B} (\rho , 1))= \int_{0}^{1} L^{_b}_{^\zeta} db$. Now recall that $\bm = \int_{0}^{\infty} \ell^b db$, which implies 
(\ref{coincidmass}) when $b= 1$.  

\smallskip

By the scaling property, the joint law of $\cH_g (\bar{B} (\rho , b))$ and $\bm (\bar{B} (\rho , b)) $ under $N_\gamma$ is the same as the joint law of 
$b^{_{\gamma/(\gamma-1)}}\cH_g (\bar{B} (\rho , 1))$ and $b^{_{\gamma/(\gamma-1)}} \bm (\bar{B} (\rho , 1)) $ under $b^{_{-1/(\gamma-1)}}N_\gamma$, which easily implies (\ref{coincidmass}) for any $b \in (0, \infty)$.  \cq 
 
\medskip

We next prove the following 
\begin{equation} 
 \label{equamass}
N_\gamma \,^{_{_-}}{\rm a.e.} \quad \cH_g = c_0 \,  \bm \; . 
\end{equation}
{\it Proof of (\ref{equamass}):} recall that $(g_{^j}^{_a}, d_{^j}^{_a})$, $j \in \cI_a$, stands for the excursion intervals of $H$ above $a$ and that $H^{_{a, j}}$ stands for the excursion correponding to  $(g_{^j}^{_a}, d_{^j}^{_a})$. Recall that we have set $\cT_{^j}^{_a}= p([g_{^j}^{_a}, d_{^j}^{_a}])$ and $\sigma_{^j}^{_a} = p(g_{^j}^{_a})$ so that the subtree $(\cT_{^j}^{_a}, d, \sigma_{^j}^{_a})$ is isometric to the rooted compact real tree coded by the excursion $H^{_{a,j}}$. For any $b \geq 0$, recall the notation 
$\cT_{^j}^{_a} (\cdot \leq b)= \{ \sigma \in \cT_{^j}^{_a}: d(\sigma_{^j}^{_a} , \sigma) \leq b) \}$ that is the closed ball in  $(\cT_{^j}^{_a}, d, \sigma_{^j}^{_a})$ with center $\sigma_{^j}^{_a}$ and radius $b$. Since $\cH_g (\cT_{^j}^{_a} (\cdot \leq b) \,)$ is a measurable function of $H^{_{a,j}}$, the branching property (\ref{branchprop}) and (\ref{coincidmass}) imply 
for any $a, b\geq 0$, 
\begin{equation}
\label{qzigh}
\forall \, a , b >0 , \;  N_\gamma \,^{_{_-}}{\rm a.e.} \; \forall j \in \cI_a , \quad \cH_g (  \cT^a_j ( \cdot \leq b) \, ) = c_0 \, \bm (  \cT^a_j (\cdot \leq b) \, ) . 
\end{equation}
Recall that $\bm ({\bf Sk} (\cT))= 0$, $N_\gamma$-a.e.$\;$(here ${\bf Sk} (\cT)$ stands for the skeleton of $\cT$). This result, combined with (\ref{qzigh}), shows that there exists a Borel set $A\subset \bbD ([0, \infty) , \bR)$ whose complementary set is 
$N_\gamma$-negligible and such that $\bm ({\bf Sk} (\cT))= 0$ and 
\begin{equation}
\label{Acontr}
\forall \,  a, b \in \bQ_+ \,  , \; \forall  j \in \cI_a \; , \quad \cH_g (  \cT^a_j ( \cdot \leq b) \, ) = c_0 \,  \bm (  \cT^a_j ( \cdot \leq b) \, )  
 \end{equation}
on $A$. {\it We now work deterministically on $A$}. Note that for any $a\in \bQ_+$, any $j \in \cI_a$ and any $b \in (0, \infty)$, we have 
$$ \cT_{^j}^{_a} ( \cdot \leq b)\, = \!\!\! \bigcap_{^{b^\prime \in \bQ_+: b^\prime \geq b}} \cT_{j}^{a} ( \cdot  \leq b^\prime) \; .$$
Since $\bm$ and $\cH_g$ are finite measures, (\ref{Acontr}) holds for any $b\in [0, \infty)$.

   Let $\sigma $ be a point in $\cT$ that is not a leaf. 
Then $\cT \backslash \{ \sigma \}$ has at least one connected component that does not contain the root $\rho$. Let us denote such a component by $\tilde{\cT}^o$. To ease the discussion, we call such a subset of $\cT$ an {\it open upper subtree}. Then, for any $b \in (0, \infty)$, we denote by $ \tilde{\cT}^o (\leq b)$ the set of $\sigma^\prime \in \tilde{\cT}^o$ such that $d(\sigma, \sigma^\prime) \leq b$. It is easy to prove that $ \tilde{\cT}^o (  \cdot \leq b)$ 
is the union of a non-decreasing sequence of subtrees of the form $ \cT^{_a}_{^j} ( \cdot \leq b^\prime)$, 
with $a\in \bQ_+$. This entails $\cH_g (\tilde{\cT}^o ( \cdot \leq b))= c_0 \,  \bm (\tilde{\cT}^o ( \cdot \leq b))$. 

 We next fix $\sigma \in \cT$ and $r >0$. We denote by $\cT^{_o}_{^j}$, $j \in \cJ$ the connected components 
of $\cT \backslash \lgeo \rho , \sigma \rgeo$. For any $j \in \cJ$, denote by $\sigma_j$ the unique point of $\lgeo \rho , \sigma \rgeo$ such that $\{ \sigma_j \} \cup \cT^{_o}_{^j}$ is the closure of $\cT^{_o}_{^j}$. 
Note that $\cT_j $ is a connected component of $\cT \backslash \{ \sigma_j \}$ that does not contains the root. Namely, $\cT^{_o}_{^j}$ is an open upper subtree. Moreover, observe that  
$$ \bar{B} (\rho , r) \backslash \lgeo \rho , \sigma \rgeo= \bigcup \big\{ \cT_j^o ( \,  \cdot \,   \leq\! r\!-\!d(\sigma, \sigma_j) \,  ) \, ;  \quad   j \in \cJ : 0 \leq d(\sigma, \sigma_j) < r  \big\} . $$
This implies $\cH(  \bar{B} (\rho , r) \backslash \lgeo \rho , \sigma \rgeo) = c_0 \, \bm (  \bar{B} (\rho , r) \backslash \lgeo \rho , \sigma \rgeo)$. Now note that $\lgeo \rho , \sigma \rgeo$ is isometric to a compact interval of the line. Thus, the Hausdorff dimension of $\lgeo \rho , \sigma \rgeo$ is $1$ (or $0$ if it reduces to $\{ \rho \}$). Therefore, $\cH_g (\lgeo \rho , \sigma \rgeo)= 0$, since $g$ is regularly varying at $0$ with exponent $\gamma/ (\gamma-1) >1$. Next observe that $\lgeo \rho , \sigma \lgeo \subset {\bf Sk} (\cT)$. Consequently, we get $\bm (\lgeo \rho , \sigma \rgeo)= 0$. We thus have proved that on $A$, $\cH_g$ and $ c_0 \bm$ are finite Borel measures on $\cT$ that agree on the set of all closed balls of $\cT$. This clearly implies (\ref{equamass}).  \cq

 \medskip

We have proved that (\ref{hypabsumass}) implies $\cH_g= c_0 \, \bm$ $N_\gamma$-a.e. We now argue as in the proof of Theorem \ref{noreguHauslv} to get a contradiction thanks to the test stated in Proposition \ref{Htestmass}. This proves that (\ref{hypabsumass}) is wrong, which entails (Claim 2). As already mentioned, it completes the proof of Theorem \ref{noreguHausmass}. \qed



\end{document}